\newcommand{\ra}[1]{\renewcommand{\arraystretch}{#1}}
\DeclareMathOperator{\Conv}{Conv}
\DeclareMathOperator{\clconv}{cl \ conv}
\DeclareMathOperator{\dom}{dom}
\DeclareMathOperator{\proj}{proj}
\providecommand{\revised}[1]{{\protect\color{black}{#1}}}
\providecommand{\rev}[1]{{\protect\color{black}{#1}}}
\newcommand{\ignore}[1]{}
\begin{document}

\title{Ideal formulations for  constrained convex optimization problems with indicator variables
}

\author{Linchuan Wei\thanks{Department of Industrial Engineering and Management Sciences,
	Northwestern University,
	Evanston, IL, USA. 
		Email: \texttt{LinchuanWei2022@u.northwestern.edu}} \and
        Andr\'es G\'omez\thanks{Department of Industrial and Systems Engineering, University of Southern California, Los Angeles, CA, USA.
        	Email: \texttt{gomezand@usc.edu}} \and
        Simge K{\"u}{\c{c}}{\"u}kyavuz\thanks{Department of Industrial Engineering and Management Sciences,
        	Northwestern University,
        	Evanston, IL, USA. 
        	Email: \texttt{simge@northwestern.edu}}
}

\maketitle

\begin{abstract}
Motivated by modern regression applications, in this paper, we study the convexification of a class of convex optimization problems with indicator variables and combinatorial constraints on the indicators. Unlike most of the previous work on convexification of sparse regression problems, we simultaneously consider the nonlinear non-separable objective, indicator variables, and combinatorial constraints. Specifically, we give the convex hull description of the epigraph of the composition of a one-dimensional convex function and an affine function under arbitrary combinatorial constraints. As special cases of this result, we derive ideal convexifications for problems with hierarchy, multi-collinearity, and sparsity constraints. Moreover, we also give a short proof that for a separable objective function, the perspective reformulation is ideal independent from the  constraints of the problem.  Our computational experiments with \revised {sparse} regression problems demonstrate the potential of the proposed approach in improving the relaxation quality without significant computational overhead.

\noindent {\bf Keywords:} Convexification, perspective formulation, Indicator variables, combinatorial constraints. 

\end{abstract}

\section{Introduction}
Given a set $Q\subseteq \{0,1\}^p$, a vector $h\in \R^p$ such that $h_i\ne 0$, for  all $i \in [p]\revised{:=\left\{1,\ldots,p\right\}}$, and a convex function $f:\R\to \R$, we study the set 
\begin{equation*}
Z_Q = \left\{(z, \beta, t) \in Q \times \mathbb R^{p} \times  \mathbb R ~ | ~  f\left(h^\top\beta\right) \leq t, \beta_i(1 - z_i) = 0, \forall i \in [p]\right\}.\end{equation*}
In set $Z_Q$ above, $z$ is a vector of indicator variables with $z_i=1$ if $\beta_i\neq 0$, and the set $Q$ encodes combinatorial constraints on the indicator variables. We assume without loss of generality that $f(0)=0$, since this assumption can always be satisfied after subtracting the constant term~$f(0)$.

The motivation to study $Z_Q$ stems from sparse regression problem:
Given a set of observations $(x_i,y_i)_{i=1}^n$ where $x_i\in \R^p$ are the features corresponding to observation $i$ and $y_i \in \R$ is its associated response variable, inference with a sparse linear model can be modeled as the optimization problem
\begin{subequations}\label{eq:constrainedRegression}
	\begin{align}
	\min_{z,\beta}\;& \sum_{i=1}^n f\left(y_i , x_i^\top \beta\right) +\lambda \rho(\beta)\label{eq:reg-obj}\\
	\text{s.t.}\;&\beta_i(1-z_i)=0,&i\in [p] \label{eq:reg-comp}\\
	& \beta\in \R^p,\; z\in Q\subseteq \{0,1\}^p, \label{eq:reg-comb}
	\end{align}
\end{subequations}
where $\beta$ is a vector of regression coefficients, $f$ is a loss function, $\lambda\geq0$ is a regularization parameter and $\rho$ is regularization function. Often, $f(\beta)=\left(y_i - x_i^\top \beta\right)^2$, in which case \eqref{eq:constrainedRegression} is referred to as sparse least squares regression, and typical choices of $\rho$ include $\ell_0$, $\ell_1$\revised{,} or $\ell_2$ regularizations. 

If $Q$ is defined via a $q$-sparsity constraint, $Q=\left\{z\in \{0,1\}^p~|~\sum_{i=1}^p z_i\leq q\right\}$, then problem \eqref{eq:constrainedRegression} reduces to the best subset selection problem \cite{miller2002subset}, a fundamental problem in statistics. Nonetheless, constraints other than the cardinality constraint arise in several statistical problems. \citet{bertsimas2016} suggest imposing constraints of the form $\sum_{i\in S}z_i\leq 1$ for some $S\subseteq [p]$ to prevent multicollinearity; \citet{carrizosa2020linear} use similar constraints to capture nested categorical variables. Constraints of the form $z_i\leq z_j$ can be used to impose strong hierarchy relationships, and constraints of the form $z_i\leq \sum_{j\in H\subseteq [p]}z_j$ can be used for weak hierarchy relationships \cite{bien2013lasso}. In group variable selection, indicator variables of regression coefficients of variables in the same group are linked, see \cite{huang2012selective}. \citet{MKS19} and \citet{KSMW20} impose that the indicator variables, which correspond to edges in an underlying graph, do not define cycles---a necessary constraint for inference problems with causal graphs. \citet{cozad2015combined} suggest imposing a variety of constraints in both the continuous and discrete variables to enforce priors from human experts. 

Problem \eqref{eq:constrainedRegression} is $\mathcal{NP}$-hard even for a $q$-sparsity constraint \cite{Natarajan1995}, and is often approximated with a convex surrogate such as lasso \cite{hastie2015statistical,tibshirani1996regression}. Solutions with better statistical properties than lasso can be obtained from non-convex continuous approximations \cite{fan2001variable,zhang2010nearly}. Alternatively, it is possible to solve \eqref{eq:constrainedRegression} to optimality via branch-and-bound methods \cite{bertsimas2016best,cozad2014learning}. In all cases, most of the approaches for \eqref{eq:constrainedRegression} have focused on the $q$-sparsity constraint (or its Lagrangian relaxation). For example, a standard technique to improve the relaxations of \eqref{eq:constrainedRegression} revolves around the use of the \emph{perspective reformulation}
\cite{akturk2009strong,ceria1999convex,Stubbs1999,dong2013valid,dong2015regularization,frangioni2006perspective,frangioni2007sdp,frangioni2011projected,frangioni2016approximated,gunluk2010perspective,hijazi2012mixed,wu2017quadratic,zheng2014improving}, an ideal formulation of a separable quadratic function with indicators (but no additional constraints). Recent work on obtaining ideal formulations for non-separable quadratic functions \cite{atamturk2018strong,atamturk2018sparse,atamturk2019rank,dong2013valid,frangioni2019decompositions,jeon2017quadratic} also ignores additional constraints in $Q$. 

There is a recent research thrust on studying constrained versions of \eqref{eq:constrainedRegression}. \citet{dong2019structural} study problem \eqref{eq:constrainedRegression} from a continuous optimization perspective (after projecting out the discrete variables), see also \cite{dong2019integer}. \citet{hazimeh2019learning} give specialized algorithms for the natural convex relaxation of~\eqref{eq:constrainedRegression} where $Q$ is defined via strong hierarchy constraints. Several results exist concerning the convexification of nonlinear optimization problems with constraints \cite{anstreicher2012convex,belotti2015conic,bienstock2014cutting,burer2017convexify,modaresi2016intersection,richard2010lifting,vielma2019small,kilincc2014two,wang2019tightness,wang2021tightness,wang2019generalized,Burer2009}, 
but such methods in general do not deliver ideal, compact or closed-form formulations for the specific case of problem \eqref{eq:constrainedRegression} with structured feasible regions. In a recent work closely related to the setting considered here, \citet{xie2018ccp} prove that the perspective formulation is \emph{ideal} if the objective is quadratic and separable, and $Q$ is defined by a $q$-sparsity constraint. In a similar vein, \citet{baccinew2019} show that the perspective reformulations for convex differentiable functions are tight for 1-sum compositions, and they use this result to show that they are ideal under unit commitment constraints. However, similar results for more general (non-separable) objective functions or constraints are currently not known.

\textbf{Our contributions.} In this paper, we provide a first study (from a convexification perspective) of the interplay between \emph{non-separable} convex  objectives and combinatorial constraints on the indicator variables. Specifically, we derive the convex hull description of $Z_Q$: the result is stated in terms of the convexification of the combinatorial set $Q$, but places no  assumptions on its form.  Using this result,  we develop  ideal formulations for settings in which the logical constraints on the indicator variables encode sparsity constraints or the so-called strong and weak hierarchy relations. In addition, we generalize the result in \cite{xie2018ccp} and \cite{baccinew2019} to arbitrary constraints on $z$ for \emph{separable} convex functions $f$, \revised{in our setting}. We show the computational benefit of the proposed approach on constrained regression problems with hierarchical relations. 

An earlier version of this work appeared in 
\cite{WGK20}, where we only considered separable and rank-one convex \emph{quadratic} functions, and sparsity and strong hierarchy constraints. Furthermore, in \cite{WGK20}, our proofs of the convexification results use the structure of each of the sets considered, whereas in the present paper, we give a unifying technique  that generalizes to any combinatorial set for functions that are not necessarily quadratic. Finally, here, we expand on our preliminary computational experiments in \cite{WGK20} with additional datasets, conduct a further analysis on the choices of the regularization parameters\revised{, and perform computations with sparse logistic regression}.

\textbf{Notation.} Given a  one-dimensional convex function $f:\R\to \R$, we adopt the convention that $0f(\beta/0)=\lim_{z\to 0^+}zf(\beta/z)$. Using this convention, the function $zf(\beta/z)$ for $z\geq 0$ is the closure of the perspective function of $f$, and is convex. Let $\mathbf 0$ and $\mathbf 1$ be  vectors of conformable dimension with all zeros and ones, respectively, and let $e_i$ denote the $i$th unit vector of appropriate dimension with 1 in the $i$th component and zeros elsewhere. For a set $Q$, we denote by $\conv(Q)$ its convex hull and by $\clconv(Q)$ the closure of its convex hull. Given two vectors $u,v$ of same dimensions, we let $u\circ v$ denote the Hadamard vector of $u$ and $v$, i.e., $(u\circ v)_i=u_iv_i$.

\section{Convexification of $Z_Q$}\label{sec:convexification}
Observe that in set $Z_Q$, the coefficients of $\beta$ can be scaled and negated if necessary to ensure $h_i=1$ for all $i\in [p]$. 
Therefore, in the derivation of ideal formulations in this section, we assume, without loss of generality, that 
\begin{equation*}
Z_Q = \left\{(z, \beta, t) \in Q \times \mathbb R^{p} \times  \mathbb R ~ | ~  f\left(\mathbf 1^\top  \beta\right) \leq t, \beta_i(1 - z_i) = 0, \forall i \in [p]\right\}.\end{equation*}
We also assume, without loss of generality, that for every $i\in [p]$ there exists $z\in Q$ such that $z_i=1$, as otherwise $z_i=\beta_i=0$ can be fixed and the corresponding variables can be removed.

For a given set $Q$, let $Q^0 = Q \setminus \{\mathbf 0\}$ or, equivalently, $Q^0 = \{ z \in Q~|~ \sum_{i=1}^p z_i \geq 1\}$. As we show in the subsequent discussion, the convexification of the set $Z_Q$ relies on the characterization of $\conv(Q^0)$. To this end, we first establish such a characterization. 

\begin{proposition}\label{prop:F}
The convex hull of $Q^0$ admits a description as \begin{equation}\label{eq:Q0}\conv(Q^0) = \conv(Q) \bigcap \{ z \; | \; \pi^\top z \geq 1, \; \forall\pi \in  \mathcal F\},\end{equation}
where $\mathcal F$ is a finite subset of $\R^p$. 
\end{proposition}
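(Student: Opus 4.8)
The plan is to treat the statement as a purely polyhedral fact. Since $Q$ is finite, $\conv(Q^0)$ and $\conv(Q)$ are polytopes, and the content of the claim is that the finitely many inequalities separating the larger polytope $\conv(Q)$ from the smaller polytope $\conv(Q^0)$ can all be put in the normalized form $\pi^\top z\ge 1$. I would begin by fixing a finite description $\conv(Q^0)=\bigcap_{l\in L}\{z:\alpha_l^\top z\le\beta_l\}$ by linear inequalities, which exists because a polytope has finitely many facets; if $\conv(Q^0)$ is not full-dimensional I add both orientations of each equation cutting out its affine hull to this system, so that a single family of $\le$-inequalities describes $\conv(Q^0)$ exactly. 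The standing assumption that each coordinate is active for some $z\in Q$ ensures $Q^0\ne\emptyset$, so this polytope is nonempty.

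Next I would sort these inequalities according to the sign of their right-hand side, exploiting that $Q\setminus Q^0\subseteq\{\mathbf 0\}$ by definition. Partition $L=L_A\cup L_B$ with $L_A=\{l:\beta_l\ge0\}$ and $L_B=\{l:\beta_l<0\}$. For $l\in L_B$, dividing $\alpha_l^\top z\le\beta_l$ through by the negative scalar $\beta_l$ reverses the inequality and produces $\pi_l^\top z\ge1$ with $\pi_l:=\alpha_l/\beta_l$. Setting $\mathcal F=\{\pi_l:l\in L_B\}$ gives a finite subset of $\R^p$ for which $\bigcap_{l\in L_B}\{z:\alpha_l^\top z\le\beta_l\}$ coincides with $\{z:\pi^\top z\ge1,\ \forall\pi\in\mathcal F\}$.

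The step I expect to do the real work is the lemma that every inequality indexed by $L_A$ is valid for all of $\conv(Q)$, not merely for $\conv(Q^0)$. Such an inequality holds on $Q^0$ because it is one of the defining inequalities of $\conv(Q^0)$, and it holds at $\mathbf 0$ because $\alpha_l^\top\mathbf 0=0\le\beta_l$ exactly when $\beta_l\ge0$; hence it is valid on $Q^0\cup\{\mathbf 0\}\supseteq Q$ and therefore on $\conv(Q)$. This yields the inclusion $\conv(Q)\subseteq\bigcap_{l\in L_A}\{z:\alpha_l^\top z\le\beta_l\}$, which is the key point: intersecting $\conv(Q)$ with the $L_B$-inequalities alone automatically enforces the $L_A$-inequalities as well.

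Given this, the two inclusions in \eqref{eq:Q0} fall out directly. For the forward inclusion, a point $z$ in the right-hand side lies in $\conv(Q)$, hence satisfies the $L_A$-inequalities by the lemma and the $L_B$-inequalities by membership in $\{z:\pi^\top z\ge1\}$, so it satisfies the entire system indexed by $L$ and thus lies in $\conv(Q^0)$. For the reverse inclusion, $\conv(Q^0)\subseteq\conv(Q)$ holds because $Q^0\subseteq Q$, and $\conv(Q^0)$ satisfies each constraint $\pi^\top z\ge1$ since these are among its own defining inequalities. The only subtleties are the non-full-dimensional case, absorbed into the equality-splitting in the first step, and the degenerate case $Q\subseteq\{\mathbf 0\}$, which is ruled out by $Q^0\ne\emptyset$.
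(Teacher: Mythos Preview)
Your proposal is correct and follows essentially the same approach as the paper: partition the defining inequalities of $\conv(Q^0)$ according to whether they are satisfied at the origin, observe that those satisfied at $\mathbf 0$ are already valid for $\conv(Q)$, and normalize the remaining ones to the form $\pi^\top z\ge 1$. Your write-up is in fact more careful than the paper's terse version, making both inclusions explicit and handling the non-full-dimensional case by splitting equalities into two inequalities.
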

\
\begin{proof}
	Let $\pi^\top z\geq \pi_0$ be an arbitrary valid inequality for $\conv(Q^0)$. If $\pi_0>0$, then $\frac{1}{\pi_0}\pi^\top z\geq 1$ is an equivalent inequality satisfying the conditions in \eqref{eq:Q0}. Otherwise, if $\pi_0\leq 0$, then the inequality does not cut off  $\mathbf 0$ and is thus valid for $Q$ and $\conv(Q)$. Therefore, it follows that $\conv(Q)\subseteq \{ z \; | \; \pi^\top z \geq \pi_0\}$, and inequality $\pi^\top  z\geq \pi_0$ is either already a facet of $\conv(Q)$, or is implied by the facets $\conv(Q)$. Finally, finiteness of $\mathcal F$ follows since $\conv(Q^0)$ is a polyhedron.~ 
\end{proof}
Note that if $\mathbf{0}\not\in Q$, then  $\mathcal F=\emptyset$. In practice, a set $\mathcal F$ of minimal cardinality is preferred. \revised{Since $\conv(Q)$ and $\conv(Q_0)$ may have an exponential number of facets, set $\mathcal F$ may be exponentially large as well.} \revised{In such cases, inequalities from $\mathcal F$ can be generated if violated in an iterative fashion, as is standard in a cutting plane algorithm.} \revised{Note that even if $\conv(Q)$ is simple, $\conv(Q_0)$ may contain an exponential number of facets. Nonetheless, in such cases, $\conv(Q_0)$ admits a compact extended formulation \cite{forbidden2015}, which in turn implies that separation of the inequalities in $\mathcal F$ can be done in polynomial time. }

Intuitively, one may think of $\mathcal F$ as the set of ``new" facets of $\conv(Q^0)$ that are not facets of $\conv(Q)$. If $\conv(Q)$ and $\conv(Q^0)$ have the same dimension, this intuition is correct. However, if the dimension of $\conv(Q^0)$ is less than the dimension of $\conv(Q)$, it may be the case that $\conv(Q^0)\subseteq \{z:\pi^\top z=1\}$ for some $\pi\in \mathcal F$, and thus this inequality is not a facet. For example, if $Q=\{0,1\}$, then $\conv(Q)=[0,1]$, $Q^0=\conv(Q^0)=\{1\}$ and $\mathcal F=\{1\}$, but the inequality $z\geq 1$ is not a facet of  the 0-dimensional polyhedron $\conv(Q^0)$.  

The description of $\clconv(Z_Q)$ depends on the structure of $Q$, and is critically dependent on whether the variables can be partitioned into multiple mutually exclusive components. We formalize this characteristic next.    

\begin{definition}\label{def:G}
	For $i,j\in[p], i\ne j$, define $i \sim j$ if there exists some $z \in Q$ such that $z_i=z_j =1$. Define the graph $G_Q =(V,E)$ where  $V=[p]$ and $\{i,j\}\in E$ if and only if $i\sim j$.  
\end{definition}

\subsection{The connected case}\label{sec:connected}

In this section, we provide ideal formulations in the original space of variables when graph $G_Q$ in Definition~\ref{def:G} is connected. This assumption is satisfied in most of the practical applications we consider, see \S\ref{sec:cases}. Later, in \S\ref{sec:disconnected}, we build upon the results of this section to derive ideal formulations when $G_Q$ is not necessarily connected.

Before we propose a class of valid inequalities for $Z_Q$, we give a lemma.

\begin{lemma}\label{lemma:perspective}
	For a one-dimensional proper convex function $f: \R \rightarrow \R$ with effective domain $\dom(f) = \R$, $f(0) = 0$ and its perspective $g(x,t) = t f(\frac{x}{t}) \; : \R^2 \rightarrow \R$, if $0 < t_1 \leq t_2$, then $g(x, t_1) \geq g(x, t_2)$ for all $x \in \R$. 
\end{lemma}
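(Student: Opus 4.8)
The plan is to fix $x \in \R$ and show directly that the map $t \mapsto g(x,t)$ is nonincreasing on $(0,\infty)$, using nothing beyond the convexity of $f$ and the normalization $f(0)=0$. The key observation is that, because $0 < t_1 \le t_2$, the point $x/t_2$ lies on the segment joining $0$ and $x/t_1$. Concretely, setting $\lambda := t_1/t_2 \in (0,1]$ gives $\lambda \cdot (x/t_1) + (1-\lambda)\cdot 0 = x/t_2$, and this identity holds regardless of the sign of $x$.

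First I would apply the convexity inequality of $f$ to this convex combination, obtaining $f(x/t_2) \le \lambda f(x/t_1) + (1-\lambda) f(0)$. Using $f(0)=0$ and $\lambda = t_1/t_2$, this collapses to $f(x/t_2) \le (t_1/t_2)\, f(x/t_1)$. Multiplying both sides by $t_2 > 0$ then yields $t_2 f(x/t_2) \le t_1 f(x/t_1)$, which is exactly $g(x,t_2) \le g(x,t_1)$, as claimed.

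I do not anticipate a genuine obstacle here. The only points to verify are that the convex-combination identity is legitimate, i.e.\ that $\lambda \in [0,1]$, which is immediate from $0 < t_1 \le t_2$, and that the full-domain hypothesis $\dom(f)=\R$ guarantees that $f(x/t_1)$, $f(x/t_2)$, and $f(0)$ are all finite, so the inequality manipulations are valid. An alternative route, available when $f$ is differentiable, is to differentiate $\phi(t):=t f(x/t)$ and use the subgradient inequality $f(0) \ge f(u) - u f'(u)$ with $u = x/t$ to conclude $\phi'(t) \le 0$; however, the convex-combination argument avoids any smoothness assumption and is considerably shorter, so that is the approach I would present.
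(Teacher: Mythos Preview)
Your argument is correct, and it takes a genuinely different and more elementary route than the paper's own proof. The paper fixes $t_1,t_2$ and studies $\phi(x)=g(x,t_1)-g(x,t_2)$ as a function of $x$: it invokes the existence and monotonicity of the one-sided derivatives $f_+',f_-'$ for convex $f$, computes $\phi_+'(x)=f_+'(x/t_1)-f_+'(x/t_2)\ge 0$ on $[0,\infty)$ (and symmetrically on the negative half-line), and then appeals to the fact that a continuous function with non-negative right-derivative is non-decreasing, together with $\phi(0)=0$. Your approach, by contrast, fixes $x$ and exploits the single convex-combination identity $x/t_2=\lambda\,(x/t_1)+(1-\lambda)\cdot 0$ with $\lambda=t_1/t_2$, so that one application of the defining inequality of convexity (plus $f(0)=0$) already yields $t_2 f(x/t_2)\le t_1 f(x/t_1)$. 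What your route buys is brevity and the avoidance of any differentiability or monotone-derivative machinery; the paper's route, while heavier, makes visible the sign structure of $\phi$ on each half-line, which is not needed here but could be informative in related arguments.
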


\begin{proof}
It suffices to show that the function $\phi(x) = g(x, t_1) - g(x, t_2)$ is non-decreasing in $[0, +\infty]$ and non-increasing in $[-\infty , 0]$. Since $\dom(f) = \R$, $f$ is continuous over $\R$ so is $\phi(x)$. Also, by convexity, we know that the right-derivative of $f(x)$ exists and is non-decreasing. Thus, $\phi_{+}^{'}(x) = f_{+}^{'}(\frac{x}{t_1}) - f_{+}^{'}(\frac{x}{t_2}) \geq 0 $ for all $x \in [0, +\infty]$. A continuous function with non-negative right-derivative is non-decreasing \cite{hardy2018course}. For $x \in [-\infty, 0]$, the left-derivative of $\phi$ is $\phi_{-}^{'}(x) = f_{-}^{'}(\frac{x}{t_1}) - f_{-}^{'}(\frac{x}{t_2}) \leq  0$, and similarly, $\phi(x)$ is non-increasing in $[-\infty , 0]$.
 
\end{proof}

\begin{proposition}
 The inequalities 
\begin{equation}\label{eq:valid}t \geq (\pi^\top z)f\left(\frac{\mathbf 1^\top  \beta}{\pi^\top z}\right),\; \; \forall \pi \in \mathcal F\end{equation}
are valid for $Z_Q$ for any finite set $\mathcal F\subseteq \R^p$ satisfying \eqref{eq:Q0}.   
\end{proposition}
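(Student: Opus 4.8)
The plan is to verify \eqref{eq:valid} pointwise: fix an arbitrary feasible point $(z,\beta,t)\in Z_Q$ and an arbitrary $\pi\in\mathcal F$, and show the inequality holds there. The driving observation is that the right-hand side of \eqref{eq:valid} is exactly the perspective $g(\mathbf 1^\top\beta,\pi^\top z)$ of $f$ evaluated at second coordinate $\pi^\top z$, while the defining constraint $f(\mathbf 1^\top\beta)\le t$ can be rewritten as $g(\mathbf 1^\top\beta,1)\le t$ using $g(x,1)=1\cdot f(x/1)=f(x)$. Hence it suffices to compare the perspective at second argument $1$ with its value at second argument $\pi^\top z$, which is precisely the monotonicity supplied by Lemma~\ref{lemma:perspective}.

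First I would split into two cases according to whether $z=\mathbf 0$. In the main case $z\neq\mathbf 0$ we have $z\in Q^0$, and since $\mathcal F$ satisfies \eqref{eq:Q0}, the inequality $\pi^\top z\ge 1$ holds; in particular $\pi^\top z>0$, so $g(\cdot,\pi^\top z)$ is well defined. Applying Lemma~\ref{lemma:perspective} with $t_1=1$ and $t_2=\pi^\top z\ge 1$ at the point $x=\mathbf 1^\top\beta$ yields $f(\mathbf 1^\top\beta)=g(\mathbf 1^\top\beta,1)\ge g(\mathbf 1^\top\beta,\pi^\top z)=(\pi^\top z)f\!\big((\mathbf 1^\top\beta)/(\pi^\top z)\big)$. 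Chaining this with $t\ge f(\mathbf 1^\top\beta)$ gives \eqref{eq:valid}.

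The remaining case $z=\mathbf 0$ is the only delicate point, and I expect it to be the main obstacle. Here the complementarity constraints $\beta_i(1-z_i)=0$ force $\beta=\mathbf 0$, so $\mathbf 1^\top\beta=0$, and also $\pi^\top z=0$. Using the convention $0f(\beta/0)=\lim_{s\to 0^+}sf(\beta/s)$, the right-hand side of \eqref{eq:valid} evaluates to $\lim_{s\to 0^+}sf(0/s)=\lim_{s\to 0^+}sf(0)=0$, while feasibility gives $t\ge f(\mathbf 1^\top\beta)=f(0)=0$; hence the inequality holds. The one hypothesis to check for applying Lemma~\ref{lemma:perspective} is that $f$ is a proper convex function with $\dom(f)=\R$ and $f(0)=0$; this is guaranteed because $f:\R\to\R$ is real valued everywhere (so proper with full domain) and the normalization $f(0)=0$ assumed at the outset matches the lemma's remaining hypothesis.
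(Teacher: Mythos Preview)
Your proof is correct and essentially identical to the paper's: both split into the cases $z\neq\mathbf 0$ and $z=\mathbf 0$, use $\pi^\top z\ge 1$ together with Lemma~\ref{lemma:perspective} in the first case, and handle the second case by observing that complementarity forces $\beta=\mathbf 0$ so both sides of \eqref{eq:valid} vanish. The only cosmetic difference is that the paper first disposes of the vacuous situation $\mathcal F=\emptyset$ (which occurs when $\mathbf 0\notin Q$), but this is immaterial.
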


\begin{proof}
First, observe that if  $\mathbf 0 \not \in Q$, then $\mathcal F=\emptyset$ and the statement is superfluous. Suppose, $\mathcal F\ne \emptyset.$ 
We consider two cases. If $z\ne \mathbf 0$, then we have $\pi^\top z\ge 1$ for $\pi\in \mathcal F$. Then, from Lemma~\ref{lemma:perspective},  $(\pi^\top z)f\left(\frac{\mathbf 1^\top  \beta}{\pi^\top z}\right)\le f\left(\mathbf 1^\top  \beta\right)\le t$. Hence the inequality is valid.  
Finally, if $z = \mathbf 0$, then $\beta=\mathbf 0$ in $Z_Q$. Therefore, $$t\ge f\left(\mathbf 1^\top  \beta\right) = f(0)=0=\lim_{\zeta\to 0^+}\zeta f\left(0/\zeta\right)=(\pi^\top z)f\left(\frac{\mathbf 1^\top  \beta}{\pi^\top z}\right),$$ and the inequality is valid.  
\end{proof}

We now describe the closure of the convex hull of $Z_Q$ under the assumption that graph $G_Q$ described in Definition~\ref{def:G} is connected. 
\begin{theorem}\label{theo:hullConnected}
	If the graph $G_Q$ given in Definition \ref{def:G} is connected, then 
	\begin{align}\clconv(Z_Q) = \Big\{(z,\beta,t) \in [0,1]^p \times \R^{p}&\times \R \; | \; z \in \conv(Q),\;  t \geq f(\mathbf 1^\top  \beta),\nonumber\\
	  &t \geq (\pi^\top z)f\left(\frac{\mathbf 1^\top  \beta}{\pi^\top z}\right),\; \; \forall \pi \in \mathcal F \Big\}\label{eq:defY}
	\end{align}
	for any finite set $\mathcal F\subseteq \R^p$ satisfying \eqref{eq:Q0}. 
\end{theorem}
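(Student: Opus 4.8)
The plan is to prove the two inclusions separately. The inclusion $\clconv(Z_Q)\subseteq R$, where $R$ denotes the set on the right-hand side of \eqref{eq:defY}, is the easy direction: the constraints $z\in\conv(Q)$ and $t\ge f(\mathbf 1^\top\beta)$ hold on $Z_Q$ by definition, the perspective inequalities \eqref{eq:valid} are valid for $Z_Q$ by the preceding proposition, and $R$ is closed and convex because each perspective term $(\pi^\top z)f(\mathbf 1^\top\beta/\pi^\top z)$ is, under the closure convention of the Notation paragraph, a closed convex function of $(z,\beta)$. Hence $R\supseteq\clconv(Z_Q)$.

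The substance is the reverse inclusion $R\subseteq\clconv(Z_Q)$. Fix $(\hat z,\hat\beta,\hat t)\in R$. The guiding observation is that the objective sees $\beta$ only through the scalar $\mathbf 1^\top\beta$, and that connectedness of $G_Q$ lets us redistribute the coordinatewise values of $\beta$ almost for free: if $i\sim j$ there is $z\in Q$ with $z_i=z_j=1$, and the point $(z,\,Me_i-Me_j,\,0)\in Z_Q$ (recall $f(0)=0$) shifts mass between coordinates $i$ and $j$ without changing $\mathbf 1^\top\beta$ or the objective; taking such a point with weight $\varepsilon\to 0$ and $M=\Theta(1/\varepsilon)$ moves $\hat\beta$ coordinatewise at vanishing cost in $z$ and $t$. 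I would therefore split the construction into two tasks: (i) choosing, for a suitable convex decomposition of $\hat z$, the \emph{scalar} component totals $s_k=\mathbf 1^\top\beta^k$ so that the induced objective value is at most $\hat t$; and (ii) realizing \emph{vectors} $\beta^k$ supported on the chosen atoms that reproduce $\hat\beta$ coordinatewise, which is exactly where connectedness enters.

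For task (i), I would use Proposition~\ref{prop:F} to write $\hat z=\Lambda\bar z+(1-\Lambda)\mathbf 0$ with $\bar z\in\conv(Q^0)$ and $\Lambda\in(0,1]$, decompose $\bar z=\sum_k\mu_k z^k$ with $z^k\in Q^0$, and assign every nonzero atom the common total $s_k=\mathbf 1^\top\hat\beta/\Lambda$ and the $\mathbf 0$-atom the total $0$; this respects $\sum_k\lambda_k s_k=\mathbf 1^\top\hat\beta$ and, by Jensen, makes the objective value $\Lambda f(\mathbf 1^\top\hat\beta/\Lambda)$. Since $\hat z=\Lambda\bar z$ forces $\Lambda\le\pi^\top\hat z$ for all $\pi\in\mathcal F$, and since Lemma~\ref{lemma:perspective} shows $\Lambda f(\mathbf 1^\top\hat\beta/\Lambda)$ is nonincreasing in $\Lambda$, I would push $\Lambda$ to its largest admissible value, which one checks equals $\min\{1,\min_{\pi\in\mathcal F}\pi^\top\hat z\}$. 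If this value is $1$ the resulting objective is $f(\mathbf 1^\top\hat\beta)\le\hat t$; otherwise it equals $(\pi^\top\hat z)f(\mathbf 1^\top\hat\beta/\pi^\top\hat z)$ for the binding $\pi$, which is $\le\hat t$ by the corresponding inequality of \eqref{eq:defY}. Either way the target objective value is at most $\hat t$, and the slack can be absorbed by the recession direction $(\mathbf 0,\mathbf 0,1)$ of $Z_Q$.

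Task (ii) is the main obstacle. Given the totals $s_k$ from task (i), I must exhibit vectors $\beta^k$ with $\beta^k_i=0$ whenever $z^k_i=0$, with $\mathbf 1^\top\beta^k=s_k$, and with $\sum_k\lambda_k\beta^k=\hat\beta$. This is a sign-unrestricted transportation feasibility problem on the bipartite graph linking atoms to coordinates, solvable precisely when the prescribed margins balance on each connected component. The single aggregate balance already holds; connectedness of $G_Q$ is what lets me connect the incidence structure, by augmenting the decomposition with atoms realizing the edges of a spanning tree of $G_Q$, each taken with weight $\varepsilon$ and carrying a compensating mass as in the second paragraph (this is also what permits $\hat\beta_i\neq 0$ when $\hat z_i=0$, by attaching coordinate $i$ through a vanishing-weight atom). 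This produces, for every $\varepsilon>0$, a genuine convex combination of points of $Z_Q$ within $O(\varepsilon)$ of $(\hat z,\hat\beta,\hat t)$; letting $\varepsilon\to 0$ places the point in $\clconv(Z_Q)$. I expect the bookkeeping of this limiting and flow argument---verifying that the perturbing atoms can simultaneously connect the graph, fix the coordinatewise margins, and keep the objective and the $z$-coordinates within $O(\varepsilon)$---to be the delicate part, and the one that genuinely requires the connectedness hypothesis, whose failure is precisely what forces the separate treatment in \S\ref{sec:disconnected}.
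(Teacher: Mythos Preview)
Your outline is sound and would yield a correct proof, but it follows a genuinely different route from the paper's. The paper proves $\clconv(Z_Q)=Y$ by the support-function criterion: for every linear objective $a^\top z+b^\top\beta+ct$, the infima over $Z_Q$ and over $Y$ coincide. After disposing of degenerate cases ($c\le 0$), connectedness of $G_Q$ is used exactly once, to show that if $b$ is not a multiple of $\mathbf 1$ then both problems are unbounded (via the same $Me_i-Me_j$ trick you identified). With $c=1$ and $b=\bar b\,\mathbf 1$, the paper applies Fenchel's inequality to the perspective term, obtaining a \emph{linear} relaxation in $z$ alone whose objective involves $f^*(-\bar b)$; this relaxation is shown to have an integral optimum in $Q$ (by a short case analysis splitting on whether any $\pi^\top z^*<1$ is active, and using Lemma~\ref{lemma:Z0}), and that optimum is then lifted back to a feasible point of $Z_Q$ with matching value.

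Your approach is primal and constructive: you build, for each $(\hat z,\hat\beta,\hat t)\in R$, an explicit approximating convex combination of points of $Z_Q$. Your task~(i) is essentially dual to the paper's Fenchel step---your observation that the optimal $\Lambda$ equals $\min\{1,\min_{\pi\in\mathcal F}\pi^\top\hat z\}$ is the primal counterpart of the paper's case split, and it does require (as you note) Lemma~\ref{lemma:Z0} to verify that $\hat z/\Lambda$ stays in $\conv(Q)$. Your task~(ii), the sign-unrestricted transportation problem solved by augmenting with spanning-tree atoms of weight~$\varepsilon$, has no analogue in the paper: the support-function argument sidesteps the coordinatewise reconstruction of $\hat\beta$ entirely, because once $b=\bar b\,\mathbf 1$ only the scalar $\mathbf 1^\top\beta$ matters. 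This is what makes the paper's proof shorter; in exchange, your argument gives an explicit certificate of membership in $\clconv(Z_Q)$ and makes the role of connectedness more transparent. Both proofs exploit connectedness through the same $Me_i-Me_j$ redistribution idea, but yours uses it \emph{constructively} whereas the paper uses it \emph{reductively}.
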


Note that if $\mathbf 0\not\in Q$, i.e., $\mathcal F=\emptyset$, then Theorem~\ref{theo:hullConnected} states that the description of $\clconv(Z_Q)$ is obtained simply by dropping the complementarity constraints $ \beta_i(1 - z_i) = 0, \forall i \in [p]$ and independently taking the convex hull of $Q$. Otherwise, since the description of $\clconv(Z_Q)$ requires a new inequality for every element of $\mathcal F$, a minimal description of $\mathcal F$ is certainly preferred from a computational standpoint. If $\conv(Q^0)$ is full-dimensional, the strongest nonlinear inequalities \eqref{eq:valid} are obtained from facets of $\conv(Q^0)$. Moreover, in many situations, it may not be possible to have a full description of $\conv(Q)$ or $\conv(Q^0)$; nonetheless, in those cases, it may be possible to obtain a facet $\bar \pi^\top x\geq 1$ of $\conv(Q^0)$, and Theorem~\ref{theo:hullConnected} ascertains that the valid inequality 
\begin{equation}t \geq (\bar \pi^\top z)f\left(\frac{\mathbf 1^\top  \beta}{\bar \pi^\top z}\right)\end{equation} is not dominated by any other inequality of a similar form, and that inequalities of this form are sufficient to describe $\clconv(Z_Q)$. \revised{In Appendix \ref{sec:app} we focus on the special case where $\conv(Q)$ admits a compact representation but $\conv(Q_0)$ has exponentially many inequalities: We show how to use a compact extended formulation of $\conv(Q_0)$ to derive the description of $\clconv(Z_Q)$ in a higher dimensional space.}

 Before proving Theorem~\ref{theo:hullConnected}, we give a lemma used in the proof.

\begin{lemma}\label{lemma:Z0}
	$z \in \conv(Q)$ if and only if there exists some $\alpha \in [0,1]$ and $z^0 \in \conv(Q^0)$ such that $z = \alpha z^0$.
\end{lemma}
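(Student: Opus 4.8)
The plan is to reduce the statement to the elementary fact that the convex hull of a convex set together with the origin is exactly the set of its scalings by factors in $[0,1]$. Concretely, since $Q = Q^0 \cup \{\mathbf 0\}$ whenever $\mathbf 0 \in Q$, and since $\conv(A \cup B) = \conv\!\big(\conv(A) \cup \conv(B)\big)$, I would first record the structural identity $\conv(Q) = \conv\!\big(\conv(Q^0) \cup \{\mathbf 0\}\big)$. The lemma then asserts that this convex hull coincides with $\{\alpha z^0 \mid \alpha \in [0,1],\ z^0 \in \conv(Q^0)\}$, which I would establish by proving the two inclusions separately, i.e.\ the ``only if'' and ``if'' directions of the stated equivalence.

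For the forward (only if) direction, I would start from a representation of $z \in \conv(Q)$ as a finite convex combination $z = \sum_k \lambda_k q_k$ with $q_k \in Q$, $\lambda_k \geq 0$, and $\sum_k \lambda_k = 1$; finiteness is automatic since $Q \subseteq \{0,1\}^p$ is a finite set. I would split the index set according to whether $q_k = \mathbf 0$ or $q_k \in Q^0$, and set $\alpha := \sum_{k\,:\,q_k \in Q^0} \lambda_k \in [0,1]$. If $\alpha > 0$, then $z^0 := \tfrac{1}{\alpha}\sum_{k\,:\,q_k \in Q^0} \lambda_k q_k$ is a genuine convex combination of points of $Q^0$, so $z^0 \in \conv(Q^0)$, and since the $\mathbf 0$-terms contribute nothing we obtain $z = \alpha z^0$ as required. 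If $\alpha = 0$, then $z = \mathbf 0$, and I would write $z = 0 \cdot z^0$ for any $z^0 \in \conv(Q^0)$; here I would invoke the standing assumption that every coordinate is attained ($z_i = 1$ for some $z \in Q$), which guarantees $Q^0 \neq \emptyset$ so that such a $z^0$ exists.

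For the backward (if) direction, given $z = \alpha z^0$ with $\alpha \in [0,1]$ and $z^0 \in \conv(Q^0) \subseteq \conv(Q)$, I would simply write $z = \alpha z^0 + (1-\alpha)\mathbf 0$ and conclude $z \in \conv(Q)$ by convexity, using $\mathbf 0 \in Q \subseteq \conv(Q)$.

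The only genuinely delicate point, and the one I expect to be the main obstacle, is that the backward direction relies on $\mathbf 0 \in \conv(Q)$, that is, on $\mathbf 0 \in Q$; this is precisely the regime in which $Q^0 \subsetneq Q$ and the set $\mathcal F$ of Proposition~\ref{prop:F} is nonempty. When $\mathbf 0 \notin Q$ one has $Q^0 = Q$, and a scaling with $\alpha < 1$ can leave $\conv(Q)$, so the equivalence should be understood in the case $\mathbf 0 \in Q$ that is of interest for Theorem~\ref{theo:hullConnected}. Apart from this caveat, the argument is routine bookkeeping: the normalization in the forward direction is the substantive step, and the $\alpha = 0$ sub-case is what forces the nonemptiness check on $Q^0$.
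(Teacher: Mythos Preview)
Your proof is correct and follows essentially the same route as the paper: write $z$ as a convex combination of points of $Q$, separate the $\mathbf 0$ contribution, normalize the remaining weights to exhibit $z^0\in\conv(Q^0)$, and for the converse re-insert $(1-\alpha)\mathbf 0$. You are in fact more careful than the paper on two small points: you explicitly dispose of the $\alpha=0$ sub-case (the paper's displayed normalization tacitly assumes $\sum_{i\in\mathcal I}\gamma_i>0$), and your caveat about the backward direction requiring $\mathbf 0\in Q$ is well taken---the paper dismisses the case $\mathbf 0\notin Q$ with ``the result holds trivially by letting $\alpha=1$,'' which addresses only the forward direction, though since the lemma is invoked only in that direction in the proof of Theorem~\ref{theo:hullConnected} the issue is harmless.
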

\begin{proof}
Note that if $\mathbf 0\not\in Q$, then the result holds trivially by letting $\alpha=1$. Therefore, we will assume that $\mathbf 0\in Q$.

	($\Rightarrow$)  Let  $z \in \conv(Q)$. So we can write  $z$  as a convex combination of the extreme points of $Q$. Specifically, we distinguish between  the feasible points $z^i\in Q^0$ for $i\in\mathcal I$ and the origin. In particular, there exists $\gamma\ge \mathbf 0$ with $\sum_{i \in \mathcal I\cup\{0\}}\gamma_i=1$, such that
	\begin{align*}
	z &= \gamma_0 \mathbf{0} + \sum_{i \in \mathcal I}\gamma_i z^i  = \left(\sum_{i \in \mathcal I}\gamma_i\right) \sum_{i \in \mathcal I} \frac{\gamma_i}{\sum_{i \in \mathcal I} \gamma_i} z^i.
	\end{align*}   
	Letting $\alpha=\sum_{i \in \mathcal I}\gamma_i$, the result follows. 
	
	($\Leftarrow$)  Let $z = \alpha z^0$ for some $\alpha \in [0,1]$ and $z^0 \in \conv(Q^0)$; by definition, we can expand $z^{0}$ as 
	$z^{0} = \sum_{i \in \mathcal I} \gamma_i z^{i}$, a convex combination of $z^{i} \in Q^0$. By adding the term $(1 - \alpha) \mathbf{0}$, we have
	$z = (1 - \alpha) \mathbf{0} + \sum_{i \in \mathcal I} \alpha \gamma_i z^{i}$.~ 
\end{proof}

We are now ready to prove Theorem~\ref{theo:hullConnected}.
\begin{proof}[Proof of Theorem~\ref{theo:hullConnected}]
	Define $Y$ as the set described by \eqref{eq:defY}. Let $a,b\in \R^p, c\in \R$, and consider the two optimization problems
	\begin{align}
	\min_{z, \beta, t}\;& a^\top z + b^\top \beta  + c t \quad \text{subject to} \quad  (z,\beta,t) \in Z_Q,\text{ and}\label{eq:zbtinZQ_G}\\
	\min_{z, \beta, t}\;& a^\top z + b^\top \beta  + c t \quad \text{subject to} \quad  (z,\beta,t) \in Y.\label{eq:zbtinY_G}
	\end{align}
	We show that there exists a solution $(z,\beta,t)$ optimal for both problems, and that the corresponding objective values of both problems coincide.

	\paragraph{$\bullet$ Simple  cases:}
	 
	If $c < 0$, then both \eqref{eq:zbtinZQ_G} and \eqref{eq:zbtinY_G} are unbounded. To see this, let  $z= \beta = \mathbf 0$, and $t = \kappa$, where $\kappa\ge 0$. This solution is feasible for both \eqref{eq:zbtinZQ_G} and \eqref{eq:zbtinY_G}. Letting $\kappa\to  \infty$, the objective goes to minus infinity. 
	 
	If $c = 0$ and $b \neq \mathbf 0$, then let $z_j=1$ for some $j\in[p]$ such that $b_j\ne 0$, and let $\beta_j $ go to plus or minus infinity depending on whether $b_j$ is negative or positive, respectively, while keeping $\beta_i=0$ for $i\ne j$. Again, the objective goes to minus infinity.
	
	If $c = 0$ and $b = \mathbf 0$, then these two problems reduce to minimizing $a^\top z$ over $\conv(Q)$ and thus \eqref{eq:zbtinZQ_G} and \eqref{eq:zbtinY_G} are equivalent. 
	
	If $c > 0$, then we assume, without loss of generality, that $c = 1$ by scaling. If there exists $i_0 \neq j_0$ such that $b_{i_0} \neq b_{j_0}$, then there exists some $i$ and $j$ in a path from $i_0$ and $j_0$ in $G_Q$ such that $i\sim j$ and $b_i\neq b_j$, and without loss of generality, we assume $b_i<b_j$. Furthermore, there exists some $z \in Q$ such that $z_i = z_j = 1$.
	Then we take such a vector $z$, we let $\beta$ be a vector of zeros except for $ \beta_i = - \beta_j=\kappa$ for some $\kappa>0$, and we let $t = f(\mathbf 1^\top  \beta) = 0$. Such a triplet $(z, \beta, t)$ is in $Z_Q$ and $Y$, and by letting $\kappa\to\infty$, the objective goes to minus infinity. Therefore, we assume in the sequel that $b_i = \bar b$ for all $i\in [p]$.

	\paragraph{$\bullet$ Case $c=1$ and $b = \bar b \mathbf{1}$:} 
	
	We now show that for $b = \bar b \mathbf{1}$ problem  \eqref{eq:zbtinY_G} either has a finite optimal solution that is in set $Z_Q$ or is  unbounded. Note that \eqref{eq:zbtinY_G} is equivalent to:
	\begin{align*}
	\min_{z, \beta} \quad & a^\top z +\bar b\left(\mathbf 1^\top\beta\right) + \max\left\{f\left(\mathbf 1^\top \beta\right),\max_{\pi \in \mathcal{F}}\left\{\left(\pi^\top z\right) f\left(\frac{\mathbf 1^\top \beta}{\pi^\top z}\right) \right\}\right\}\\
	\text{s.t.} \quad & z \in \conv(Q),
	\end{align*}
	and, from Lemma~\ref{lemma:perspective}, it further simplifies to
	\begin{subequations}\label{eq:Yeq_G2}
		\begin{align}
\min_{z, \beta} \quad & a^\top z +\bar b\left(\mathbf 1^\top\beta\right) +  \min_{\pi \in \mathcal F}\{\pi^\top z,1\} f\left(\frac{\mathbf 1^\top \beta}{\min_{\pi \in \mathcal   F}\{\pi^\top z,1\}}\right)\label{eq:Yeq_G2-obj}\\
\text{s.t.} \quad & z \in \conv(Q).
\end{align}	 
\end{subequations}	
	
	Let $f^*:\R\to \R$ be the convex conjugate of function $f$, i.e., 
	$f^*(\gamma)=\sup_{x\in \R}\gamma x-f(x)$, and let $\Gamma=\left\{\gamma \in \R: f^*(\gamma)<\infty \right\}$ be the domain of $f^*$. Note that if $-\bar b\not\in \Gamma$, it follows that both \eqref{eq:zbtinZQ_G} and \eqref{eq:zbtinY_G} are unbounded. Thus, we assume in the sequel that $-\bar b\in \Gamma$.
	
	Observe that, given $w>0$, the convex conjugate of the function $wf(x/w)$ is
	 $wf^*(\gamma)$. 
	Hence, from Fenchel inequality, we find that\revised{,} for any $\beta$, $z$ such that $\pi^\top z>0$, and $\gamma\in \Gamma$,
	\begin{equation}\label{eq:fenchel}\min_{\pi \in \mathcal F}\{\pi^\top z,1\} f\left(\frac{\mathbf 1^\top \beta}{\min_{\pi \in \mathcal F}\{\pi^\top z,1\}}\right)\geq \gamma(\mathbf{1}^\top \beta)-\min_{\pi \in \mathcal F}\{\pi^\top z,1\}f^*(\gamma).\end{equation}
	Furthermore, for $\pi^\top z=0$ for some $\pi \in \mathcal F$, if the left hand side of \eqref{eq:fenchel} is infinity, then the inequality holds trivially; otherwise, if the left hand side of  \eqref{eq:fenchel} is $0f( (\mathbf 1^\top \beta)/0)=\lim_{z\to 0^+}zf( (\mathbf 1^\top \beta)/z)=d$ with $|d|<\infty$, then by continuity of the functions at both sides of the inequality, \eqref{eq:fenchel} is satisfied.
	
	Using \eqref{eq:fenchel} with $\gamma=-\bar b$ to lower bound the last term in  \eqref{eq:Yeq_G2-obj}, we obtain the relaxation
	\begin{align*}
	\min_{z, \beta, t} \quad & a^\top z + \bar b\left(\mathbf 1^\top\beta\right) +\left(-\bar b(\mathbf{1}^\top \beta)-\min_{\pi \in \mathcal{F}}\{\pi^\top z,1\}f^*(-\bar b)\right)\nonumber\\
	\text{s.t.} \quad & z \in \conv(Q),
	\end{align*}
	or, equivalently,
	\begin{subequations}\label{eq:Yeq1_G}
		\begin{align}
		\min_{z} \quad & a^\top z  +\max_{\pi \in \mathcal{F}}\{1-\pi^\top z,0\}f^*(-\bar b)-f^*(-\bar b)\\
		\text{s.t.} \quad & z \in \conv(Q).
		\end{align}
	\end{subequations}
	We will first prove that relaxation \eqref{eq:Yeq1_G} admits an optimal solution integral in $z$, and then we will show that the lower bound from the relaxation is in fact tight. 

	Note that if $\mathbf 0\not\in Q$, then $\mathcal F=\emptyset$ and there exists an optimal integer solution $z^*\in Q$ to the relaxation  \eqref{eq:Yeq1_G} with objective value $ a^\top z^* -f^*(-\bar b)$. 
	
Now consider the case that $\mathbf 0\in Q$.
	Let $z^*$ be an optimal solution of \eqref{eq:Yeq1_G}, and  consider two subcases. 
	\revised{
		\paragraph{$\bullet$ Subcase (i):}
	First, suppose that $1-\pi^\top z^*\leq 0$ for all $\pi\in \mathcal{F}$. 
	In this case, \eqref{eq:Yeq1_G} is equivalent to 
	\begin{subequations}\label{eq:case1}
		\begin{align}
		\min_{z}\quad \; & a^\top z -f^*(-\bar b)\\
		\text{s.t.}\quad \; &\pi^\top z\geq 1 &\forall \pi\in \mathcal{F}\\
		&z \in \conv(Q).
		\end{align}
	\end{subequations}
	From Proposition~\ref{prop:F}, the feasible region of \eqref{eq:case1} is precisely $\conv(Q^0)$, thus problem \eqref{eq:case1} admits an optimal integer solution $z^*\in Q^0$ with objective value $a^\top z^* -f^*(-\bar b)$. 
	
			\paragraph{$\bullet$ Subcase (ii): }
			Let $\bar \pi\in {\arg\min}_{\pi\in \mathcal{F}}\pi^\top z^*$, and suppose that $1-\bar\pi^\top z^*> 0$. In this case, problem \eqref{eq:Yeq1_G} is equivalent to 
	\begin{subequations}\label{eq:case2}
		\begin{align}
		\ell=\min_{z} \quad & a^\top z  -(\bar \pi^\top z) f^*(-\bar b)\\
		\text{s.t.} \quad& \pi^\top z\geq \bar\pi^\top z&\forall \pi\in \mathcal{F}\\
		\quad & z \in \conv(Q).
		\end{align}
	\end{subequations}
	Note that $f^*(-\bar b)=\sup_{x\in \R}-\bar bx-f(x)\geq 0$, because $x= 0$ is a possible solution to the supremum problem and $f(0)=0$. Since $z=\mathbf 0$ is feasible for \eqref{eq:case2}, we find that the objective value $\ell\leq 0$. If $\ell=0$, then $z^*=\mathbf 0$ is optimal and the proof is complete. Suppose now that $\ell<0$. Observe from Lemma~\ref{lemma:Z0} that $z^*=\alpha z_0$ for some $z_0\in \conv(Q^0)$ and $\alpha\in (0,1)$---the case $\alpha=1$ is excluded, since $1-\bar\pi^\top z_0\leq 0$ for any $z_0\in \conv(Q^0)$. Consequently, the point $\bar z=z_0=\frac{z^*}{\alpha}$, with objective value $\bar \ell=\ell/\alpha< \ell$ is feasible for \eqref{eq:case2} with better objective value than $z^*$, resulting in a contradiction.  
}

	\revised{From  subcases (i) and (ii)}, we see that either $z^*=\mathbf 0$ is feasible and optimal for relaxation \eqref{eq:Yeq1_G} (with objective value $0$), or that there exists an optimal integer solution $z^*$ with objective value $ a^\top z^* -f^*(-\bar b)$, regardless of whether $\mathbf 0\in Q$ or not. 
	We now prove that the lower bound  provided by the relaxation \eqref{eq:Yeq1_G} is tight, by finding $\beta^*\in \R^p$ such that $(z^*,\beta^*,f(\mathbf 1^\top \beta^*))$ is feasible for \eqref{eq:zbtinZQ_G} with the same objective value as  \eqref{eq:Yeq1_G}. If $z^*=\mathbf 0$, then clearly $(\mathbf 0,\mathbf 0,0)$ is optimal for \eqref{eq:zbtinZQ_G} with objective value $0$, and we now focus on the case $z^*\neq \mathbf 0$. Let $\bar x\in {\arg\sup}_{x\in \R}-\bar bx-f(x)$ and suppose that    $\bar x$ exists,  i.e., $\sup$ can be changed to $\max$,
	and observe that $f^*(-\bar b)=-\bar b\bar x-f(\bar x)$, or in other words
	$$a^\top z -f^*(-\bar b)=a^\top z +\bar b\bar x+f(\bar x).$$
	Since $z^*\neq \mathbf 0$, there exists $i$ such that $z_i^*=1$. Setting $\beta_i^*=\bar x$, $\beta_j^*=0$ for $j\neq i$, we find that the point $(z^*,\beta^*,f(\beta^*))$ is feasible for both  \eqref{eq:zbtinZQ_G} and \eqref{eq:zbtinY_G}, and since its objective value is the same as the lower bound obtained from \eqref{eq:Yeq1_G}, it is optimal for both problems. Now suppose that $\bar x$ above does not exist, but $(\bar x^1,\bar x^2,\ldots)$ is a sequence of points such that $-\bar b\bar x^i-f(\bar x^i)\to f^*(-\bar b)$. In this case, using identical arguments as above, we find a sequence of feasible points with objective value converging to $a^\top z^*-f^*(-\bar b)$: thus, the latter corresponds to the infimum of \eqref{eq:zbtinY_G} and the relaxation is tight.  
\end{proof}

\subsection{The general case}\label{sec:disconnected}
In this section, we give ideal formulations for $Z_Q$ when graph $G_Q$ in Definition~\ref{def:G} has several connected components. 
Given the graph $G_Q = (V, E)$, let $V_1, V_2, \dots, V_k$ be the vertex partition of connected components of $G_Q$. Let $\beta_{V_\ell}$ represent the subvector of $\beta$ corresponding to indices $V_\ell$.  Then
\[
\forall (z, \beta, t) \in Z_{Q}, f(\mathbf 1^{\top} \beta)=\sum_{\ell=1}^k f(\mathbf 1^{\top} \beta_{V_\ell}),
\]
because we cannot have two indices $i,j$ from different connected components such that $z_i = z_j = 1$. In other words, if $\beta_i \neq 0$ for some $i \in V_\ell, \ell\in[k]$, then $\beta_j = 0$ for all $j \in [p] \backslash V_\ell$.

For any $\ell=1,\ldots,k$, define the projection of the binary set $Q$ onto $V_\ell$ as 
\[
Q_\ell = \{ z \in \{0,1\}^{p} \; | \; z \in Q, z_i = 0, \; \forall i \notin V_\ell\},
\]
let $Q_\ell^0=Q_\ell \setminus \{\mathbf 0\}$ and note that, using arguments identical to those of Proposition~\ref{prop:F}, each $\conv(Q_\ell^0)$ admits a description as 
$$\conv(Q_\ell^0)=\conv(Q_\ell) \bigcap \{ z \; | \; \pi^\top z \geq 1, \; \forall\pi \in  \mathcal F_\ell\}$$ for some finite sets $\mathcal F_\ell \subseteq \R^p$. Note that $k>1$ and $\mathbf 0\not\in Q_\ell$ for some $\ell\in [k]$ implies that for all $z\in Q$, $z_i=0$ whenever $i\not\in V_\ell$. Therefore, we assume that $\mathbf 0\in Q_\ell$ and $\mathcal F_\ell\neq \emptyset$ for all $\ell\in [k]$. Furthermore, note that $\conv(Q_\ell)$ can be described as a system of linear inequalities, i.e., $A^\ell z^\ell \leq \delta^\ell$ for all $\ell\in [k]$.

We now give the main result of this section, namely a tight extended formulation for $\clconv(Z_{Q})$ when $G_Q$ has several connected components.
	\begin{theorem}\label{thm:notconnectedrevised}
		\small
\revised{		\begin{align*}
		\clconv(Z_{Q}) = \proj_{(z,\beta,t)}&\Big\{(z,\beta,\hat z,\hat \beta,\alpha,\hat t,t) \in [0,1]^p \times \R^{p}\times  [0,1]^{pk} \times \R^{pk}\times \R_+^k\times \R^k\times \R \; | \; \\
	&	 \sum_{\ell=1}^{k}\alpha_\ell=1, t=\sum_{\ell=1}^k \hat t^\ell, z=\sum_{\ell=1}^k \hat z^\ell, \beta=\sum_{\ell=1}^k \hat \beta^\ell,  A^\ell \hat z^\ell \leq \delta^\ell \alpha_\ell,\;\forall \ell\in [k],\\
	&	\hat t^\ell  \geq \alpha_\ell f\left( \frac{\mathbf 1^\top \hat \beta^\ell}{\alpha_\ell}  \right),\; \hat  t^\ell  \geq (\pi^\top \hat z^\ell)  f\left( \frac{\mathbf 1^\top \hat \beta^\ell}{\pi^\top \hat z^\ell} \right),\ \forall \pi \in \mathcal F_\ell,\;\forall \ell\in [k] \Big\}.
		\end{align*}
}
		\normalsize
	\end{theorem}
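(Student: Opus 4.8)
The plan is to reduce the disconnected case to the connected case of Theorem~\ref{theo:hullConnected} through a disjunctive decomposition of $Z_Q$. The starting observation is that, by Definition~\ref{def:G}, $i\sim j$ holds precisely when some $z\in Q$ has $z_i=z_j=1$; hence whenever $z\in Q$ satisfies $z_i=z_j=1$, the indices $i$ and $j$ are \emph{adjacent} in $G_Q$ and in particular lie in the same connected component. Consequently the support of every $z\in Q$ is contained in a single $V_\ell$, so that $Q=\bigcup_{\ell=1}^k Q_\ell$. Combining this with the complementarity constraints $\beta_i(1-z_i)=0$, which force $\beta_i=0$ whenever $z_i=0$ and therefore $\beta_i=0$ for all $i\notin V_\ell$ on the piece where $z\in Q_\ell$, I would first establish the set identity
\[
Z_Q=\bigcup_{\ell=1}^k Z_{Q_\ell},
\]
where $Z_{Q_\ell}$ denotes the restriction of $Z_Q$ in which $z\in Q_\ell$ and both $z$ and $\beta$ are supported on $V_\ell$.

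Next I would describe each piece. Since $V_\ell$ is a connected component of $G_Q$ and $Q_\ell$ collects exactly the points of $Q$ supported on $V_\ell$, the graph $G_{Q_\ell}$ is connected on $V_\ell$, so Theorem~\ref{theo:hullConnected} applies to each $Z_{Q_\ell}$ (after projecting out the variables fixed to zero off $V_\ell$), giving
\[
\clconv(Z_{Q_\ell})=\Big\{(z,\beta,t)\;|\;z\in\conv(Q_\ell),\ t\ge f(\mathbf 1^\top\beta),\ t\ge(\pi^\top z)f\big(\tfrac{\mathbf 1^\top\beta}{\pi^\top z}\big)\ \forall\pi\in\mathcal F_\ell\Big\},
\]
with $z,\beta$ supported on $V_\ell$ and $\conv(Q_\ell)=\{z:A^\ell z\le\delta^\ell\}$. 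The final step is to take the convex hull of the union: using $\clconv\big(\bigcup_\ell Z_{Q_\ell}\big)=\clconv\big(\bigcup_\ell\clconv(Z_{Q_\ell})\big)$ together with the disjunctive (Balas) representation of the closed convex hull of a union of closed convex sets, $\clconv(Z_Q)$ is the projection onto $(z,\beta,t)$ of the points $\sum_\ell(\hat z^\ell,\hat\beta^\ell,\hat t^\ell)$ with $(\hat z^\ell,\hat\beta^\ell,\hat t^\ell)\in\alpha_\ell\,\clconv(Z_{Q_\ell})$, $\alpha\ge\mathbf 0$, and $\sum_\ell\alpha_\ell=1$. Writing $\hat z^\ell=\alpha_\ell z^\ell$, $\hat\beta^\ell=\alpha_\ell\beta^\ell$, $\hat t^\ell=\alpha_\ell t^\ell$ and homogenizing the description above, the membership $z^\ell\in\conv(Q_\ell)$ becomes $A^\ell\hat z^\ell\le\delta^\ell\alpha_\ell$, while the identities $\pi^\top\hat z^\ell=\alpha_\ell\,\pi^\top z^\ell$ and $\mathbf 1^\top\hat\beta^\ell/\pi^\top\hat z^\ell=\mathbf 1^\top\beta^\ell/\pi^\top z^\ell$ turn the two epigraph inequalities into $\hat t^\ell\ge\alpha_\ell f(\mathbf 1^\top\hat\beta^\ell/\alpha_\ell)$ and $\hat t^\ell\ge(\pi^\top\hat z^\ell)f(\mathbf 1^\top\hat\beta^\ell/\pi^\top\hat z^\ell)$, recovering exactly the claimed formulation.

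The main obstacle I anticipate is the closure and recession-cone bookkeeping needed to make the disjunctive representation rigorous for these \emph{unbounded} convex sets. Concretely, I must verify that the $\alpha_\ell=0$ directions, interpreted through the closed-perspective convention $0f(x/0)=\lim_{s\to0^+}sf(x/s)$, coincide exactly with the recession cone of $\clconv(Z_{Q_\ell})$, so that the stated inequalities already capture the limit $\alpha_\ell\to0^+$ without needing a further explicit closure, and that the projection of the extended set is itself closed. A secondary point to handle carefully is the support of the lifted variables, namely that $\hat z^\ell$ and $\hat\beta^\ell$ vanish off $V_\ell$: for $\hat z^\ell$ this is enforced by $A^\ell\hat z^\ell\le\delta^\ell\alpha_\ell$ (as $\conv(Q_\ell)$ fixes $z_i=0$ for $i\notin V_\ell$), and for $\hat\beta^\ell$ it is inherited from the support restriction in the description of $\clconv(Z_{Q_\ell})$. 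As an alternative route that parallels the proof of Theorem~\ref{theo:hullConnected}, one could instead minimize $a^\top z+b^\top\beta+ct$ over both sets directly, observing that the weighting by $\alpha$ reduces the extended problem to a best-of-$k$ selection among the per-component problems already solved by the connected case.
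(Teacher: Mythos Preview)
Your proposal is correct and follows essentially the same route as the paper: decompose $Z_Q=\bigcup_{\ell=1}^k Z_{Q_\ell}$, apply Theorem~\ref{theo:hullConnected} to each piece, and then take the closed convex hull of the union via a disjunctive/perspective extended formulation. The only difference is that the paper dispatches the closure and recession-cone bookkeeping you flag by invoking Theorem~1 of \cite{ceria1999convex} directly, rather than verifying those details by hand.
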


\begin{proof}
Observe that $Z_{Q} = \bigcup_{\ell = 1}^{k} Z_{Q_\ell}$ and by Theorem \ref{theo:hullConnected}, $(\hat t^\ell, \hat \beta^\ell, \hat z^\ell) \in \clconv (Z_{Q_\ell})$ if and only if
\begin{align*}
& f(\mathbf 1^\top \hat \beta^\ell) -\hat t^\ell \leq 0,\\ 
& (\pi^\top \hat z^\ell)f\left(\frac{\mathbf 1^\top\hat \beta^\ell}{\pi^\top\hat z^\ell}\right) -\hat t^\ell \leq 0,  \forall \pi \in \mathcal F_\ell\\
& \hat z^\ell\in \conv(Q_\ell).
\end{align*}
Now we see that $\clconv (Z_{Q_\ell})$ has a representation in the form $$\clconv (Z_{Q_\ell})= \{(\hat t^\ell, \hat\beta^\ell, \hat z^\ell) \; | \; G^\ell (\hat t^\ell, \hat\beta^\ell, \hat z^\ell) \leq 0\},$$ where each component function of $G^\ell$ is closed and convex. Then using Theorem 1 in \cite{ceria1999convex}, we obtain a description of $\clconv (Z_{Q})$ in a higher-dimensional space by taking the perspective of $G^\ell$:
\begin{subequations}
\begin{align}
z&=\sum_{\ell=1}^k \hat z^\ell \label{eq:disj_z}\\
\beta&=\sum_{\ell=1}^k \hat\beta^\ell \label{eq:disj_b}\\
t&=\sum_{\ell=1}^k \hat t^\ell\\
1&=\sum_{\ell=1}^k\alpha_\ell\\
\hat t^\ell & \geq \alpha_\ell f\left( \frac{\mathbf 1^\top\hat \beta^\ell}{\alpha_\ell}  \right) &\forall \ell\in [k]\\
\hat t^\ell & \geq (\pi^\top \hat z^\ell)  f\left( \frac{\mathbf 1^\top \hat\beta^\ell}{\pi^\top\hat z^\ell} \right)  &\forall \ell \in [k], \pi \in \mathcal F_\ell\\
&A^\ell \hat z^\ell \leq \delta^\ell \alpha_\ell &\forall \ell \in [k].
\end{align}	
\end{subequations}
 \revised{Hence, the result follows. }
 
\end{proof}

\section{Special Cases}\label{sec:cases}

In this section, we use Theorems~\ref{theo:hullConnected} and \ref{thm:notconnectedrevised} to derive ideal formulations for $Z_Q$ under various constraints defining $Q$.  Direct proofs of Propositions \ref{theo:cardinality},~\ref{theo:k1} and~\ref{theo:hier} were given in the preliminary version of this paper \cite{WGK20} for the special case of convex quadratic functions.

\subsection{Unconstrained case}
Consider the unconstrained case where $Q_{\text u}= \{0,1\}^p$ and $$Z_{Q_{\text u}}=\left\{(z,\beta,t)\in\{0,1\}^p\times \R^{p+1}~|~ f(h^\top\beta)\leq t, \beta_i(1-z_i)=0,\; \forall i\in [p]\right\}.$$ 
\begin{proposition}\label{prop:unconstrained}
	$$\clconv(Z_{Q_{\text u}})=\left\{(z,\beta,t)\in[0,1]^p\times \R^{p+1}~|~f(h^\top\beta)\leq t,\; (\mathbf 1^\top z)f\left(\frac{h^\top \beta}{\mathbf 1^\top z}\right)\leq t\right\}.$$
\end{proposition}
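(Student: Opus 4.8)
The plan is to derive Proposition~\ref{prop:unconstrained} as a direct specialization of Theorem~\ref{theo:hullConnected}, so that the work reduces to verifying the theorem's hypotheses for $Q_{\text u}=\{0,1\}^p$ and identifying the associated set $\mathcal F$. First I would check connectivity of $G_{Q_{\text u}}$: for any $i\neq j$ the point $e_i+e_j$ lies in $\{0,1\}^p=Q_{\text u}$ and has $z_i=z_j=1$, so $i\sim j$ for every pair. Hence $G_{Q_{\text u}}$ is the complete graph on $[p]$, which is connected, and Theorem~\ref{theo:hullConnected} applies.

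Next I would assemble the two combinatorial ingredients the theorem needs. Clearly $\conv(Q_{\text u})=[0,1]^p$. For the set $\mathcal F$ I claim $\mathcal F=\{\mathbf 1\}$, i.e.
\begin{equation*}
\conv(Q_{\text u}^0)=[0,1]^p\cap\{z:\mathbf 1^\top z\geq 1\}.
\end{equation*}
To see this, note that $Q_{\text u}^0$ consists of all $0/1$ vectors except $\mathbf 0$, and the hyperplane $\mathbf 1^\top z=1$ separates $\mathbf 0$ (where $\mathbf 1^\top z=0$) from every other vertex of the cube (where $\mathbf 1^\top z\geq 1$), while passing through the neighbors $e_1,\dots,e_p$ of $\mathbf 0$. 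Consequently, cutting $[0,1]^p$ by $\mathbf 1^\top z\geq 1$ removes exactly the vertex $\mathbf 0$ and leaves the convex hull of the remaining $0/1$ points, which is precisely $\conv(Q_{\text u}^0)$; this verifies condition~\eqref{eq:Q0} with $\mathcal F=\{\mathbf 1\}$.

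With these in hand I would substitute into \eqref{eq:defY}. Since $\conv(Q_{\text u})=[0,1]^p$, the requirement $z\in\conv(Q_{\text u})$ is subsumed by the domain $z\in[0,1]^p$, and because $\mathcal F=\{\mathbf 1\}$ the family of perspective inequalities collapses to the single constraint $t\geq(\mathbf 1^\top z)f\!\left(\frac{\mathbf 1^\top\beta}{\mathbf 1^\top z}\right)$. Theorem~\ref{theo:hullConnected} then yields exactly the claimed description, but with $\mathbf 1^\top\beta$ in place of $h^\top\beta$, since the theorem is stated under the normalization $h=\mathbf 1$. The final step is to undo this normalization via the change of variables $\tilde\beta_i=h_i\beta_i$, valid because $h_i\neq 0$: this is a linear bijection of $\R^p\times\R^p\times\R$ that fixes $z$ and $t$, sends $h^\top\beta$ to $\mathbf 1^\top\tilde\beta$, and carries the complementarity constraints to themselves, so it commutes with both $\conv$ and closure. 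Applying its inverse to the normalized description restores $h^\top\beta$ and gives the stated formula.

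The computation is routine; the only points requiring care are the identification $\mathcal F=\{\mathbf 1\}$ (equivalently, the description of $\conv(Q_{\text u}^0)$) and the bookkeeping of the scaling step, since the proposition is phrased for general $h$ whereas Theorem~\ref{theo:hullConnected} assumes $h=\mathbf 1$. I do not expect any genuine obstacle beyond these.
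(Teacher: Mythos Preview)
Your proposal is correct and follows essentially the same route as the paper: identify $\conv(Q_{\text u}^0)=\{z\in[0,1]^p:\mathbf 1^\top z\geq 1\}$ so that $\mathcal F=\{\mathbf 1\}$, and then read off the description from Theorem~\ref{theo:hullConnected}. You are slightly more explicit than the paper in checking connectivity of $G_{Q_{\text u}}$ and in undoing the normalization $h=\mathbf 1$, but these are exactly the same ingredients.
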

\begin{proof}
In this case set $Q_{\text u}^0=\{0,1\}^p\setminus\{\mathbf 0\}$ and $\conv(Q_{\text u}^0)=\{z\in [0,1]^p~|~ \mathbf 1^\top z\geq 1\}$. Thus $\mathcal{F}=\{\mathbf 1\}$ in Theorem~\ref{theo:hullConnected}, corresponding to the valid inequality $\mathbf 1^\top z\geq 1$ defining $\conv(Q_{\text u}^0)$, and the result follows.   
\end{proof} 

Note that Proposition~\ref{prop:unconstrained} generalizes existing results in the literature: if $p=1$ and function $f$ is one-dimensional, then Proposition~\ref{prop:unconstrained} reduces to the perspective reformulation \citep{ceria1999convex}; if $p\geq 2$ and $f$ is quadratic, then Proposition~\ref{prop:unconstrained} reduces to the rank-one strengthening derived in \cite{atamturk2019rank}.

\subsection{Cardinality constraint}\label{sec:cardinality}
Consider sets defined by the cardinality constraint, 
$$Q_{\text c}=\left\{z\in \{0,1\}^p~|~\mathbf 1^\top z\leq q \right\}.$$
Clearly, $\conv(Q_{\text c})=\left\{z\in [0,1]^p~|~\mathbf 1^\top z\leq q \right\}$ for any positive integer $q$. 
We now prove that, under mild conditions, ideal formulations are achieved by strengthening only the nonlinear objective.

\begin{proposition}\label{theo:cardinality}  If $q\geq 2$ and integer, then 
 \begin{align*}\clconv(Z_{Q_{\text c}}) = \Big\{(z,\beta,t) \in [0,1]^{p} &\times \mathbb R^{p+1}~|~\mathbf 1^\top z \leq q,\; f(h^\top \beta) \leq t,\\
 &\left(\mathbf 1^\top z\right)f\left(\frac{h^\top \beta}{\mathbf 1^\top z}\right) \leq t \Big\}.\end{align*}
\end{proposition}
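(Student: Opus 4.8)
The plan is to recognize this as a direct application of Theorem~\ref{theo:hullConnected}, so that the work reduces to two checks: (a) that the graph $G_{Q_{\text c}}$ of Definition~\ref{def:G} is connected, and (b) that the set $\mathcal F$ describing $\conv(Q_{\text c}^0)$ via Proposition~\ref{prop:F} can be taken to be $\{\mathbf 1\}$. As a preliminary step I would invoke the scaling reduction from the start of \S\ref{sec:convexification} to assume $h=\mathbf 1$, so that the target formula is exactly the instance of the description \eqref{eq:defY} with $\mathcal F=\{\mathbf 1\}$; undoing the scaling at the end restores the general $h$.

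For connectivity, the hypothesis $q\geq 2$ is precisely what is needed: for any pair $i\neq j$, the point $e_i+e_j$ satisfies $\mathbf 1^\top z=2\leq q$, hence lies in $Q_{\text c}$ and certifies $i\sim j$. Thus $G_{Q_{\text c}}$ is the complete graph on $[p]$, and in particular connected, so Theorem~\ref{theo:hullConnected} applies. This also clarifies the role of the assumption: for $q=1$ no two indices are ever simultaneously active, $G_{Q_{\text c}}$ is edgeless, and one would instead have to appeal to the disconnected-case result, Theorem~\ref{thm:notconnectedrevised}, obtaining a more involved extended formulation.

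The main step is to compute $\conv(Q_{\text c}^0)$, where $Q_{\text c}^0=\{z\in\{0,1\}^p : 1\leq \mathbf 1^\top z\leq q\}$. I claim $\conv(Q_{\text c}^0)=\{z\in[0,1]^p : 1\leq \mathbf 1^\top z\leq q\}$. The inclusion $\subseteq$ is immediate; for $\supseteq$ I would show that the right-hand polytope is integral. At any of its vertices, $p$ linearly independent constraints among $\{z_i\geq 0,\ z_i\leq 1,\ \mathbf 1^\top z\geq 1,\ \mathbf 1^\top z\leq q\}$ are tight; since the two cardinality constraints are parallel, at most one of them is active, so at least $p-1$ coordinates are pinned to $0$ or $1$, and the integrality of $1$ and $q$ then forces the remaining coordinate to be integral as well. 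Granting this, $\conv(Q_{\text c})=\{z\in[0,1]^p : \mathbf 1^\top z\leq q\}$ differs from $\conv(Q_{\text c}^0)$ only by the single inequality $\mathbf 1^\top z\geq 1$, so by Proposition~\ref{prop:F} we may take $\mathcal F=\{\mathbf 1\}$.

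Substituting $\mathcal F=\{\mathbf 1\}$ into \eqref{eq:defY} then yields exactly the claimed description, since $z\in\conv(Q_{\text c})$ supplies the constraints $z\in[0,1]^p$ and $\mathbf 1^\top z\leq q$, while the two nonlinear inequalities reduce to $f(\mathbf 1^\top\beta)\leq t$ and $(\mathbf 1^\top z)f(\mathbf 1^\top\beta/\mathbf 1^\top z)\leq t$. The only genuinely non-routine point is the integrality argument for $\conv(Q_{\text c}^0)$; everything else is bookkeeping, and even there the parallel-constraints observation keeps the argument short.
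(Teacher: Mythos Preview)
Your proposal is correct and follows essentially the same route as the paper: verify that $G_{Q_{\text c}}$ is complete when $q\ge 2$, identify $\conv(Q_{\text c}^0)=\{z\in[0,1]^p:1\le\mathbf 1^\top z\le q\}$ so that $\mathcal F=\{\mathbf 1\}$, and apply Theorem~\ref{theo:hullConnected}. The paper's proof simply asserts the description of $\conv(Q_{\text c}^0)$ without the integrality argument you supply, so your version is a bit more detailed but not different in substance.
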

\begin{proof}
Note that if $q\ge 2$, then $G_{Q_{\text c}}$ is a complete graph, hence $i\sim j$ for all $i,j\in[p], i\ne j$. Furthermore, $\conv(Q_{\text c}^0)=\{z \in [0,1]^{p}: 1\le \mathbf 1^\top z \leq q\}$. Hence $\mathcal F=\{\mathbf 1\}$. Then the result follows from Theorem~\ref{theo:hullConnected}.  
\end{proof}

The assumption that $q\geq 2$ in Proposition~\ref{theo:cardinality} is necessary. As we show next, if $q=1$, then it is possible to strengthen the formulation with a valid inequality  that uses the information from the cardinality constraint, which was not possible for $q>1$. Note that the case $q=1$ is also of practical interest, as set $Q_{\text c}$ with $q=1$ arises for example when preventing multi-collinearity \cite{bertsimas2016} or when handling nested categorical variables \cite{carrizosa2020linear}.  
\begin{proposition} \label{theo:k1}If $q=1$, then 
 \begin{equation*}
 \clconv(Z_{Q_{\text c}}) = \left\{(z,\beta,t) \in [0,1]^{p} \times \mathbb R^{p+1} ~  | ~ \mathbf 1^\top z \leq \revised{q},\; \sum_{i \in [p]} z_if\left(\frac{h_i\beta_i}{z_i}\right) \leq t \right\}.
 \end{equation*}
\end{proposition}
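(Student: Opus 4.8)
The plan is to exploit the fact that $q=1$ makes $G_{Q_{\text c}}$ edgeless and then to read off the description from the disconnected-case formula of Theorem~\ref{thm:notconnectedrevised}, projecting out the auxiliary variables. First I would observe that when $q=1$ at most one indicator can equal one, so no pair $i\neq j$ satisfies $i\sim j$; hence $G_{Q_{\text c}}$ has no edges and decomposes into the $p$ singleton components $V_\ell=\{\ell\}$. (For $p=1$ this is a single connected vertex and the statement is just the perspective reformulation, which follows from Proposition~\ref{prop:unconstrained} after noting that $z_1 f(h_1\beta_1/z_1)\leq t$ implies $f(h_1\beta_1)\leq t$ via Lemma~\ref{lemma:perspective}; so I assume $p\geq 2$.) After the normalization $h_i=1$ through $\beta_i\mapsto h_i\beta_i$, each projected set is $Q_\ell=\{\mathbf 0,e_\ell\}$, so $\conv(Q_\ell)=\{z\,|\,z_i=0\ (i\neq\ell),\,0\leq z_\ell\leq 1\}$, $Q_\ell^0=\{e_\ell\}$, and $\conv(Q_\ell^0)$ is obtained by appending $z_\ell\geq 1$; thus $\mathcal F_\ell=\{e_\ell\}$ and the hypotheses $\mathbf 0\in Q_\ell$, $\mathcal F_\ell\neq\emptyset$ of Theorem~\ref{thm:notconnectedrevised} hold.

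Next I would instantiate the extended formulation of Theorem~\ref{thm:notconnectedrevised} on this data and simplify. The membership $\hat z^\ell\in\conv(Q_\ell)$ together with the complementarity inherited by $\clconv(Z_{Q_\ell})$ forces $\hat z^\ell_i=\hat\beta^\ell_i=0$ for $i\neq\ell$, so $\hat z^\ell=z_\ell e_\ell$, $\hat\beta^\ell=\beta_\ell e_\ell$, and in the original variables $\mathbf 1^\top\hat\beta^\ell=h_\ell\beta_\ell$ and $\mathbf 1^\top z=\sum_\ell z_\ell$. The constraint $A^\ell\hat z^\ell\leq\delta^\ell\alpha_\ell$ reduces to $0\leq z_\ell\leq\alpha_\ell$, which summed against $\sum_\ell\alpha_\ell=1$ yields $\mathbf 1^\top z\leq 1$. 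The inequality indexed by $\pi=e_\ell\in\mathcal F_\ell$ becomes $\hat t^\ell\geq z_\ell f(h_\ell\beta_\ell/z_\ell)$, and summing $t=\sum_\ell\hat t^\ell$ produces the separable bound $t\geq\sum_i z_i f(h_i\beta_i/z_i)$.

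The key step, and the only place real work happens, is to verify that projecting out $\alpha,\hat z,\hat\beta,\hat t$ gives exactly the claimed set: I must show the per-component constraint $\hat t^\ell\geq\alpha_\ell f(\mathbf 1^\top\hat\beta^\ell/\alpha_\ell)$ is redundant and that feasible $\alpha$ always exist. For redundancy I invoke Lemma~\ref{lemma:perspective}: since $z_\ell\leq\alpha_\ell$, monotonicity of the perspective gives $z_\ell f(h_\ell\beta_\ell/z_\ell)\geq\alpha_\ell f(h_\ell\beta_\ell/\alpha_\ell)$, so the $\pi=e_\ell$ bound dominates the $\alpha_\ell$ bound; this is also why the aggregate term $f(h^\top\beta)\leq t$ does not appear in the final description. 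For the reverse inclusion, given $(z,\beta,t)$ in the claimed set with $\mathbf 1^\top z\leq 1$, I would set $\hat z^\ell=z_\ell e_\ell$, $\hat\beta^\ell=\beta_\ell e_\ell$, $\hat t^\ell=z_\ell f(h_\ell\beta_\ell/z_\ell)$ (absorbing the slack $t-\sum_\ell\hat t^\ell\geq 0$ into any single $\hat t^\ell$), and choose $\alpha_\ell\geq z_\ell$ with $\sum_\ell\alpha_\ell=1$, which is feasible precisely because $\sum_\ell z_\ell\leq 1$; Lemma~\ref{lemma:perspective} again certifies $\hat t^\ell\geq\alpha_\ell f(h_\ell\beta_\ell/\alpha_\ell)$.

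The main obstacle I anticipate is the bookkeeping around degenerate coordinates and the convention $0f(\cdot/0)$: I must ensure that $\hat\beta^\ell$ is genuinely supported on coordinate $\ell$ inside the closed convex hull $\clconv(Z_{Q_\ell})$, so that $\mathbf 1^\top\hat\beta^\ell$ collapses to the single term $h_\ell\beta_\ell$, and that the boundary cases $z_\ell=0$ and $\alpha_\ell=0$ are resolved through the closure convention rather than breaking the monotonicity argument of Lemma~\ref{lemma:perspective}.
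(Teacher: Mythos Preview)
Your proposal is correct and follows essentially the same route as the paper's proof: apply Theorem~\ref{thm:notconnectedrevised} to the $p$ singleton components, observe that $\hat z^\ell$ and $\hat\beta^\ell$ are supported on coordinate $\ell$, use Lemma~\ref{lemma:perspective} together with $z_\ell\leq\alpha_\ell$ to show the $\alpha_\ell$-perspective constraint is dominated, and project out $\alpha$ to recover $\mathbf 1^\top z\leq 1$. Your treatment is more explicit than the paper's (you handle $p=1$ separately, spell out the reverse inclusion by constructing feasible $\alpha$, and flag the $0f(\cdot/0)$ convention), but the skeleton and the key lemma are identical.
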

\begin{proof}
	First, observe that if $q=1$, then $G_{Q_{\text c}}$ is fully disconnected and it  decomposes into $p$ nodes, one for each variable $z_i, i\in [p]$: thus, in Theorem~\ref{thm:notconnectedrevised}, we find that $\hat z_i^\ell\neq 0$ and $\hat \beta_i^\ell\neq 0$ if and only if $\ell=i$. In addition, because each component  $\ell\in[p]$ has a single variable $\hat z_i^{i}$ for $\ell=i$,  $A^\ell\hat z^\ell\leq \delta^\ell $ is given by  $\hat z_i^{i}\leq 1$. Moreover, we find that $\mathcal{F}_i=\{1\}$ for all $i\in [p]$ in Theorem~\ref{thm:notconnectedrevised}. Thus, from Theorem~\ref{thm:notconnectedrevised}, we find that

			\begin{align*}
		\clconv(Z_{Q_{\text c}}) = \proj_{(z,\beta,t)}&\Big\{(z,\beta,\hat z,\hat \beta,\alpha,\hat t,t) \;| \; 
		\sum_{i=1}^{n}\alpha_i=1, t=\sum_{i=1}^n \hat t^i,\\ &z_i=\hat z_i^i, \; \beta_i= \hat \beta_i^i,\; \hat z_i^i \leq  \alpha_i,\;\forall i\in [p],\\
		&	\hat t^i  \geq \alpha_i f\left( \frac{\hat h_i\beta_i^i}{\alpha_i}  \right),\; \hat  t^i  \geq \hat z_i^i  f\left( \frac{h_i\hat \beta_i^i}{\hat z_i^i} \right),\;\forall i\in [p] \Big\}.
		\end{align*}

	Constraints $\hat z_i^i\leq \alpha_i$ imply that $\hat z_i^i  f\left( \frac{\hat \beta_i^i}{\hat z_i^i} \right)\geq \alpha_i f\left( \frac{\hat \beta_i^i}{\alpha_i}  \right)$. Finally, variables $\hat z_i^i$ and $ \hat \beta_i^i$ can be substituted with $z_i$ and $\beta_i$, variables $\alpha_i$ can be projected out (resulting in the inequality $\mathbf 1^\top z\leq 1$), and the result follows.  
\end{proof}

\subsection{Strong hierarchy constraints}
We now consider the hierarchy constraints. Hierarchy constraints arise from regression problems under the model \eqref{eq:constrainedRegression}, where the random variables include individual features as well as variables representing the interaction (usually pairwise) between a subset of these features given by a collection $\mathcal P$ of subsets of $[p]$. More formally, let the random variable $\theta(S)$ represent the (multiplicative) interaction of the features $i\in S$ for some subset $S\subseteq [p]$. 
Under this setting, the strong hierarchy constraints 
\begin{equation}\label{eq:strhier}
\theta(S)\neq 0\implies \beta_i\neq 0, \; \forall i\in S
\end{equation}
have been shown to improve statistical performance \cite{bien2013lasso,hazimeh2019learning} by ensuring that interaction terms are considered only if all corresponding features are present in the regression model. Strong hierarchy constraints can be enforced via the constraints $z(S)\leq z_i$ for all $i\in S$, where $z(S)\in \{0,1\}$ is an  indicator variable such that $\theta(S)(1-z(S))=0$. Thus, in order to devise strong convex relaxations of problems with hierarchy constraints, we study the set
$$Q_{\textrm{sh}}=\left\{z \in \{0,1\}^{p} ~ | ~ z_p \leq z_i, \; \forall i \in [p-1] \right\}.$$ 
Note that in $Q_{\textrm{sh}}$ we identify $S$ with $[p-1]$, $z(S)$ with $z_p$ and $\theta(S)$ with $\beta_p$; since $p$ is arbitrary, this identification is without loss of generality.

To establish the convex hull of  $Z_{Q_{\textrm{sh}}}$,  we give a lemma that characterizes $\conv(Q_{\textrm{sh}}^0).$ First, observe that 
   \begin{equation}\label{eq:strongF}\sum_{i \in [p-1]} z_i  - (p-2) z_p \geq 1\end{equation} 
is a  valid inequality for $Q_{\textrm{sh}}^0$. To see this, note that for $z\ne \mathbf 0$, if $z_p=0$, then we must have $\sum_{i \in [p-1]} z_i \ge 1$, and if $z_p=1$, then we must have $\sum_{i \in [p-1]} z_i =p- 1 $, so the validity follows.

\begin{lemma}\label{lem:integralQ}
$$\Conv(Q_{\textrm{sh}}^0) = \left\{z\in [0,1]^{p} ~|~ \sum_{i \in [p-1]} z_i  - (p-2) z_p \geq 1,\; z_p \leq z_i,\ \forall  i \in [p-1] \right\}.$$
\end{lemma}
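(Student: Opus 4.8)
The plan is to prove the two inclusions between $\conv(Q_{\textrm{sh}}^0)$ and the polyhedron $P$ on the right-hand side separately. The inclusion $\conv(Q_{\textrm{sh}}^0)\subseteq P$ is immediate: every $z\in Q_{\textrm{sh}}^0$ satisfies the box constraints $z\in[0,1]^p$ and the hierarchy constraints $z_p\leq z_i$ by virtue of lying in $Q_{\textrm{sh}}$, and it satisfies \eqref{eq:strongF} by the validity argument given just before the lemma; since $P$ is convex, it then contains $\conv(Q_{\textrm{sh}}^0)$.

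For the reverse inclusion $P\subseteq\conv(Q_{\textrm{sh}}^0)$, I would take an arbitrary $\bar z\in P$ and exhibit it as a convex combination of points of $Q_{\textrm{sh}}^0$, splitting off the all-ones vector with weight exactly $t:=\bar z_p$. If $t=1$, the constraints $t\leq\bar z_i\leq 1$ force $\bar z=\mathbf 1\in Q_{\textrm{sh}}^0$. Otherwise $t<1$, and I would set $w:=(\bar z-t\mathbf 1)/(1-t)$, so that $\bar z=t\,\mathbf 1+(1-t)\,w$ with $\mathbf 1\in Q_{\textrm{sh}}^0$.

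The crux is to show that $w$ lies in the convex hull of the slice $\{z\in\{0,1\}^p : z_p=0,\ \sum_{i\in[p-1]} z_i\geq 1\}\subseteq Q_{\textrm{sh}}^0$, whose convex hull is $\{w\in[0,1]^p : w_p=0,\ \sum_{i\in[p-1]} w_i\geq 1\}$ (the ``at least one nonzero coordinate'' polytope, as already used in the proof of Proposition~\ref{prop:unconstrained}). A direct check gives $w_p=0$ and $w_i\in[0,1]$ for $i\in[p-1]$ from $t\leq\bar z_i\leq 1$, while the decisive bound follows from \eqref{eq:strongF}: $\sum_{i\in[p-1]} w_i=\bigl(\sum_{i\in[p-1]}\bar z_i-(p-1)t\bigr)/(1-t)\geq\bigl(1+(p-2)t-(p-1)t\bigr)/(1-t)=1$. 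Hence $w$, and therefore $\bar z$, belongs to $\conv(Q_{\textrm{sh}}^0)$.

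The only delicate point, and thus the step I expect to be the main obstacle, is identifying the correct split weight $t=\bar z_p$: the coefficient $-(p-2)$ in \eqref{eq:strongF} is calibrated precisely so that the residual mass $\sum_{i\in[p-1]} w_i$ clears the threshold $1$, which is exactly what the final computation verifies. Everything else reduces to the classical description of the ``at least one'' polytope, which can be cited rather than reproved.
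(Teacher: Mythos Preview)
Your proof is correct, and it follows a genuinely different route from the paper's.

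The paper proves the reverse inclusion by showing that every extreme point of the polyhedron $Q_g$ on the right-hand side is integral. It proceeds by a case analysis on $z_p^*$: if $z_p^*\in\{0,1\}$ or if \eqref{eq:strongF} is slack, total unimodularity of the remaining constraint system gives integrality directly; the main work is the case where \eqref{eq:strongF} is tight and $0<z_p^*<1$, which is dispatched by a perturbation argument showing that such a point cannot be extreme.

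Your argument instead gives an explicit decomposition of any $\bar z\in P$ as a convex combination of points of $Q_{\textrm{sh}}^0$: split off $\mathbf 1$ with weight $t=\bar z_p$, and observe that the residual $w$ lands in the ``at least one'' polytope on the slice $z_p=0$, whose integrality is standard. This is more constructive and avoids both the extreme-point case analysis and the perturbation step; it also makes transparent \emph{why} the coefficient $-(p-2)$ in \eqref{eq:strongF} is the right one, since that is exactly what forces $\sum_{i\in[p-1]} w_i\geq 1$ after the rescaling. The paper's approach, by contrast, has the advantage of being self-contained (no appeal to a known polytope description) and fits the general pattern of verifying integrality of vertices, which is a reusable template for similar lemmas.
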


\begin{proof}
Let  $$Q_g = \left\{z\in [0,1]^{p} ~|~ \sum_{i \in [p-1]} z_i  - (p-2) z_p \geq 1,\; z_p \leq z_i,\ \forall  i \in [p-1] \right\}.$$  We will first show that the extreme points of $Q_g$ are integral. Then we will prove that $\conv(Q_{\textrm{sh}}\setminus \{\mathbf 0\})=Q_g.$

Suppose $z^{*}$ is an extreme point of $Q_g$. 
Observe that if $z_p^{*}$ is equal to $1$, then $z_i^{*}=1$ for all $i\in [p-1]$. If $z_p^{*}$ is equal to $0$, then the constraint matrix defining $Q_g$ is totally unimodular, thus all extreme points of $Q_g$ with $z_p^{*}=0$ are integral. If constraint \eqref{eq:strongF} is not tight at an extreme point, then because the remaining constraint matrix defining  $Q_g$ is totally unimodular,  the corresponding extreme point of $Q_g$  is integral. Therefore, it suffices to consider extreme points where \eqref{eq:strongF} holds at equality and $0<z_p^{*}<1$.

Now suppose $\sum_{i \in [p-1]} z^{*}_i  - (p-2)z^{*}_p = 1$ and 
 $1 > z^{*}_p > 0$.   We first show that $z_i^{*}=1$ for at most one coordinate $i\in [p-1]$. If $z^{*}_i =z_j^{*}=1 $ for $i\ne j$, then 
 \begin{equation}\sum_{\ell \in [p-1]} z^{*}_\ell  - (p-2)z^{*}_p = z^{*}_i + \sum_{\ell \in [p-1], \ell \neq i} (z^{*}_{\ell} - z^{*}_p) \geq z^{*}_i + (z^{*}_{j} - z^{*}_p) > z^{*}_i = 1,\label{eq:two1}\end{equation} where the first inequality follows from dropping terms $z_\ell^*-z_p^*\geq 0$ with $\ell\neq j$, and the second inequality follows from the assumption $z_j^*=1$ and $z_p^*<1$. Since \eqref{eq:two1} contradicts $\sum_{i \in [p-1]} z^{*}_i  - (p-2)z^{*}_p = 1$, it follows that $z_i^{*}=1$ for at most one coordinate $i\in [p-1]$. 
 
 Next, observe that  if $z^{*}_i = z^{*}_p$  for all  $i \in [p-1]$, then $\sum_{i \in [p-1]} z^{*}_i  - (p-2)z^{*}_p = z^{*}_p < 1$. Therefore, the largest element in $z^{*}_i, i \in [p-1]$ has to be strictly greater than $z^{*}_p$. 
Finally, we now show that  we can perturb $z^{*}_p$ and the $p-2$ smallest elements in $z^{*}_i, i \in [p-1]$ by a small quantity $\epsilon$ and remain in $Q_g$.   The equality $\sum_{i \in [p-1]} z_i  - (p-2) z_p = 1$ clearly holds after the perturbation. And, adding a small quantity $\epsilon$ to $z^{*}_p$ and the $p-2$ smallest elements in $z^{*}_i, i \in [p-1]$ does not violate the hierarchy constraint since the largest element in $z^{*}_i, i \in [p-1]$ is strictly greater than $z^{*}_p$. 
   Finally, since $z^{*}_i \ge z^{*}_p >0, \forall i \in [p-1]$, subtracting a small quantity $\epsilon$ does not violate the non-negativity constraint. Thus, we can write $z^{*}$ as a convex combination of two points in $Q_g$, which is a contradiction. 
  
  To see  that $Q_g=\conv(Q_{\textrm{sh}}^0)$, first, observe that $\mathbf 0\not \in Q_g$. Also, 
 \eqref{eq:strongF} is a valid inequality for $Q_{\textrm{sh}}^0$.  Furthermore, we just showed that the extreme points of $Q_g$ are integral, hence  
$Q_g=\conv(Q_{\textrm{sh}}^0)$.

  \end{proof}

Now we are ready to give an ideal formulation for $Z_{Q_{\textrm{sh}}}$.

\begin{proposition}\label{theo:hier} The closure of the convex hull of $Z_{Q_{\textrm{sh}}}$ is given by
	\begin{equation*}
	\begin{split}
	\clconv(Z_{Q_{\textrm{sh}}}) = \Big\{&(z,\beta,t) \in [0,1]^{p} \times \mathbb R^{p+1}~|~  f(h^\top \beta) \leq t,\  z_p \leq z_i, \forall i \in [p-1], \\
	& \left(\sum_{i \in [p-1]} z_i - (p-2) z_p\right)f\left(\frac{h^\top \beta}{\sum_{i \in [p-1]} z_i - (p-2) z_p}\right) \leq t \Big\}.
	\end{split}
	\end{equation*}
\end{proposition}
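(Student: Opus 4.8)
The plan is to read the claim off directly from Theorem~\ref{theo:hullConnected}, using Lemma~\ref{lem:integralQ} to supply the set $\mathcal F$. Three things must be checked before the theorem applies: that the graph $G_{Q_{\textrm{sh}}}$ is connected (so that we may work in the original space rather than invoking the extended formulation of Theorem~\ref{thm:notconnectedrevised}), that $\conv(Q_{\textrm{sh}})$ is exactly the polyhedron $\{z\in[0,1]^p: z_p\le z_i,\ \forall i\in[p-1]\}$ cut out by the hierarchy inequalities, and that the single inequality from Lemma~\ref{lem:integralQ} is precisely the $\mathcal F$ feeding the nonlinear term.

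First I would establish connectivity of $G_{Q_{\textrm{sh}}}$, in fact that it is complete. For any pair $i,j\in[p-1]$, the point with $z_i=z_j=1$ and all other coordinates (including $z_p$) equal to $0$ lies in $Q_{\textrm{sh}}$, since $z_p=0\le z_\ell$ holds trivially; hence $i\sim j$. For a pair $\{i,p\}$ with $i\in[p-1]$, activating $z_p$ forces every $z_\ell=1$, so the all-ones vector $\mathbf 1\in Q_{\textrm{sh}}$ witnesses $i\sim p$. Thus every pair is adjacent and $G_{Q_{\textrm{sh}}}$ is connected, so the hypothesis of Theorem~\ref{theo:hullConnected} is met.

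Next I would identify the data entering Theorem~\ref{theo:hullConnected}. The coefficient matrix of the constraints $z_p\le z_i$ carries one $+1$ and one $-1$ per row and is therefore a network matrix, hence totally unimodular; together with the box constraints this forces integral vertices, so $\conv(Q_{\textrm{sh}})=\{z\in[0,1]^p: z_p\le z_i,\ \forall i\in[p-1]\}$, matching the linear part of the claimed description. By Lemma~\ref{lem:integralQ}, the only facet of $\conv(Q_{\textrm{sh}}^0)$ not already present in $\conv(Q_{\textrm{sh}})$ is $\sum_{i\in[p-1]}z_i-(p-2)z_p\ge 1$; accordingly $\mathcal F=\{\pi\}$ with $\pi_i=1$ for $i\in[p-1]$ and $\pi_p=-(p-2)$, so that $\pi^\top z=\sum_{i\in[p-1]}z_i-(p-2)z_p$. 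Substituting this $\conv(Q_{\textrm{sh}})$ and this single $\pi$ into \eqref{eq:defY} produces exactly the two nonlinear inequalities $f(\mathbf 1^\top\beta)\le t$ and $(\pi^\top z)f((\mathbf 1^\top\beta)/(\pi^\top z))\le t$ stated in the proposition; the passage from $\mathbf 1^\top\beta$ to the general $h^\top\beta$ is supplied by the scaling $\beta_i\mapsto h_i\beta_i$ noted at the start of \S\ref{sec:convexification}, which is a linear bijection fixing the complementarity constraints and hence commuting with $\clconv(\cdot)$.

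Given how much is inherited from the earlier results, I do not expect a serious obstacle; the only points requiring genuine care are confirming that node $p$ is joined to the rest of the graph---slightly delicate because $z_p$ can be active only when all other indicators are active---and correctly matching the sign pattern of $\pi$ so that $\pi^\top z$ is the coefficient vector $(1,\dots,1,-(p-2))$ rather than its negation. Both are quickly dispatched above, after which the proposition follows verbatim from Theorem~\ref{theo:hullConnected}.
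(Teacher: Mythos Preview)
Your proposal is correct and follows essentially the same route as the paper's proof: verify that $G_{Q_{\textrm{sh}}}$ is complete, use total unimodularity to obtain $\conv(Q_{\textrm{sh}})$, read off $\mathcal F$ as a singleton from Lemma~\ref{lem:integralQ}, and then invoke Theorem~\ref{theo:hullConnected}. Your version is somewhat more explicit in justifying the completeness of $G_{Q_{\textrm{sh}}}$ and the passage from $\mathbf 1^\top\beta$ to $h^\top\beta$, but the argument is the same.
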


\begin{proof}
First, observe that the constraint matrix defining $Q_{\textrm{sh}}$ is totally unimodular, so $\conv(Q_{\textrm{sh}})=\left\{z \in [0,1]^{p} ~ | ~ z_p \leq z_i, \; \forall i \in [p-1] \right\}.$ Note that $G_{Q_{\textrm{sh}}}$ is a complete graph, hence $i\sim j$ for all $i,j\in[p], i\ne j$.  
Hence, from Lemma~\ref{lem:integralQ},  $\mathcal F=\{(1,\dots,1, -(p-2)) \}$. Then the result follows from Theorem~\ref{theo:hullConnected}.
  \end{proof}

\subsection{Weak hierarchy}
Consider the strong hierarchy relation \eqref{eq:strhier}, which requires all variables in the set $S$ to have non-zero coefficients to capture a multiplicative effect, $\theta(S)$ on the response variable $y$. The weak hierarchy relation \cite[]{bien2013lasso}   is a relaxation of the strong hierarchy relation to address the interaction between random variables in the same subset $S$ by requiring
$$\theta(S)\neq 0\implies \beta_i\neq 0, \; \text{ for \emph{some} } i\in S.$$
Using similar arguments as before, we formulate the weak hierarchy relation  as 
$
z_p \leq \sum_{i \in [p-1]} z_i,
$
in other words,
$z_1, z_2, \dots, z_{p-1} = 0 \implies z_p = 0.$ 
The corresponding constrained indicator variable set is thus defined by
\[
Q_{\textrm{wh}} = \left\{ z \in \{0,1\}^{p} \; | \; z_p \leq \sum_{i \in  [p-1]} z_i\right\}.
\]
Note that $\mathbf 1 \in Q_{\textrm{wh}}$, thus the graph $G_{Q_{\textrm{wh}}}$ is connected and Theorem~\ref{theo:hullConnected} can be used to derive the convex hull.

\begin{proposition}\label{theo:weakh}
	\begin{align*}\clconv(Z_{Q_{\textrm{wh}}}) = \Big\{(z,\beta,t) \in [0,1]^{p}& \times \mathbb R^{p+1} ~ | ~ f(h^\top \beta) \leq t, z_p \leq \sum_{i \in  [p-1]} z_i,\\
	 &\left(\sum_{i \in [p-1]} z_i\right)f\left(\frac{h^\top \beta}{\sum_{i \in [p-1]} z_i}\right) \leq t \Big\}.\end{align*}
\end{proposition}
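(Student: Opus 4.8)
The plan is to invoke Theorem~\ref{theo:hullConnected}, which is available here precisely because, as already noted in the text, $\mathbf 1 \in Q_{\textrm{wh}}$ forces $i \sim j$ for all $i,j$, so $G_{Q_{\textrm{wh}}}$ is connected. Applying the theorem requires supplying two ingredients: a linear description of $\conv(Q_{\textrm{wh}})$, and a finite set $\mathcal F$ satisfying \eqref{eq:Q0}. I would claim that $\conv(Q_{\textrm{wh}}) = \{z \in [0,1]^p : z_p \le \sum_{i\in[p-1]} z_i\}$ and that the single inequality $\sum_{i\in[p-1]} z_i \ge 1$, i.e.\ $\mathcal F = \{(1,\ldots,1,0)\}$, suffices. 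Granting these, the conclusion is immediate: membership $z \in \conv(Q_{\textrm{wh}})$ supplies the constraint $z_p \le \sum_{i\in[p-1]} z_i$, the base inequality of Theorem~\ref{theo:hullConnected} supplies $f(h^\top\beta)\le t$, and the lone element $\pi=(1,\ldots,1,0)$ of $\mathcal F$ has $\pi^\top z = \sum_{i\in[p-1]} z_i$, yielding the perspective inequality stated in the proposition (applied, as in the other special cases, after the scaling that reduces $\mathbf 1^\top\beta$ to $h^\top\beta$).

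For $\conv(Q_{\textrm{wh}})$ I would argue by total unimodularity, exactly as in Proposition~\ref{theo:hier}: the set $Q_{\textrm{wh}}$ is carved out of the $0/1$ cube by the single inequality $z_p - \sum_{i\in[p-1]} z_i \le 0$, whose coefficient row has entries in $\{-1,0,1\}$ and is therefore trivially totally unimodular; appending the identity rows for the box constraints preserves this property, so the natural relaxation has integral vertices and equals $\conv(Q_{\textrm{wh}})$.

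The heart of the argument, and the step I expect to demand the most care, is verifying that the single inequality $\sum_{i\in[p-1]} z_i \ge 1$ is enough to serve as $\mathcal F$. Validity for $Q_{\textrm{wh}}^0$ is easy: for any $z\ne\mathbf 0$, if $z_p=1$ the defining constraint already gives $\sum_{i\in[p-1]} z_i\ge 1$, while if $z_p=0$ then some coordinate in $[p-1]$ must be nonzero. Sufficiency, however, amounts to showing that the polytope $P := \conv(Q_{\textrm{wh}}) \cap \{z : \sum_{i\in[p-1]} z_i \ge 1\}$ is integral, for then its integer points are exactly $Q_{\textrm{wh}}\setminus\{\mathbf 0\}=Q_{\textrm{wh}}^0$ and hence $P=\conv(Q_{\textrm{wh}}^0)$, which is the content of \eqref{eq:Q0} via Proposition~\ref{prop:F}. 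I would again appeal to total unimodularity: $P$ is described by the two rows $(-1,\ldots,-1,1)$ and $(-1,\ldots,-1,0)$ together with the box constraints, and the associated $2\times p$ matrix has every column equal to $(-1,-1)^\top$ or $(1,0)^\top$; a direct check shows every $2\times 2$ minor lies in $\{-1,0,1\}$, so (after appending the identity) the system is totally unimodular and $P$ is integral. As a fallback, one can instead enumerate the tight-constraint configurations at a vertex of $P$ and confirm each forces integral coordinates, in the spirit of Lemma~\ref{lem:integralQ}. With $\mathcal F=\{(1,\ldots,1,0)\}$ thus justified, Theorem~\ref{theo:hullConnected} yields the claimed description of $\clconv(Z_{Q_{\textrm{wh}}})$.
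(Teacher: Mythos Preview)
Your proposal is correct and follows essentially the same approach as the paper: verify connectedness of $G_{Q_{\textrm{wh}}}$, show $\conv(Q_{\textrm{wh}})$ is given by its natural relaxation via total unimodularity, establish that $\mathcal F=\{(1,\ldots,1,0)\}$ suffices by proving integrality of the corresponding polytope, and invoke Theorem~\ref{theo:hullConnected}. The only cosmetic difference is that the paper drops the (redundant, once $\sum_{i\in[p-1]} z_i\ge 1$ and $z_p\le 1$ are imposed) weak-hierarchy constraint and observes the remaining system is an interval matrix, whereas you keep it and check the $2\times 2$ minors directly.
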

\begin{proof}
First, observe that  the constraint matrix defining $Q_{\textrm{wh}}$ is totally unimodular, hence $\conv(Q_{\textrm{wh}}) = \left\{ z \in [0,1]^{p} \; | \; z_p \leq \sum_{i \in  [p-1]} z_i\right\}.$
Clearly, $\sum_{i \in [p-1]} z_i \geq 1$
	is valid for $Q_{\textrm{wh}}^{0}$ since $z_1 = \cdots =z_{p-1}=0 \implies z_p = 0$. 
	It suffices to show that 
	\begin{equation}\label{eq:constraintsWeak}
	\conv(Q_{\textrm{wh}}^{0}) = \left\{ z \in [0,1]^{p} | \sum_{i \in [p-1]} z_i \geq 1 \right\}.
	\end{equation}
	 All extreme points of the polyhedron on the right-hand side of \eqref{eq:constraintsWeak}
	 are integral, because the associated constraint matrix is an interval matrix with integral right-hand side. The result follows from Theorem \ref{theo:hullConnected}. 
	 
\end{proof}

\section{A note on separable functions}\label{sec:separable}

In this section, we demonstrate that the proof technique used in \S\ref{sec:convexification} can be extended to separable functions with constraints, resulting in relatively simple proofs generalizing existing results in the literature. 

Given a partition of $[p]=\bigcup_{j=1}^\ell V_j$ and convex functions $f_j:\R^{V_j}\to \R$ such that $f_j(\mathbf 0)=0$, consider the  epigraph of a separable function of the form:
	\begin{align*}
	W = \Big\{ z\in Q\subseteq \{0,1\}^\ell, \beta\in \R^p, t\in \R ~|~&  \sum_{j=1}^\ell f_j(\beta_{V_j}) \leq t ,\\
	 &\beta_i (1 - z_j) = 0,\; \forall j\in [\ell], i\in V_j\Big\}. 
	\end{align*}
	As Theorem~\ref{theo:separable} below shows, ideal formulations of $W$ can be obtained by applying the perspective reformulation on the separable nonlinear terms and, \emph{independently}, strengthening the continuous relaxation of $Q$. 
	Let 
	$$Y_s=\left\{ (z,\beta,t) \in \mathbb R^{\ell+p+1} ~ | ~ \sum_{j=1}^\ell z_jf_j\left(\frac{\beta_{V_j}}{z_j}\right) \leq t,\; z \in \conv(Q) \right\}.$$
	
	\begin{theorem}\label{theo:separable} $Y_s$ is the closure of the convex hull of $W$: 
		$\clconv(W)=Y_s.$
	\end{theorem}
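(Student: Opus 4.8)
The plan is to follow the optimization-based technique used in the proof of Theorem~\ref{theo:hullConnected}: for an arbitrary linear functional $a^\top z + b^\top \beta + ct$ (with $a\in\R^\ell$, $b\in\R^p$, $c\in\R$), I would compare its minimization over $W$ and over $Y_s$ and show the two optimal values coincide and are attained, possibly in the limit, at a common point of $W$. The separable structure makes this argument considerably cleaner than the general case, because each block $V_j$ carries its own indicator $z_j$, so the combinatorial set $Q$ decouples entirely from the continuous perspective terms and no auxiliary family $\mathcal F$ of inequalities is needed.

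First I would verify the easy inclusion $W \subseteq Y_s$, from which $\clconv(W) \subseteq Y_s$ follows because $Y_s$ is closed and convex: each perspective $z_j f_j(\beta_{V_j}/z_j)$ is closed and convex under the stated closure convention, and $\conv(Q)$ is a polytope. For membership, note that for $(z,\beta,t)\in W$ and each block $j$, if $z_j=1$ then $z_j f_j(\beta_{V_j}/z_j)=f_j(\beta_{V_j})$, while if $z_j=0$ the complementarity constraint forces $\beta_{V_j}=\mathbf 0$ and the convention gives $z_j f_j(\beta_{V_j}/z_j)=0=f_j(\mathbf 0)$; summing yields $\sum_j z_j f_j(\beta_{V_j}/z_j)=\sum_j f_j(\beta_{V_j})\leq t$, and $z\in Q\subseteq\conv(Q)$.

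For the reverse inclusion $Y_s\subseteq\clconv(W)$ I would dispatch the degenerate cases $c<0$ and $c=0$ exactly as in Theorem~\ref{theo:hullConnected}, where both produce unbounded problems unless the linear part is trivial, and then treat $c=1$. After eliminating $t$ at optimality, minimizing over $Y_s$ becomes $\min_{z\in\conv(Q),\,\beta}\; a^\top z + b^\top\beta + \sum_j z_j f_j(\beta_{V_j}/z_j)$. Here I would apply Fenchel's inequality blockwise: using the multidimensional analogue of the observation in the proof of Theorem~\ref{theo:hullConnected} that the conjugate of $\beta_{V_j}\mapsto z_j f_j(\beta_{V_j}/z_j)$ is $z_j f_j^*(\cdot)$, the choice of dual variable $-b_{V_j}$ gives $z_j f_j(\beta_{V_j}/z_j)\geq -b_{V_j}^\top\beta_{V_j}-z_j f_j^*(-b_{V_j})$. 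The $\beta$ terms then cancel, and the objective is bounded below by the \emph{linear} program $\min_{z\in\conv(Q)}\sum_j\bigl(a_j-f_j^*(-b_{V_j})\bigr)z_j$. Assuming each $-b_{V_j}\in\dom(f_j^*)$ (otherwise the problem is unbounded), this linear program attains its minimum at an extreme point of $\conv(Q)$, which is an integral point $z^*\in Q$.

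It then remains to show this lower bound is tight within $W$. For each block with $z_j^*=1$ I would set $\beta_{V_j}^*$ equal to a maximizer $\bar x_j\in\arg\max_u\bigl(-b_{V_j}^\top u-f_j(u)\bigr)$, making the corresponding Fenchel inequality an equality, and set $\beta_{V_j}^*=\mathbf 0$ when $z_j^*=0$; the resulting triple $\bigl(z^*,\beta^*,\sum_j f_j(\beta_{V_j}^*)\bigr)$ lies in $W$ and meets the relaxation value, so the two optima agree. The main obstacle is the same technical point as in Theorem~\ref{theo:hullConnected}: the supremum defining $f_j^*(-b_{V_j})$ need not be attained. In that event I would replace each $\bar x_j$ by an approximating sequence $\bar x_j^{(m)}$ with $-b_{V_j}^\top \bar x_j^{(m)}-f_j(\bar x_j^{(m)})\to f_j^*(-b_{V_j})$, producing a sequence of points of $W$ whose objective converges to the infimum over $Y_s$ and thereby certifying that the two infima coincide, which gives $\clconv(W)=Y_s$.
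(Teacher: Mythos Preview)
Your proposal is correct and follows essentially the same approach as the paper's proof: both reduce the comparison of $W$ and $Y_s$ to minimizing an arbitrary linear functional, use the blockwise Fenchel inequality $z_j f_j(\beta_{V_j}/z_j)\ge -b_{V_j}^\top\beta_{V_j}-z_j f_j^*(-b_{V_j})$ to collapse the problem to the linear program $\min_{z\in\conv(Q)}\sum_j\bigl(a_j-f_j^*(-b_{V_j})\bigr)z_j$, pick an integral optimizer $z^*\in Q$, and then recover matching $\beta^*$ (possibly via a limiting sequence). Your write-up is slightly more explicit than the paper's in verifying the easy inclusion $W\subseteq Y_s$ and in dispatching the degenerate cases $c<0$ and $c=0$, but the substance is identical.
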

	
	\begin{proof}
		Validity of the corresponding  inequality in $Y_s$ follows directly from the validity of the perspective reformulation. 
		For any $(a,b,c) \in \mathbb R^{\ell+p+1}$ consider the following two problems
		\begin{align}\label{eq:zbtinX}
		\min \quad &a^\top  z + b^\top  \beta + c t  \qquad~\mbox{subject to\;\;\;\;} (z, \beta, t) \in W,
		\end{align}
		and
		\begin{align}\label{eq:zbtinY}
		\min \quad &a^\top  z + b^\top  \beta + c t  \qquad~\mbox{subject to\;\;\;\;} (z, \beta, t) \in Y_s.
		\end{align}
		It suffices to show that \eqref{eq:zbtinX} and \eqref{eq:zbtinY} are equivalent, i.e., there exists an optimal solution of \eqref{eq:zbtinY} that is optimal for \eqref{eq:zbtinX} with the same objective value. As before, we may assume that $c = 1$ without loss of generality. For $j\in [\ell]$, let $f_j^*:\R^{V_j}\to \R$ be the convex conjugate of function $f_j$, i.e.,  $$f_j^*(\gamma)=\sup_{\beta\in \R^{V_j}}\gamma^\top\beta-f_j(\beta),$$ and let $\Gamma_j=\left\{\gamma\in \R^{V_j}: f_j^*(\gamma)<\infty \right\}.$  
		From Fenchel's inequality corresponding to the perspective function, we find that for any $\beta\in \R^{V_j}$, $z_j\geq 0$ and $\gamma\in \Gamma_j$, 
		\begin{equation}\label{eq:fenchel2}
		z_jf_j\left(\frac{\beta}{z_j}\right)\geq \gamma^\top\beta-z_jf_j^*(\gamma). 
		\end{equation}
		
		Observe that both \eqref{eq:zbtinX} and \eqref{eq:zbtinY} are unbounded if $-b_{V_j}\not \in \Gamma_j$ for some $j\in [\ell]$. Otherwise, if $-b_{V_j}\in \Gamma_j$ for all $j\in [\ell]$, we use \eqref{eq:fenchel2} with $\gamma=-b_{V_j}$ for each $j\in [\ell]$ to lower bound the objective of \eqref{eq:zbtinY}, resulting in the relaxation
		\begin{subequations}\label{eq:zbtinconvQ}
			\begin{align}
			\min \quad & \sum_{j=1}^\ell \Big(a_j-f_j^*(-b_{V_j})\Big)z_j \\
			\text{s.t.} \quad& z \in \conv(Q),
			\end{align}
		\end{subequations}
	which admits an optimal solution $z^*\in Q$. Letting $\beta_{V_j}^*\in {\arg\sup}_{\beta\in \R^{V_j}}-b_{V_j}^\top\beta-f_j(\beta_{V_j})$ whenever $z_j^*=1$ and $\beta_{V_j}^*=\mathbf 0$ otherwise, we find a feasible solution for \eqref{eq:zbtinX} with the same objective value.
		  \end{proof}
	
		Theorem~\ref{theo:separable}  generalizes the result of \citet{xie2018ccp} for  \linebreak $Q=\left\{z\in \{0,1\}^p ~|~ \sum_{i=1}^p z_i\leq q\right\}$,  $V_j=\{j\}$, and $f_j(\beta_j)=\beta_j^2$ for $j\in[p]$.  
Theorem~\ref{theo:separable}  also generalizes the result of	\citet{baccinew2019} for the case that $f_j$ is convex, differentiable and certain constraint qualification conditions hold,  \revised{applied to our setting}.  \revised{However, \citet{baccinew2019} consider more general settings where multiple polyhedra are connected by a single binary variable, and under linear constraints on the continuous variables.} 
	
\section{Quadratic Case: Implementation via Semidefinite Optimization}	\label{sec:quadratic}
In this section we review how to implement the convexifications derived in \S\ref{sec:convexification} for the special case of quadratic optimization. \revised{Given observations $(x_i,y_i)_{i=1}^n$ with $x_i\in \R^p$ and $y_i\in \R$, let $X_{n\times p}$ defined as $X_{ij}=(x_i)_j$ be the model matrix, and} consider least square regression problems
\begin{subequations}\label{eq:leastsquares}
\begin{align}
\min_{z,\beta}\;&\|y-X\beta\|_2^2+\lambda\|\beta\|_2^2+\mu\|z\|_1\\
\text{s.t.}\;&\beta_i(1-z_i)=0&\forall i\in [p]\label{eq:compl}\\
&\beta\in \R^p,\; z\in Q\subseteq\{0,1\}^p,\label{eq:leastsquares_bounds}
\end{align}
\end{subequations}
where the regularization terms $\lambda\|\beta\|_2^2$ and  $\mu\|z\|_1$ penalize the $\ell_2$-norm and  $\ell_0$-norm of $\beta$, respectively. \revised{A natural convexification of \eqref{eq:leastsquares} based on the $\ell_2$-regularization term $\lambda\|\beta\|_2^2$ is to directly use the perspective relaxation \cite{bertsimas2017sparse,xie2018ccp}
\begin{subequations}\label{eq:leastsquaresPersp}
	\begin{align}
	\min_{z,\beta}\;&\|y-X\beta\|_2^2+\lambda\sum_{i=1}^p t_i+\mu\|z\|_1\\
	\text{s.t.}\;
	&\beta_i^2\leq t_i z_i&\forall i\in [p]\\
	&\beta\in \R^p,\; z\in \conv(Q).
	\end{align}
\end{subequations}
Formulation \eqref{eq:leastsquaresPersp} can either be directly implemented with conic quadratic solvers \cite{akturk2009strong}, implemented via cutting plane methods \cite{frangioni2009computational} or via tailored methods specific to linear regression \cite{hazimeh2020sparse}. However, \eqref{eq:leastsquaresPersp} is weak if $\lambda$ is small.
}

\revised{In this paper we focus on relaxations that do not assume the presence of the $\ell_2$-regularization term  (but require solving an SDP). In particular,}
letting $B\approx\beta\beta^\top$, \citet{dong2015regularization} propose the semidefinite relaxation of \eqref{eq:leastsquares} given by 
\begin{subequations}\label{eq:perspective}
	\begin{align}
	\min_{z,\beta,B}\;&\|y\|_2^2-2y^\top X\beta+\langle X^\top X+\lambda I, B\rangle+\mu\sum_{i=1}^pz_i \label{eq:perspective-obj}\\
	\text{s.t.}\;&\begin{pmatrix}z_i & \beta_i \\ \beta_i & B_{i,i}\end{pmatrix}\succeq 0&\forall i\in [p]\label{eq:perspective1}\\
	&\begin{pmatrix}1 & \beta^\top \\ \beta & B\end{pmatrix}\succeq 0\\
	&\beta\in \R^p,\; \revised{B\in \R^{p\times p}},\; z\in \conv(Q),\label{eq:perspective_bounds}
	\end{align}
\end{subequations}
which dominates the \revised{perspective relaxation \eqref{eq:leastsquaresPersp}, as well as any perspective relaxation obtained from extracting a diagonal matrix from $X^\top X+\lambda I$, e.g., using the method in \cite{frangioni2007sdp}}.  We now discuss how \eqref{eq:perspective} can be further strengthened.

Given any $T\subseteq [p]$, let $\beta_T$, $z_T$ and $B_T$ the subvectors of $\beta$ and $z$ and submatrix of $B$ induced by $T$, respectively. Moreover, let $Q_T$ be the projection of $Q$ onto the subspace of variables in $T$. First, observe that in order to apply our theoretical developments to this setting, we need to extract a convex function of the form $f(h^\top \beta_T)$ for some $h\in \R^{|T|}$. In particular, we consider quadratic $f$. Note that for any $h$,   from Theorem~\ref{theo:hullConnected}, we can obtain valid inequalities of the form 
\begin{equation}\label{eq:valid_quad}t \geq \frac{( h^\top  \beta_T)^2}{\pi^\top z_T},\; \; \forall \pi \in \mathcal F_T\end{equation}
for some set $\mathcal F_T\subseteq \R^{|T|}$ describing $Q_T^0$. Inequalities \eqref{eq:valid_quad} can  then be included in formulation \eqref{eq:perspective} by using the methodology given in \cite{han20202x2}, as discussed next.

For  any $h\in \R^{|T|}$, we find that for $z\in Q_T$ and $B_T=\beta_T\beta_T^\top$ satisfying \eqref{eq:compl},
\begin{equation}\label{eq:derivation}
\langle hh^\top,B_T\rangle= (h^\top \beta)^2\geq \frac{( h^\top  \beta_T)^2}{\pi^\top z_T}.
\end{equation}
Observe that inequality \eqref{eq:derivation} is valid for any vector $h$. Therefore, by optimizing over $h$ to find the strongest inequality, we obtain  
\begin{equation}\label{eq:sdp0}0\geq \max_{h\in \R^{|T|}}\left\{ \frac{( h^\top  \beta_T)^2}{\pi^\top z_T}-\langle hh^\top,B_T\rangle\right\}.
\end{equation}
\revised{Inequality \eqref{eq:sdp0} is satisfied if and only if $h^\top\left(\beta_T\beta_T^\top/\pi^\top z_T-B_T\right)h\leq 0$ for all $h\in \R^{|T|}$, or, equivalently, if $B_T-\beta_T\beta_T^\top/\pi^\top z_T\succeq 0$. Using Schur complement, we conclude that constraint \eqref{eq:sdp0} is equivalent to 
\begin{equation}\label{eq:sdp}
\begin{pmatrix}\pi^\top z_T & \beta_T^\top \\
\beta_T &B_T\end{pmatrix}\succeq 0.
\end{equation}
 }
Observe that inequalities \eqref{eq:perspective1} are in fact special cases of \eqref{eq:sdp} with $T=\{i\}, i\in[p]$.

\section{Numerical Results}

In this section, we provide numerical results to compare relaxations of regression problems. \revised{Specifically, in \S\ref{sec:leastSquares} we present computations with sparse least squares regression problems }with all pairwise (second-order) interactions and strong hierarchy constraints \cite{hazimeh2019learning}\revised{; in \S\ref{sec:logistic} we present computations with logistic regression.} The \revised{conic optimization} problems are solved with MOSEK 8.1 solver on a laptop with a 2.0 GHz  intel(R)Core(TM)i7-8550H CPU with 16 GB main memory.

\subsection{Least squares regression with hierarchy constraints}\label{sec:leastSquares}

\revised{In this section we focus on least squares regression problems with hierarchy constraints.} \rev{A usual approach to compute estimators to statistical inference problems is either to use the relaxation of a suitable convex relaxation directly, or to round the solution obtained from such convex relaxations, see for example \cite{atamturk2019rank,atamturk2018sparse,bertsimas2020sparse,bertsimas2020mixed,dong2015regularization,pilanci2015sparse,xie2018ccp}. Thus, as a proxy to evaluate the quality of the estimators obtained, we focus on the optimality gap provided by such approaches. In \S\ref{sec:computations_formulations} we discuss the relaxations used, and in \S\ref{sec:computations_rounding} we discuss a simple rounding heuristic, which guarantees that the produced solutions satisfy the hierarchy constraints.}

\subsubsection{Formulations}\label{sec:computations_formulations}

Given \revised{observations $(x_\ell,y_\ell)_{\ell=1}^n$}, we consider relaxations of the problem
\begin{subequations}\label{eq:hierarchy}
	\begin{align}
	\min_{z,\beta}\;&\sum_{\ell=1}^n\left(y_\ell-\sum_{i=1}^p x_{\ell i}\beta_i-\sum_{i=1}^p\sum_{j=i}^p x_{\ell i}x_{\ell j}\beta_{ij}\right)^2+\lambda\|\beta\|_2^2+\mu\|z\|_1 \\
	\text{s.t.}\;&\beta_i(1-z_i)=0\quad \quad \forall i\in [p]\\
	&\beta_{ij}(1-z_{ij})=0\quad \quad \forall i,j\in [p],\; i\leq j\\
	&z_{ii}\leq z_i\quad \quad \forall i\in [p]\label{eq:constUni1}\\
	&z_{ij}\leq z_i,\; z_{ij}\leq z_j\quad \quad \forall i,j\in [p],\; i\leq j\label{eq:constUni2}\\
	&\beta\in \R^{p(p+3)/2},\; z\in\{0,1\}^{p(p+3)/2}.
	\end{align}
\end{subequations}
We standardize the data so that all columns have $0$ mean and norm 1, i.e., $\|y\|_2=1 $, $\|X_i\|_2^2=1$ for all $i\in [p]$, and $\|X_i\circ X_j\|_2^2=1$ for all $i\leq j$ \revised{(where $X_i\in \R^n$ and $(X_i)_\ell=x_{\ell i}$)}. Note that constraints \eqref{eq:constUni1}-\eqref{eq:constUni2} are totally unimodular, hence $\conv(Q)$ in \eqref{eq:perspective_bounds} can be obtained simply by relaxing integrality constraints to $0\leq z\leq 1$. 

In addition to the \textbf{\revised{optimal} perspective} reformulation \eqref{eq:perspective}, we consider the following strengthenings.
\begin{description}
	\item[\textbf{Rank1}] Inequalities \eqref{eq:sdp} for all sets $T$ of cardinality 2 using the ``unconstrained" convexification given in Proposition~\ref{prop:unconstrained}. This formulation was originally proposed in \cite{atamturk2019rank}. The resulting semidefinite constraints are of the form \begin{align*}&\begin{pmatrix}z_i+z_j & \beta_i &\beta_j\\
	\beta_i & B_{i,i}& B_{i,j}\\
	\beta_j & B_{i,j}& B_{j,j}\end{pmatrix}\succeq 0,\; \begin{pmatrix}z_i+z_{jk} & \beta_i &\beta_{jk}\\
	\beta_i & B_{i,i}& B_{i,jk}\\
	\beta_{jk} & B_{i,jk}& B_{jk,jk}\end{pmatrix}\succeq 0,\\
	&\text{ or }\begin{pmatrix}z_{i_1i_2}+z_{j_1j_2} & \beta_{i_1i_2} &\beta_{j_1j_2}\\
	\beta_{i_1i_2} & B_{i_1i_2,i_1i_2}& B_{i_1i_2,j_1j_2}\\
	\beta_{j_1j_2} & B_{i_1i_2,j_1j_2}& B_{j_1j_2,j_1j_2}\end{pmatrix}\succeq 0.\end{align*}
	\item[\textbf{Hier}]  Inequalities \eqref{eq:sdp} for \revised{all sets $T$ linked by hierarchy constraints. Specifically, from constraints \eqref{eq:constUni1} we add constraints with $|T|=2$ of the form
	 $$\begin{pmatrix}z_i & \beta_{i} &\beta_{ii}\\
	\beta_i & B_{i,i}& B_{i,ii}\\
	\beta_{ii} & B_{i,ii}& B_{ii,ii}\end{pmatrix}\succeq 0.$$
	Moreover, from constraints \eqref{eq:constUni2}, linking the three variables $\beta_i$, $\beta_j$ and $\beta_{ij}$, we add constraints involving pairs of variables $\beta_i$ and $\beta_{ij}$ of the form 
	$$\begin{pmatrix}z_i & \beta_{i} &\beta_{ij}\\
	\beta_i & B_{i,i}& B_{i,ij}\\
	\beta_{ij} & B_{i,ij}& B_{ij,ij}\end{pmatrix}\succeq 0.$$
	Constraints involving pairs of variables $\beta_j$ and $\beta_{ij}$ are identical and added as well. Finally, constraints considering the three variables simultaneously are added, resulting in constraints with $|T|=3$ of the form 
	$$\begin{pmatrix}z_i+z_j-z_{ij} & \beta_{i} &\beta_{i}&\beta_{ij}\\
	\beta_i & B_{i,i}& B_{i,j}&B_{i,ij}\\
	\beta_{j} & B_{i,j}&B_{j,j}&B_{j,ij}\\
	\beta_{ij}& B_{i,ij}&B_{j,ij}&B_{ij,ij}\end{pmatrix}\succeq 0.$$
}	
	\item[\textbf{Rank1+hier}] All inequalities of both 
	\textbf{Rank1} and 
	\textbf{Hier}.
\end{description}

\subsubsection{Upper Bounds and Gaps}\label{sec:computations_rounding}

Given the solution of the convex relaxation, we use a simple rounding heuristic to recover a feasible solution to problem \eqref{eq:hierarchy}: we round $z_i$ and fix it to the nearest integer---observe that a rounded solution always satisfies hierarchy constraints \eqref{eq:constUni1}-\eqref{eq:constUni2}---, and solve the resulting convex optimization problem in terms of $\beta$. Given the objective value $\nu_\ell$ of the convex relaxation and $\nu_u$ of the heuristic, we can bound the optimality gap as $\text{gap}=\frac{\nu_u-\nu_\ell}{\nu_u}\times 100\%.$  

\subsubsection{Instances and parameters}

We test the formulations on \revised{ six datasets: Crime (from \cite{hastie2015statistical}), Diabetes (from \cite{efron2004least}), Housing, Wine\_quality (red), Forecasting\_orders and Bias\_correction (latter four from 
\cite{Dua:2017})}. Table~\ref{tab:dataset} shows the number of observations $n$ and number of original regression variables $p$, as well as the total number of variables $p(p+3)/2$ after adding all second order interactions. Finally we use regularization values $(\lambda,\mu)=(0.01i, 0.01j)$ for all $0\leq i,j\leq 30$ with $i,j\in \mathbb{Z}_+$ \revised{for all datasets but Bias\_correction, for which we use for $1\leq i\leq 20$ and $1\leq j\leq 30$ with $i,j\in \mathbb{Z}_+$, due to its larger size and longer relaxation solution times}. 

\begin{table}[htb]
	\begin{center}
		\caption{Datasets.}
		\label{tab:dataset}
		\begin{tabular}{c| c c c}
			\hline
			\textbf{dataset}&${n}$&${p}$&${p(p+3)/2}$\\
			\hline
			Crime & 51 & 5 & 20 \\
			Diabetes & 442 & 10 & 65 \\
			Wine\_quality (red) & 1599 & 11 & 77\\
			Forecasting\_orders&60&12&90\\
			Housing & 507 & 13 & 104\\
			Bias\_correction & 7,590 & 18 & 189\\
			\hline
		\end{tabular}
	\end{center}
\end{table}

\subsubsection{Results}

\begin{figure}[!htb]
	\centering
	\subfloat[Crime]{\includegraphics[width=0.5\textwidth,trim={10.1cm 5.5cm 11cm 5cm},clip]{./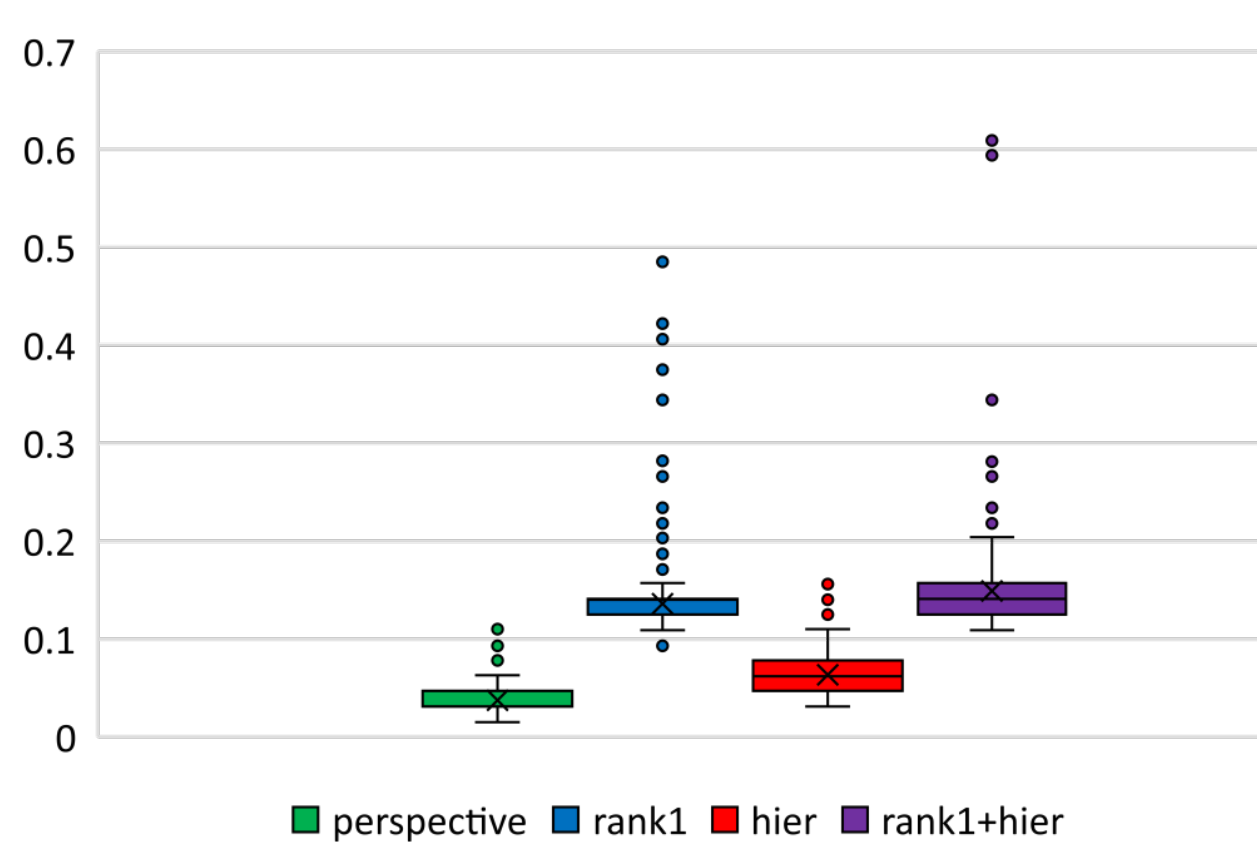}}\hfill\subfloat[Diabetes]{\includegraphics[width=0.5\textwidth,trim={10.1cm 5.5cm 11cm 5cm},clip]{./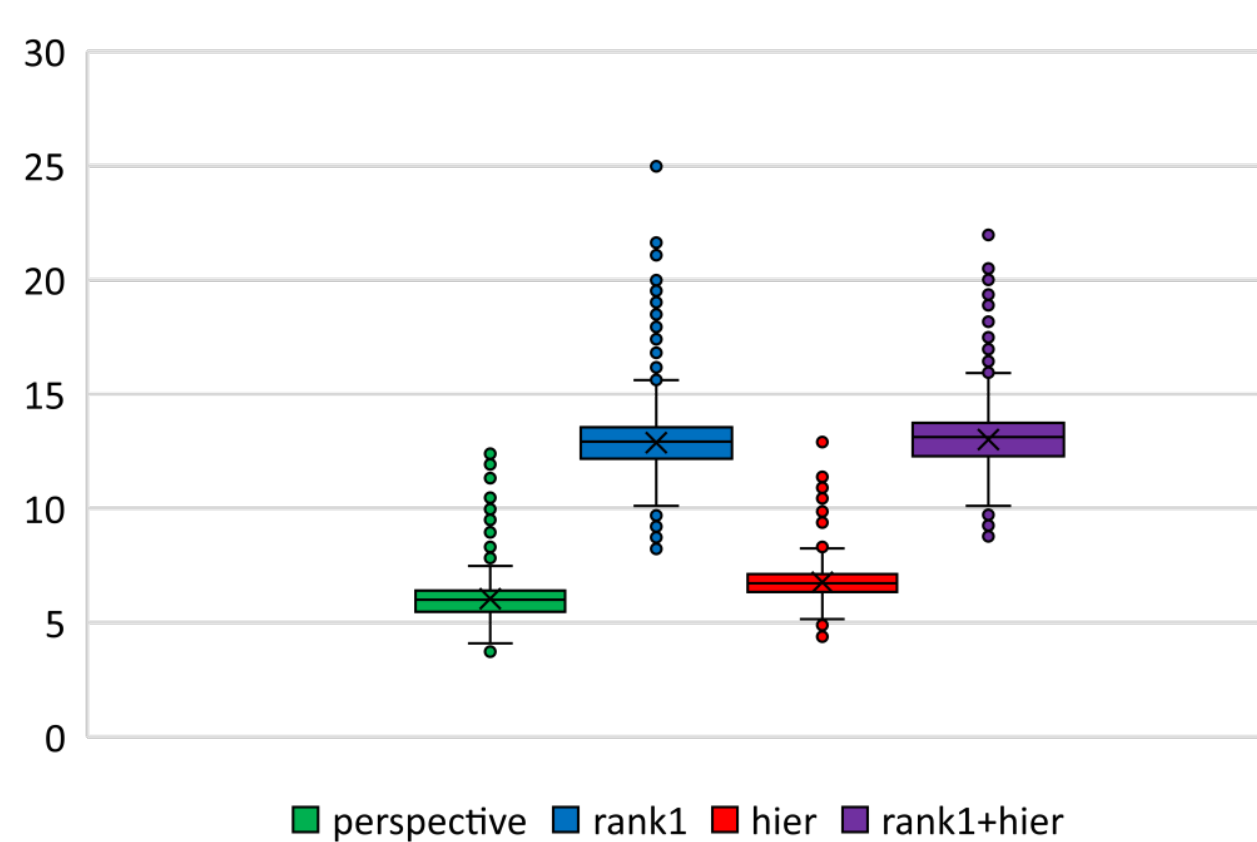}}\hfill
	\subfloat[Wine\_quality]{\includegraphics[width=0.5\textwidth,trim={10.1cm 5.5cm 11cm 5cm},clip]{./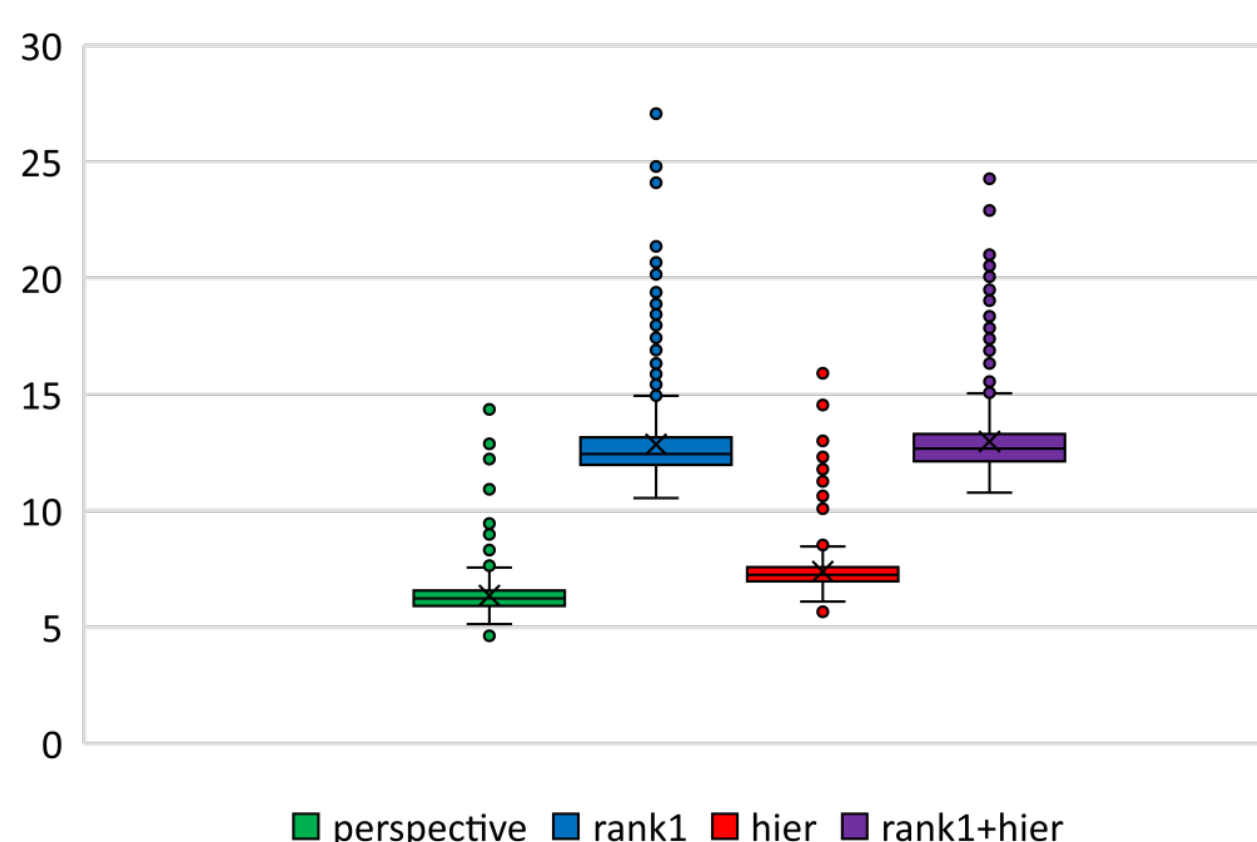}}\hfill\subfloat[Forecasting\_orders]{\includegraphics[width=0.5\textwidth,trim={10.1cm 5.5cm 11cm 5cm},clip]{./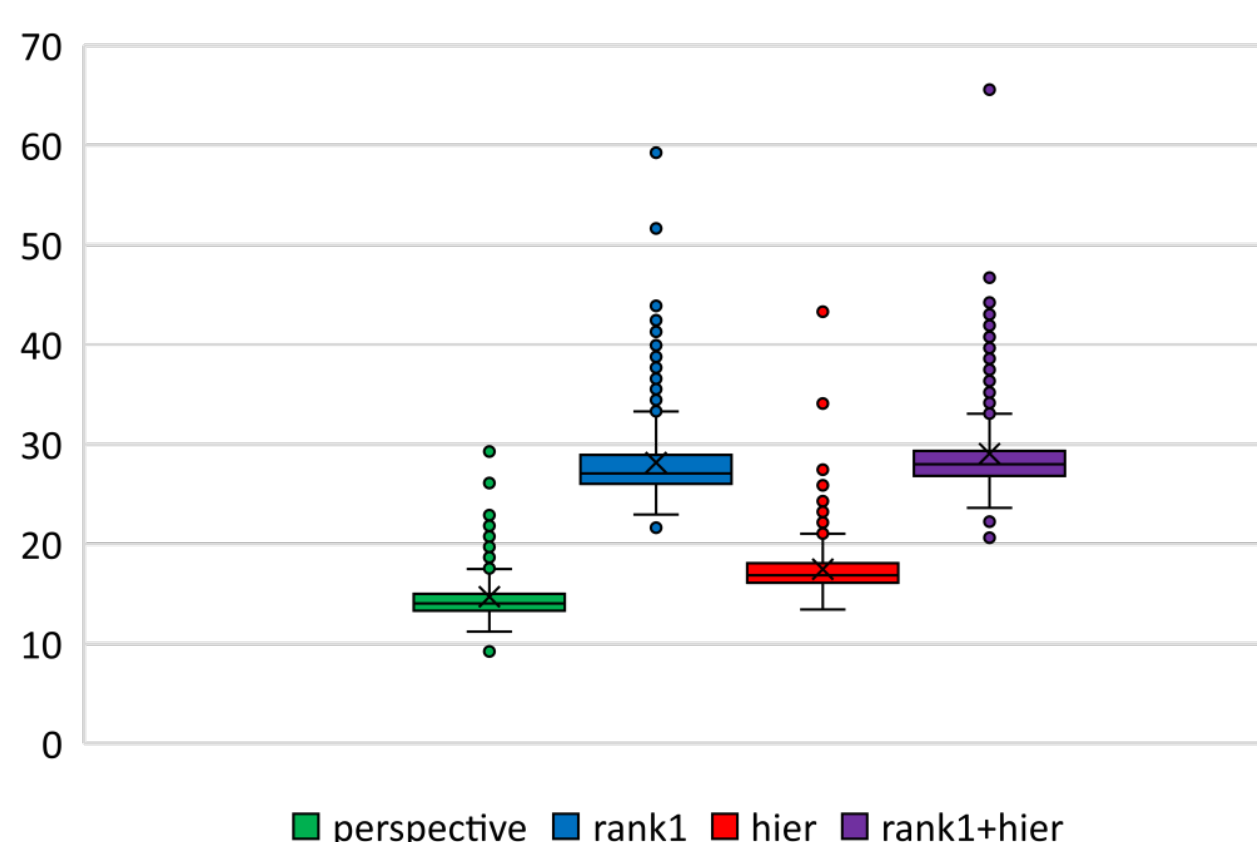}}\hfill\
	\subfloat[Housing]{\includegraphics[width=0.5\textwidth,trim={10.1cm 5.5cm 11cm 5cm},clip]{./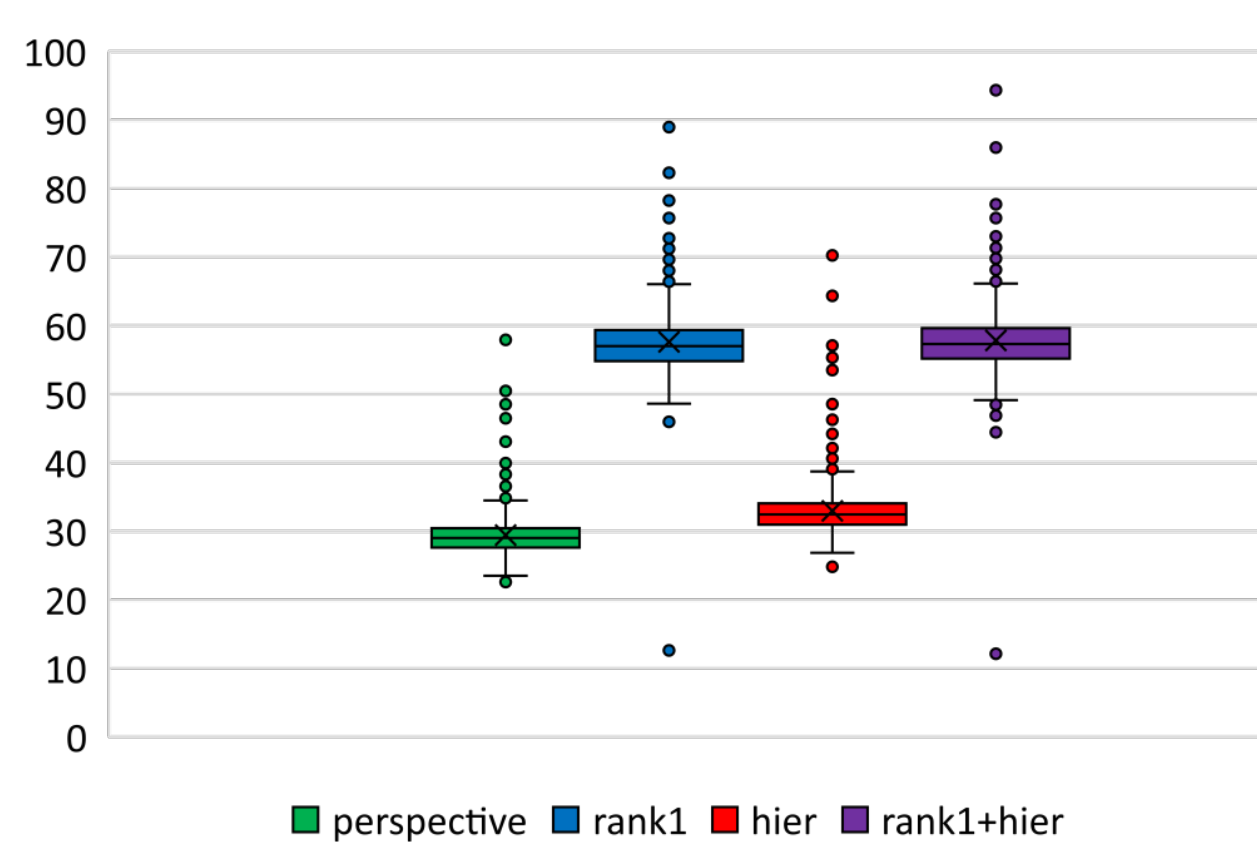}}\hfill
	\subfloat[Bias\_correction]{\includegraphics[width=0.5\textwidth,trim={10.1cm 5.5cm 11cm 5cm},clip]{./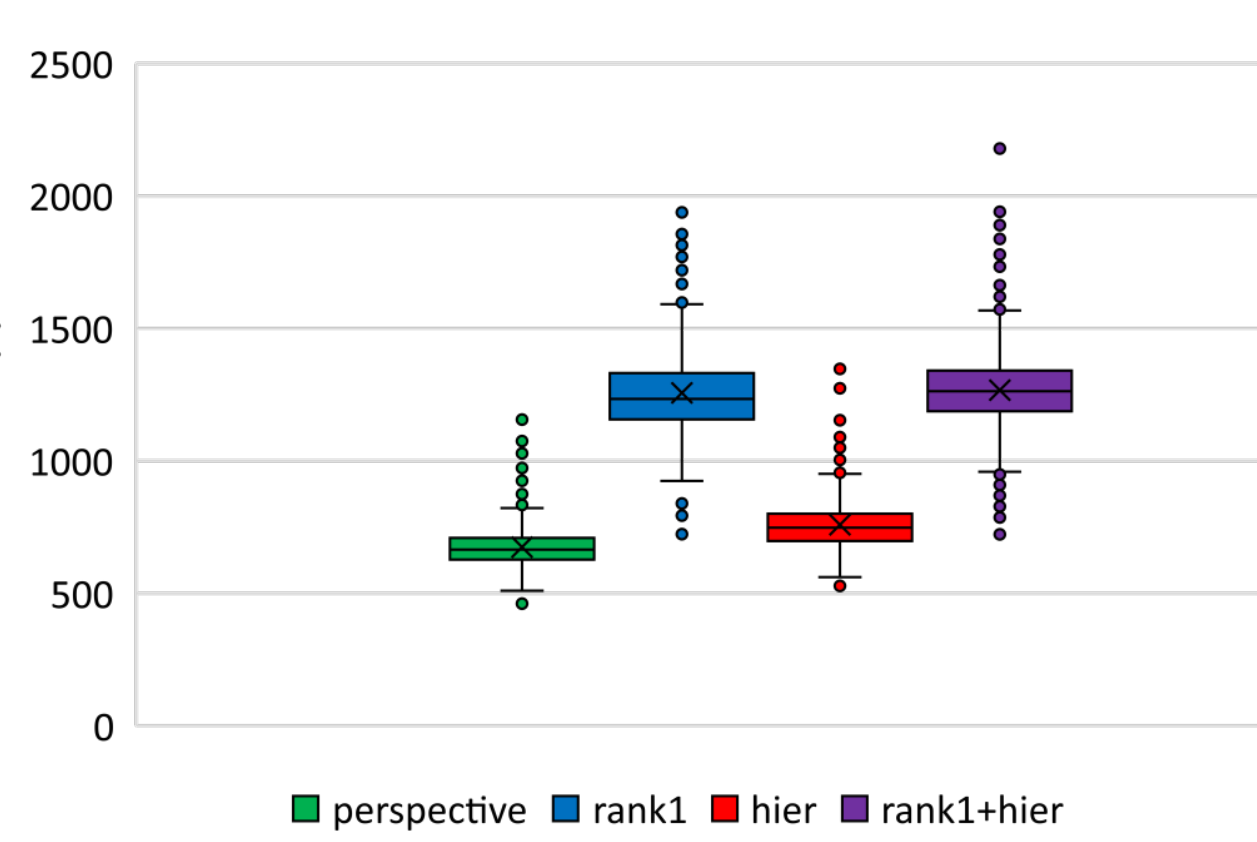}}
	\caption{Computational times in seconds.}
	\label{fig:times}
\end{figure}

Figure~\ref{fig:times} shows the distribution of times needed to solve the regression problems for each dataset. As expected, the \revised{optimal} \textbf{perspective} formulation \eqref{eq:perspective} is the fastest, as it is the simplest relaxation. We also see that formulations involving the rank-one constraints (with or without hierarchical strengthening) are  more computationally demanding, taking four times longer to solve than the perspective formulation in Crime, and twice as long in the \revised{remaining five} instances. In contrast, the formulation \textbf{Hier}, which includes hierarchical constraints but not the rank-one constraints, is much faster, requiring 70\% more time than \textbf{perspective} in the Crime dataset, and only 10\revised{-20}\% more in the other instances. Indeed, there are only $\mathcal{O}(p^2)$ hierarchical constraints to be added, while there are $\mathcal{O}(\left(p(p+3)/2\right)^2)$ rank-one constraints.

\begin{figure}[!h]
	\centering
	\subfloat[Crime]{\includegraphics[width=0.5\textwidth,trim={10.3cm 5.5cm 9cm 5cm},clip]{./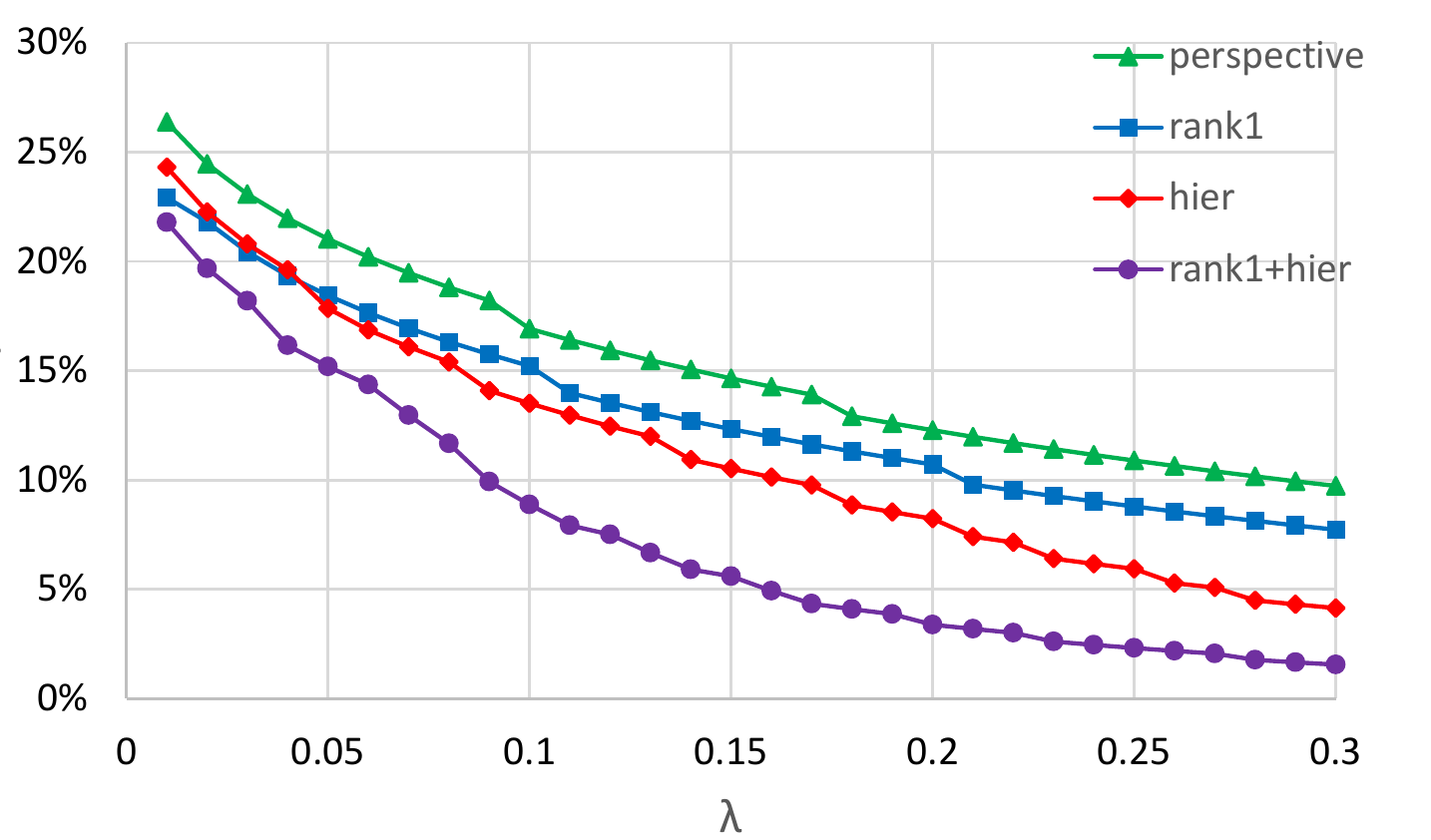}}\hfill\subfloat[Diabetes]{\includegraphics[width=0.5\textwidth,trim={10.3cm 5.5cm 9cm 5cm},clip]{./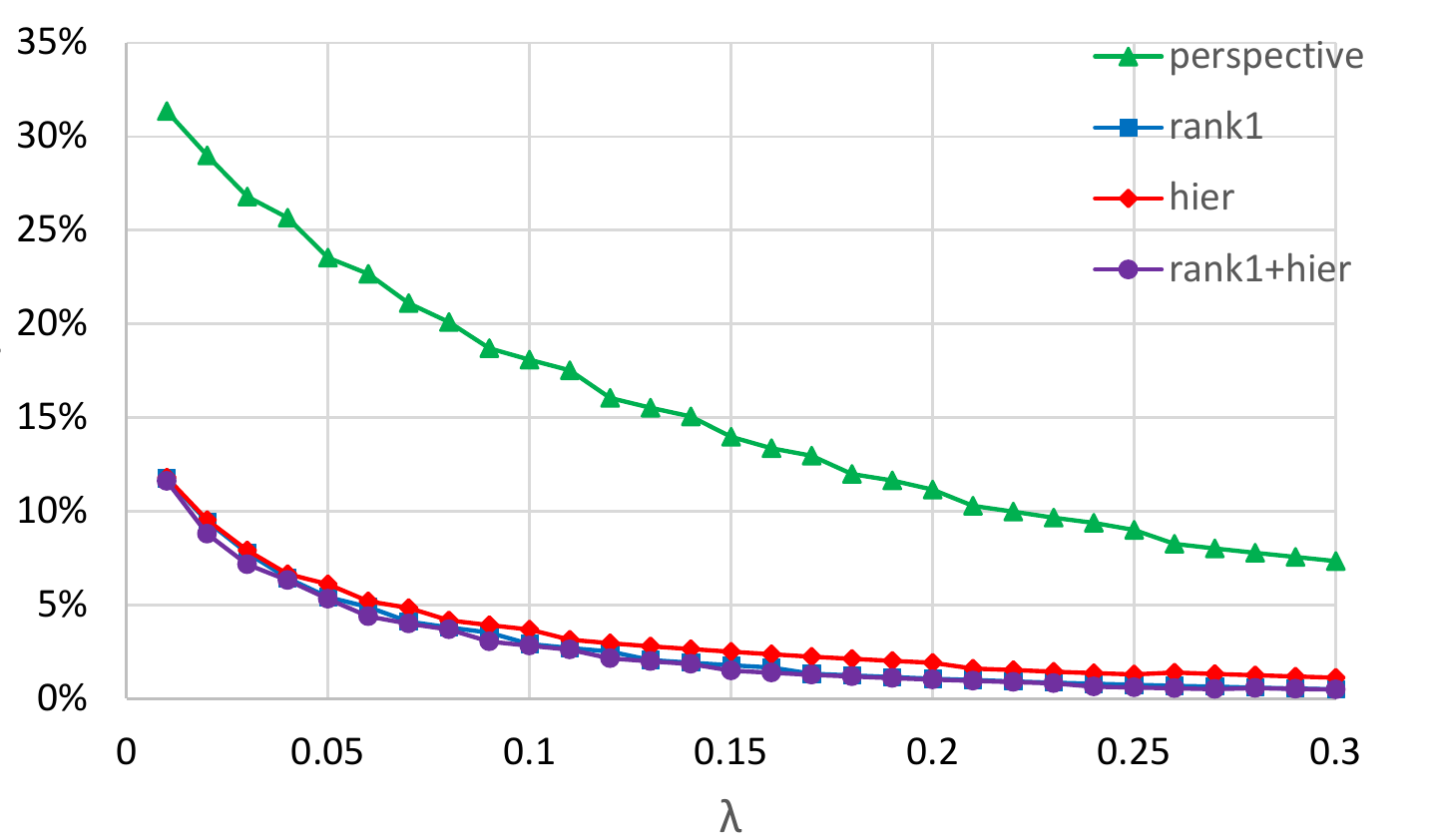}}\hfill
	\subfloat[Wine\_quality]{\includegraphics[width=0.5\textwidth,trim={10.3cm 5.5cm 9cm 5cm},clip]{./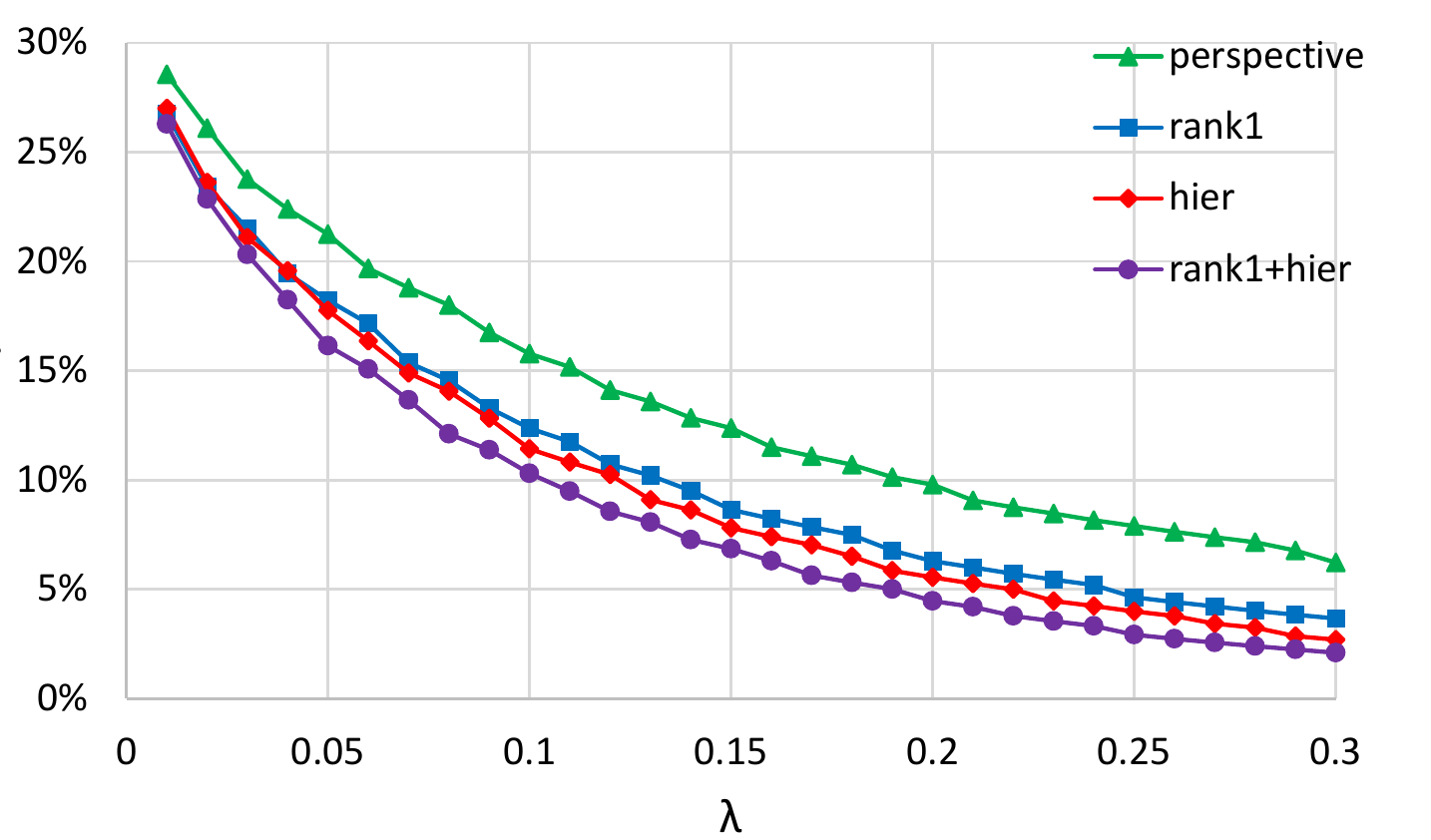}}\hfill\subfloat[Forecasting\_orders]{\includegraphics[width=0.5\textwidth,trim={10.3cm 5.5cm 9cm 5cm},clip]{./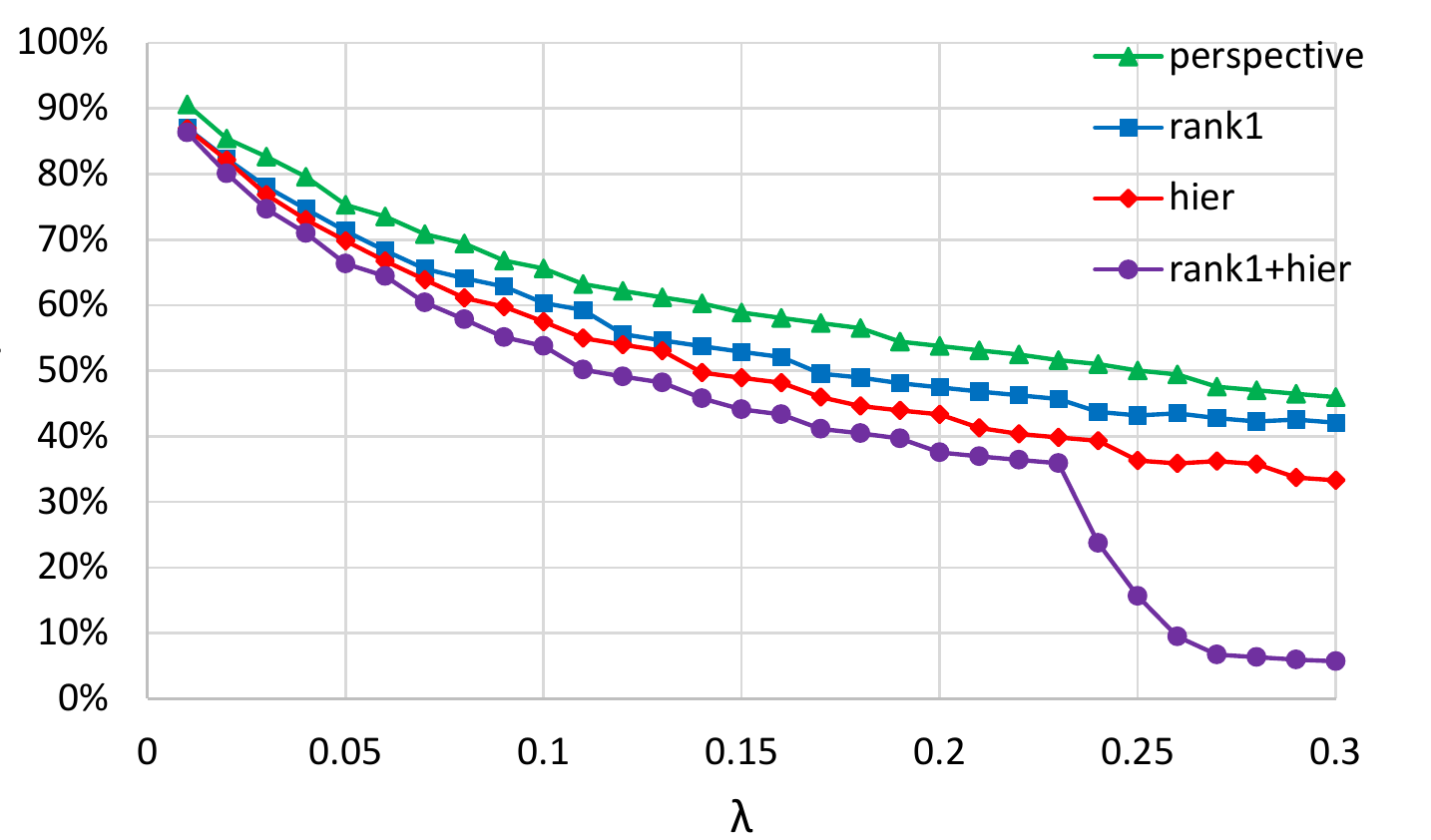}}\hfill
	\subfloat[Housing]{\includegraphics[width=0.5\textwidth,trim={10.3cm 5.5cm 9cm 5cm},clip]{./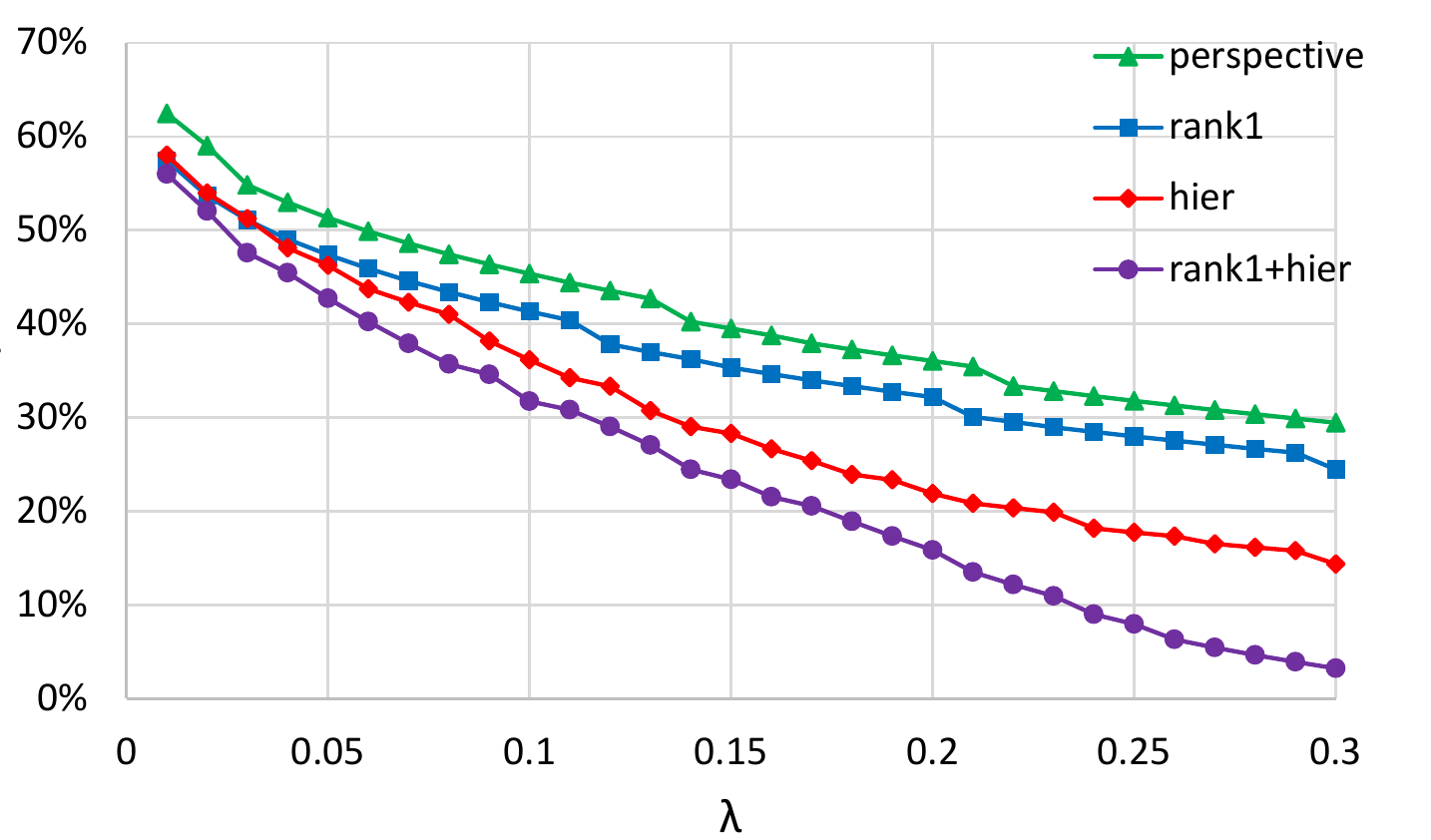}}\hfill
	\subfloat[Bias\_correction]{\includegraphics[width=0.5\textwidth,trim={10.3cm 5.5cm 9cm 5cm},clip]{./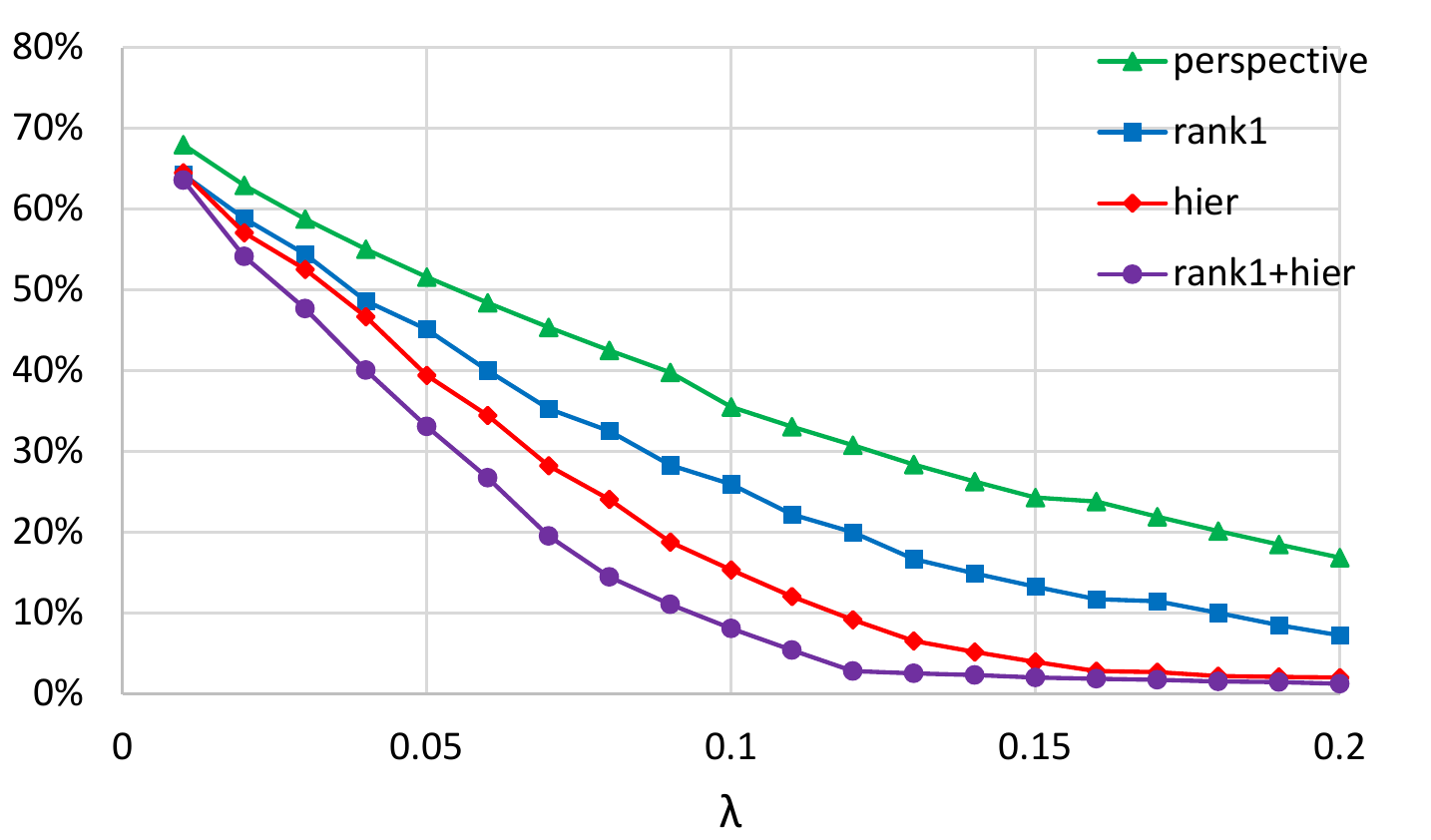}}
	\caption{Optimality gaps as a function of $\lambda$. Each point in the graph represents the average across all values of $\mu$.}
	\label{fig:lambda}
\end{figure}

Figure~\ref{fig:lambda} shows, for each dataset, the average optimality gaps as a function of the regularization parameter $\lambda$. Each point in the graph represents, for a given value of $\lambda$ the average across all 31 values of $\mu$. Similarly, Figure~\ref{fig:mu} shows, for each dataset, the average optimality gap as a function of the regularization parameter $\mu$. As expected, the optimality gaps obtained from the \revised{optimal} perspective reformulation are the largest, as the relaxation \eqref{eq:perspective} is dominated by all the other relaxations used. Moreover, the relaxation \textbf{rank1+hier} results in the smallest gaps, as it dominates every other relaxation used. 
 Finally, neither relaxation \textbf{rank1} nor \textbf{hier} consistently outperforms the other, \revised{although \textbf{hier} results in lower gaps overall in all datasets except  Diabetes.}

\begin{figure}[!h]
	\centering
	\subfloat[Crime]{\includegraphics[width=0.5\textwidth,trim={10.3cm 5.5cm 9cm 5cm},clip]{./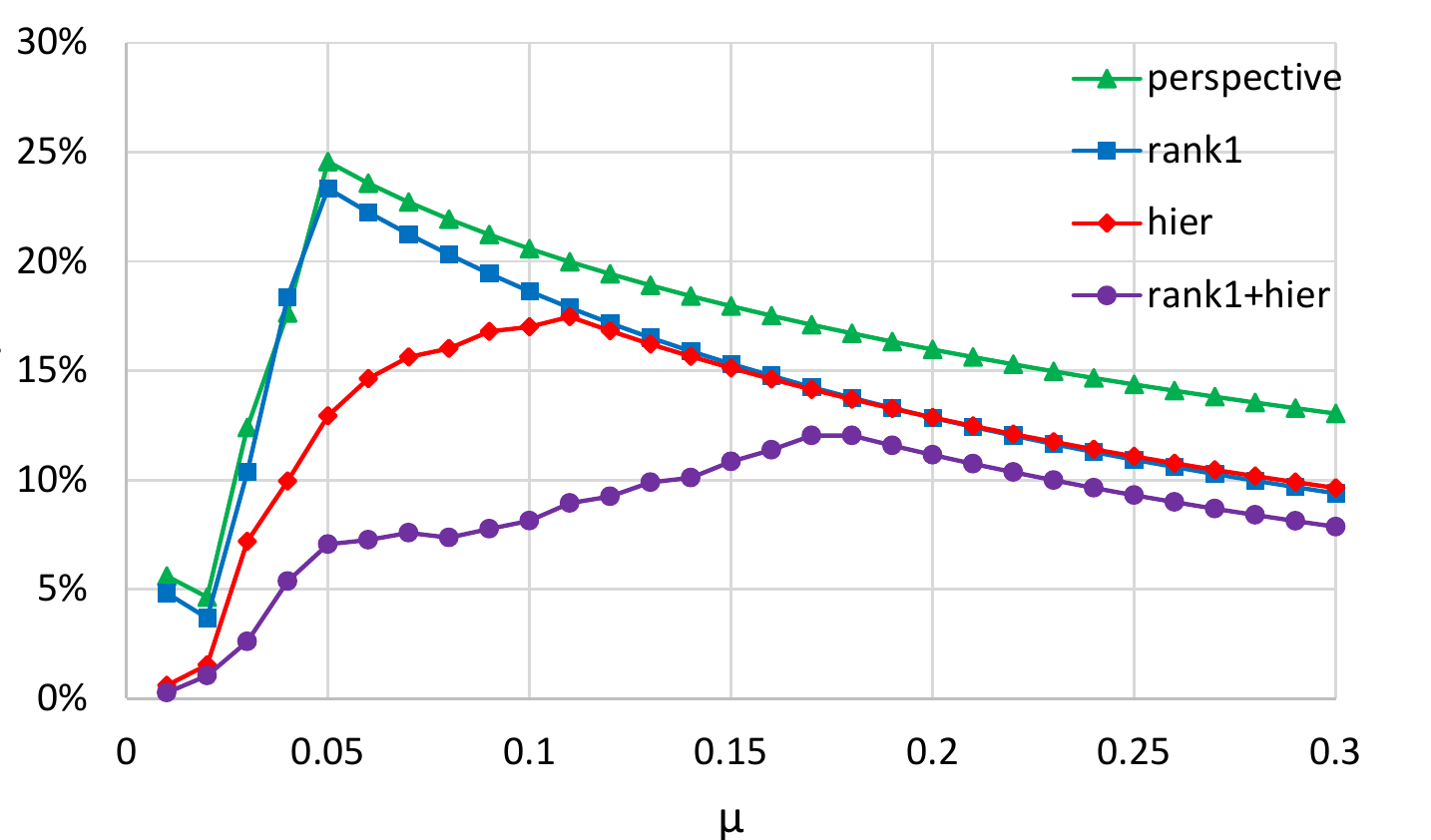}}\hfill\subfloat[Diabetes]{\includegraphics[width=0.5\textwidth,trim={10.3cm 5.5cm 9cm 5cm},clip]{./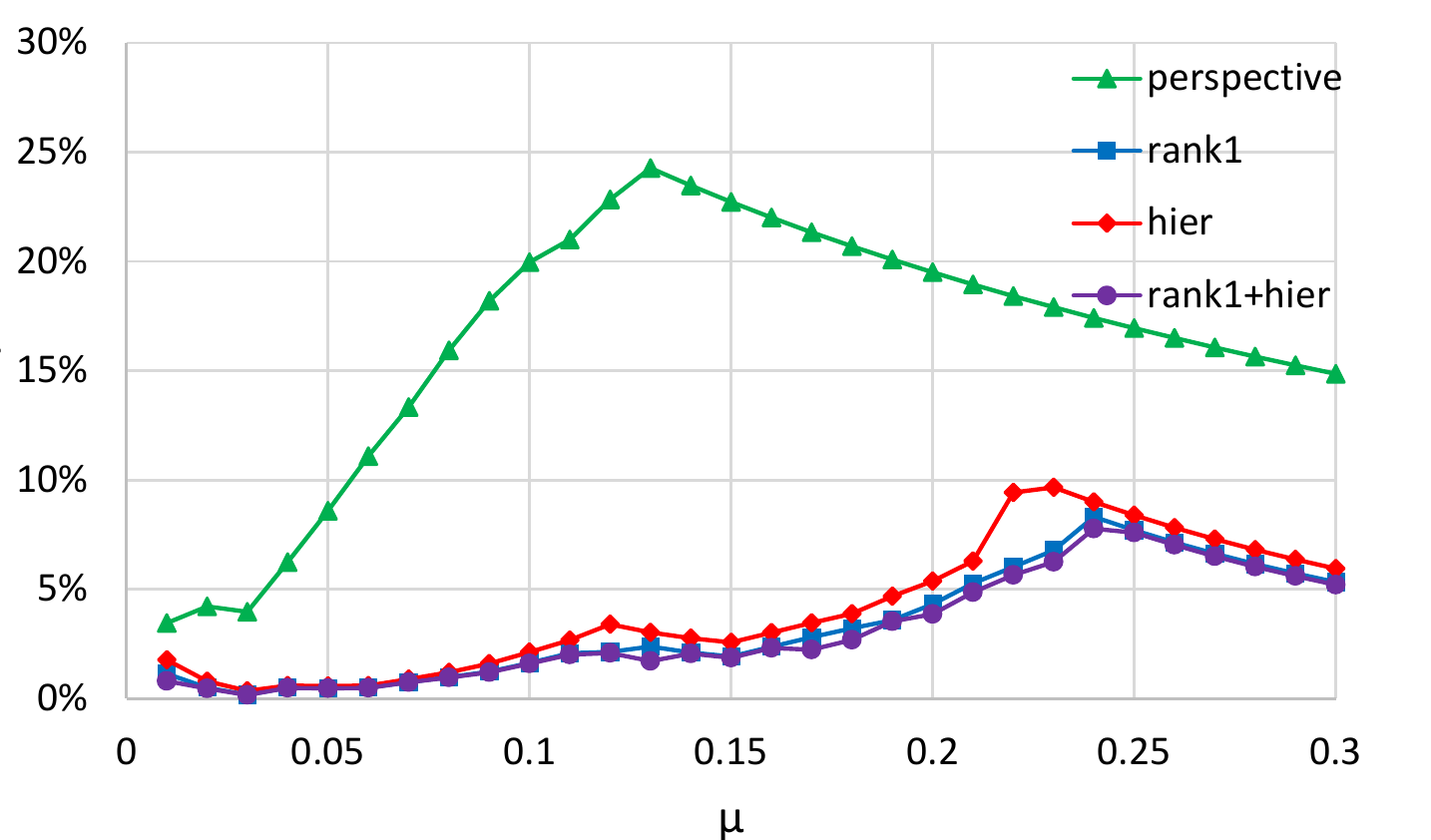}}\hfill
	\subfloat[Wine\_quality]{\includegraphics[width=0.5\textwidth,trim={10.3cm 5.5cm 9cm 5cm},clip]{./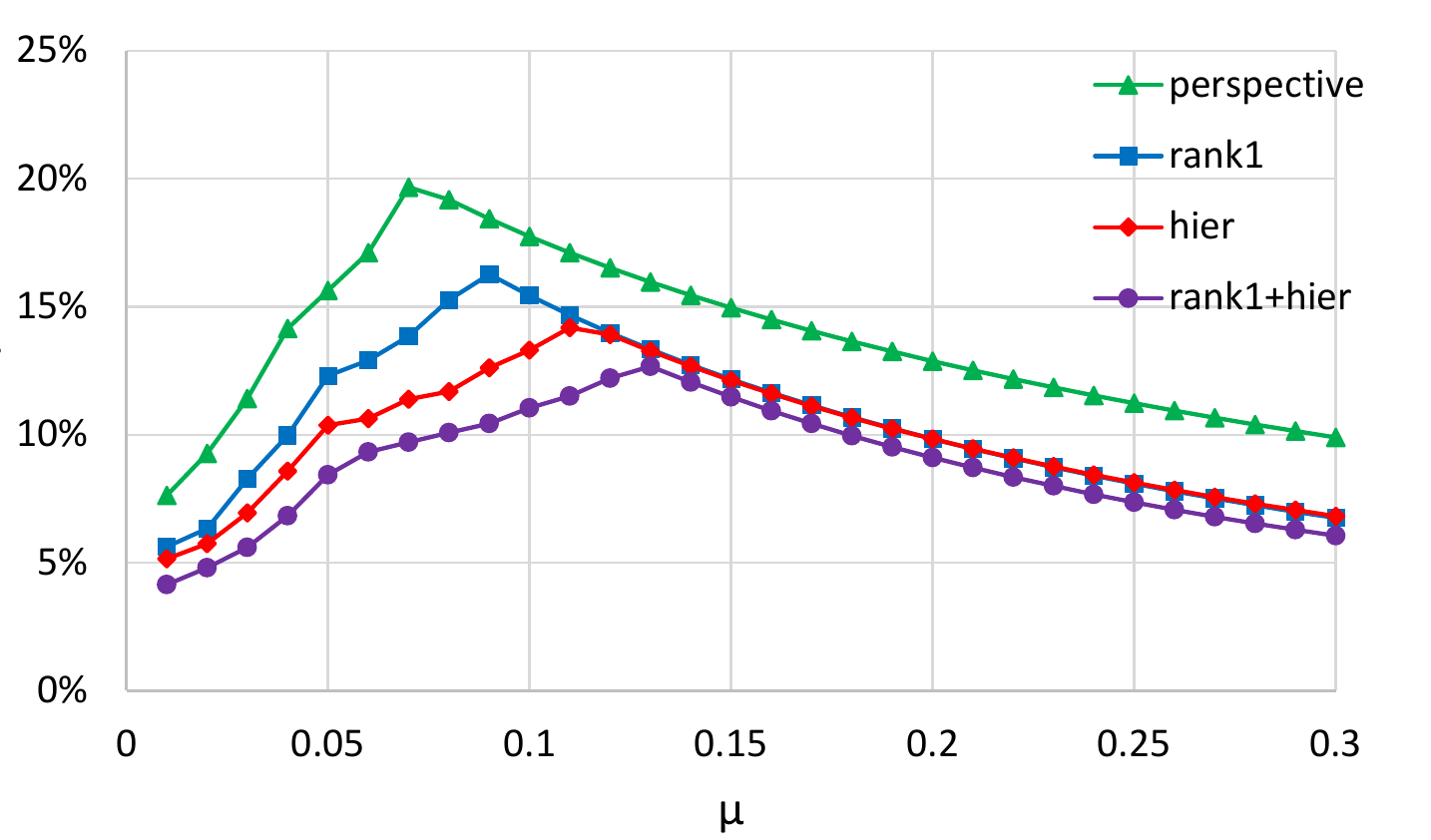}}\hfill\subfloat[Forecasting\_orders]{\includegraphics[width=0.5\textwidth,trim={10.3cm 5.5cm 9cm 5cm},clip]{./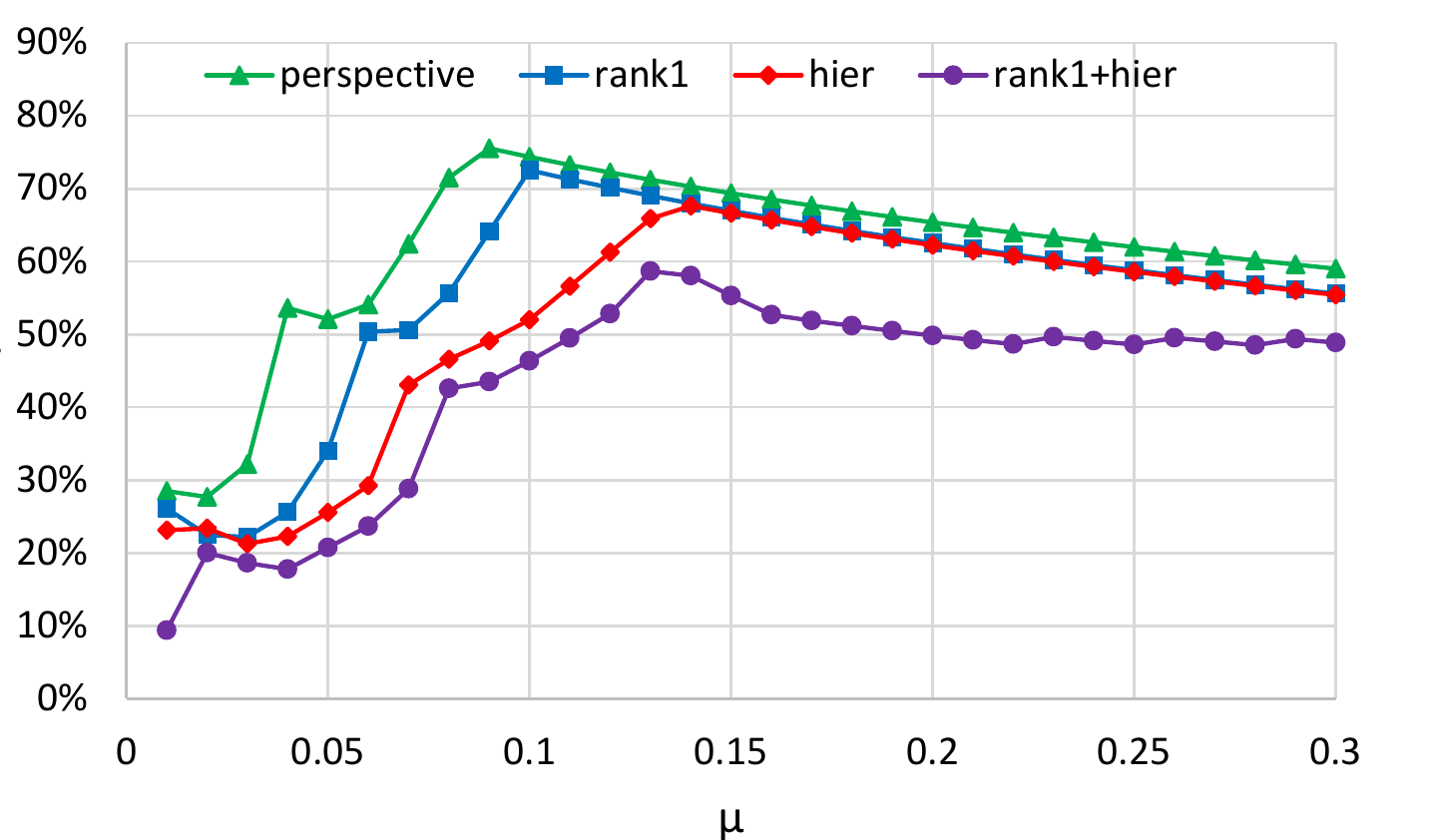}}\hfill
	\subfloat[Housing]{\includegraphics[width=0.5\textwidth,trim={10.3cm 5.5cm 9cm 5cm},clip]{./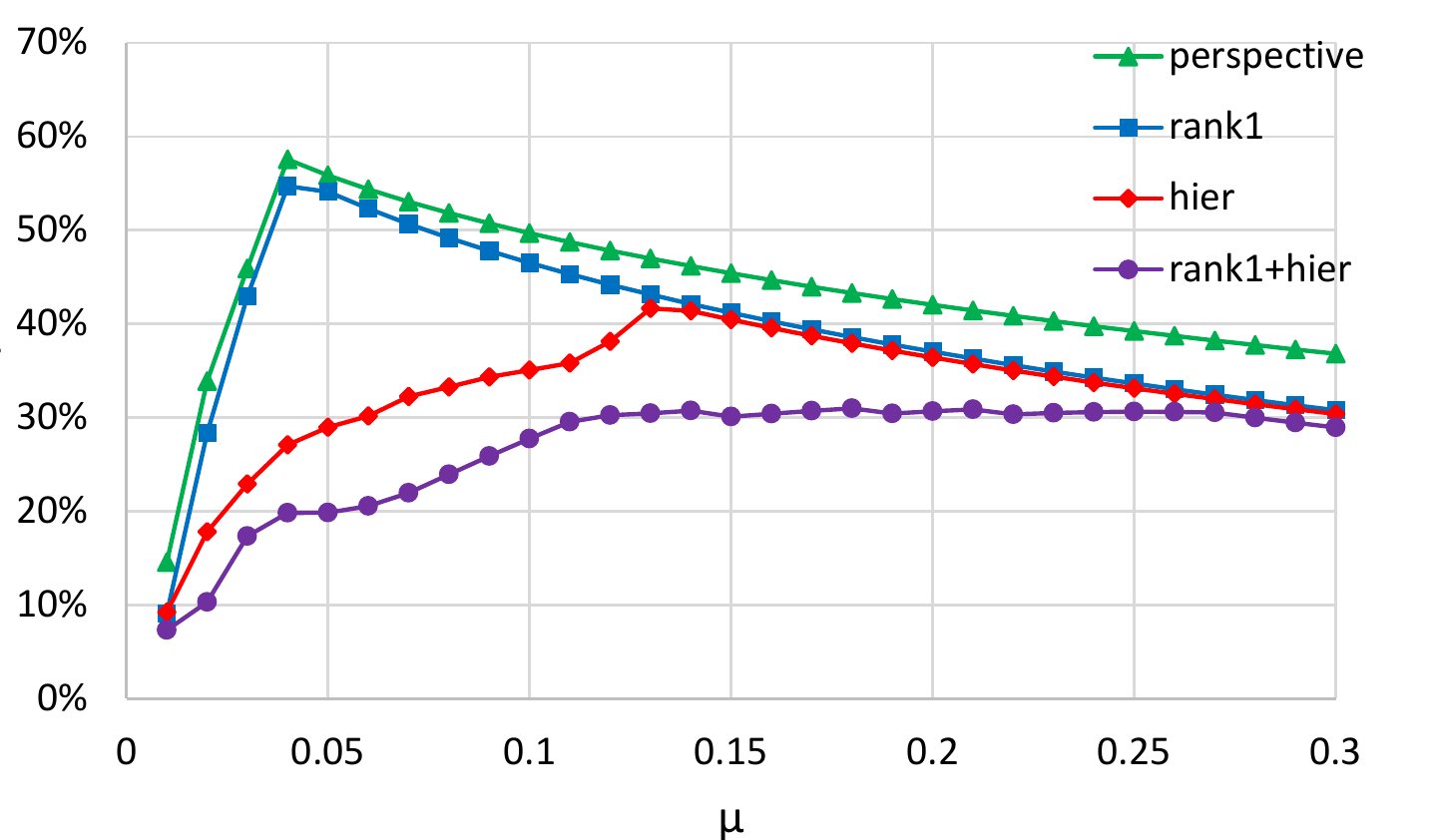}}
	\hfill
	\subfloat[Bias\_correction]{\includegraphics[width=0.5\textwidth,trim={10.3cm 5.5cm 9cm 5cm},clip]{./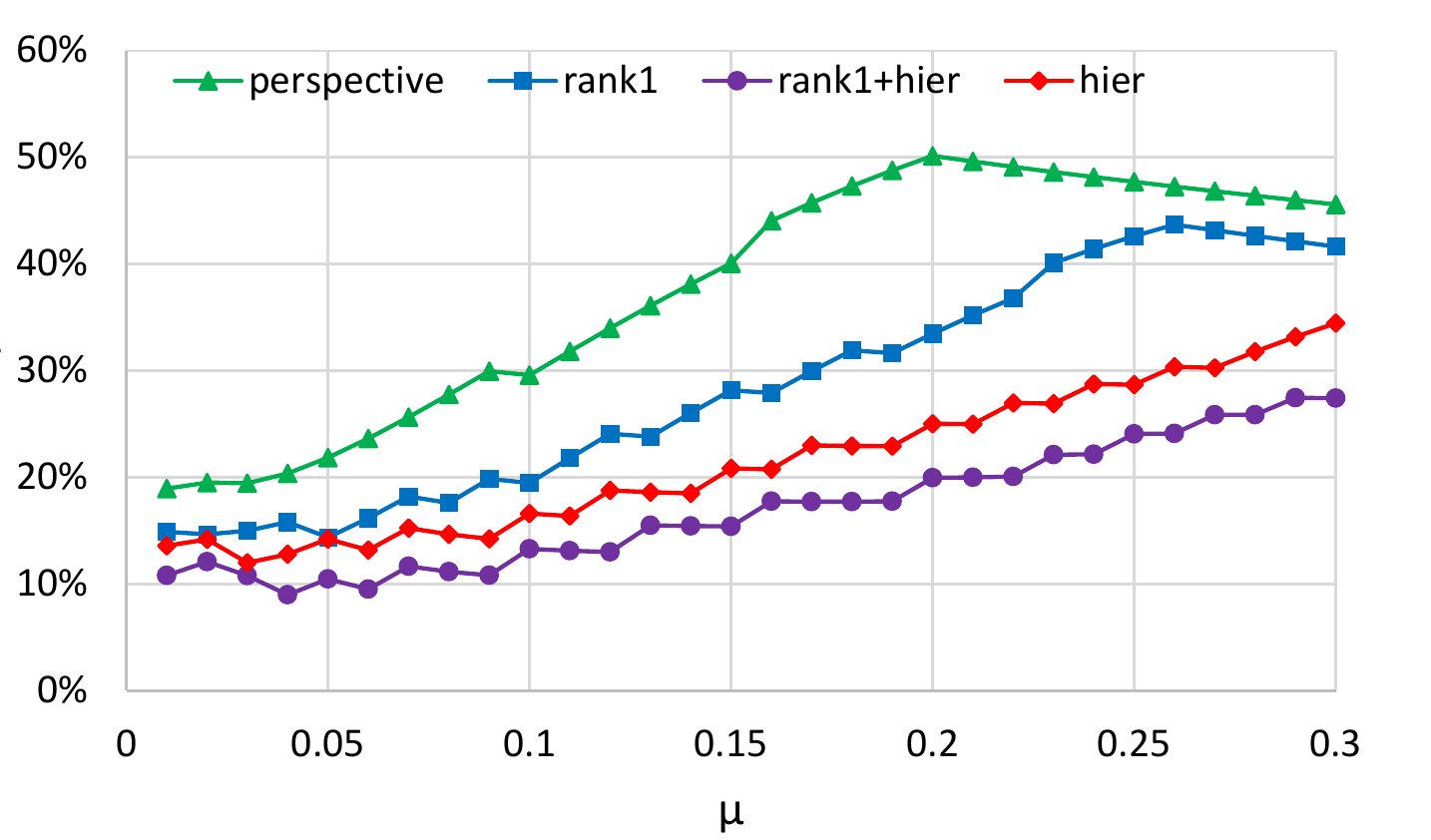}}
	\caption{Optimality gaps as a function of $\mu$. Each point in the graph represents the average across all values of $\lambda$.}
	\label{fig:mu}
\end{figure}

The relative performance of the formulations tested in terms of gap largely depends on the dataset and parameters used. In the Diabetes \revised{and Wine\_quality} datasets, the \textbf{perspective} reformulation is by far the worst, and all other formulations significantly improve upon it. Specifically, \textbf{rank1+hier} is slightly better than \textbf{rank1} \revised{and \textbf{hier} (which have similar strengths)}, but the differences are marginal---observe that \textbf{hier} achieves an almost ideal strengthening with half the computational cost of the other formulations. In contrast, in the Crime, Housing\revised{ and Bias\_correction} datasets, \textbf{rank1} achieves only a marginal improvement over the \textbf{perspective} relaxation, while \textbf{hier} achieves a significant improvement over \textbf{rank1}, and \textbf{rank1+hier} results in a even more substantial improvement. For example, for the Housing dataset, for $\lambda=0.3$ the average  optimality gap of \textbf{perspective} is 29\%, whereas that of  \textbf{rank1+hier}  is 3\%. \revised{Finally, in Forecasting\_orders, all formulations perform similarly for $\lambda\leq 0.23$; however, for $\lambda\geq 0.24$, \textbf{rank1+hier} results in a significant improvement over the other formulations. Note that Forecasting\_orders is a ``fat" dataset with $n<p(p+3)/2$, which is more difficult for convexifications of the form \eqref{eq:perspective} for low values of $\lambda$.  Our conclusions from Figure \ref{fig:mu} are similar. Of particular note is the marked improvement in the optimality gaps for  \textbf{rank1+hier}  over other formulations (especially  \textbf{perspective}) for small $\mu$.  For example, for $\mu=0.05$,  the average optimality gap of   \textbf{rank1+hier} in the  Crime dataset is slightly over 5\%, whereas \textbf{hier} achieves over 10\% gap, and  \textbf{perspective}  and \textbf{rank1}  result in  25\% gap.  
}

Since \textbf{hier} has a similar computational cost as \textbf{perspective}, and \textbf{rank1+hier} has a virtually identical cost as \textbf{rank1}, we see that the hierarchical strengthening may lead to large improvements without drawbacks (whereas \textbf{rank1} requires 2--4 times more computational overhead). Indeed, the hierarchical strengthening is tailored to problem \eqref{eq:hierarchy}, while \textbf{rank1} is more general but does not exploit any structural information from the constraints. 

\rev{
\begin{remark}[On rounding vs mixed-integer optimization] An alternative to the rounding approach used here is to use mixed-integer optimization (MIO) to solve \eqref{eq:hierarchy} exactly. An extensive comparison between
MIO and the \textbf{rank1} approach was performed in \cite{atamturk2019rank} in a variety of real datasets, including the Diabetes dataset used here. In summary, while MIO (using the perspective reformulation) was found to be more effective for large values of the parameter $\lambda$, simple rounding of the \textbf{rank1} relaxation was already sufficient to prove smaller optimality gaps than MIO if $\lambda$ is small. We refer to reader to \cite{atamturk2019rank} for additional details. 
\end{remark}
}

\revised{
\subsection{Sparse Logistic Regression}\label{sec:logistic}

To illustrate the convexification of non-quadratic functions derived in \S\ref{sec:convexification}, we consider $\ell_0$-regularized logistic regression problems. Specifically, given a classification problem with data $(x_i,y_i)_{i=1}^n$ where $x_i\in \R^p$ and $y_i\in\{-1,1\}$, sparse logistic regression calls for solving the problem \cite{dedieu2020learning,sato2016feature}
\begin{subequations}\label{eq:logMIP}
\begin{align}
	\min_{z,\beta} \quad & (1 - \lambda)\sum_{i = 1}^{n} \log\left(1+ \exp(-y_ix_i^\top\beta) \right) + \lambda \sum_{i = 1}^{p} z_i \\
	\text{s.t.} \quad & \beta_i (1 - z_i) = 0,\quad \quad \forall i\in [p],  
\end{align}
\end{subequations}
where $0\leq \lambda \leq 1$ is a regularization coefficient that controls the balance between the error and the $\ell_0$-penalty. Note that the natural convex relaxation of \eqref{eq:logMIP}, obtained by dropping the indicator variables $z$ (or, equivalently, adding big M constraints $|\beta_i|\leq Mz_i$ with $M\to \infty$), is convex. 

To date, there are limited results concerning convexifications of \eqref{eq:logMIP}, especially when compared with the sparse least squares regression problem \eqref{eq:leastsquares}, due to: \textit{(i)} non-existence of separable terms amenable to the perspective relaxation; \textit{(ii)} lack of convexifications for non-separable non-quadratic terms with indicators; and \textit{(iii)} non-decomposability of the objective function into simpler terms, resulting in similar convexifications as those discussed in \S\ref{sec:quadratic}. In this paper we provided the first convexifications for non-quadratic non-separable functions, addressing issue \textit{(ii)}. In this section we illustrate that \emph{if} the observations $x_i$ are sufficiently sparse, then a direct application of Theorem~\ref{theo:hullConnected} results in substantial improvements over the natural relaxation, circumventing issues   \emph{(i)} and \emph{(iii)}.

\subsubsection{Formulations}

A direct application of Theorem~\ref{theo:hullConnected}, corresponding to strengthening each error term $\log\left(1+ \exp(-y_ix_i^\top\beta) \right)\leq t_i$ individually, yields the following ``rank-one" relaxation of the sparse logistic regression problem \eqref{eq:logMIP}:
\begin{subequations}\label{eq:logrel}
	\begin{align}
		\min_{z,\beta,t} \quad & (1 - \lambda)\left(\sum_{i = 1}^{n} t_i + n\log(2)\right) + \lambda \sum_{i = 1}^{p} z_i  \\
		\text{s.t.} \quad & t_i \geq \log(1 + \exp (- y_i x_i^\top \beta)) - \log(2), \quad i \in [n] \label{eq:constraintconic1}\\
	(\textbf{log-rko})\qquad \qquad	& t_i \geq \left(\sum_{j:x_{ij} \neq 0} z_j\right) \log\left(1 + \exp\left(- \frac{ y_i x_i^\top\beta}{\sum_{j:x_{ij} \neq 0} z_j}\right)\right) \notag\\
		&\hspace{2cm}- \left(\sum_{j:x_{ij} \neq 0} z_j\right) \log(2), \quad \hfill i \in [n]  \label{constraintconic2}\\
		& z_i \in [0,1], \quad i \in [n]. 
	\end{align}
\end{subequations}
We can write \eqref{eq:logrel} as a conic optimization problem using the exponential cone
\[
K_{exp} = \{(w_1, w_2, w_3) \; | \; w_1 \geq w_2 e^{w_3 / w_2}, w_2 >0\} \bigcup \{(w_1, 0, w_3) \; | \; w_1 \geq 0, w_3 \leq 0\},
\] 
i.e., the closure of the set of points satisfying $w_1\geq w_2e^{w_3/w_2}$ and $w_1,w_2\geq 0$. 

\noindent Constraint \eqref{eq:constraintconic1} is equivalent to $\exists u^{i}_1, v^{i}_1$ such that :
\begin{align*}
	u^{i}_{1} + v^{i}_{1} \leq 2, \\
	(u^{i}_{1}, 1, - y_i x_i^\top\beta- t_i) \in K_{exp},\\
	(v^{i}_{1}, 1, -t_i) \in K_{exp}. 
\end{align*}
\noindent Similarly,  constraint \eqref{constraintconic2} is equivalent to $\exists u^{i}_2, v^{i}_2$ such that:
\begin{align*}
	u^{i}_{2} + v^{i}_{2} \leq 2(\sum_{j:x_{ij} \neq 0} z_j), \\
	(u^{i}_{2}, \sum_{j:x_{ij} \neq 0} z_j, - y_i x_i^\top \beta - t_i) \in K_{exp}, \\
	(v^{i}_{2}, \sum_{j:x_{ij} \neq 0} z_j, - t_i) \in K_{exp}. 
\end{align*}

We refer to formulation \eqref{eq:logrel} as \textbf{log-rko} in the sequel. We compare it with the natural convex relaxation \eqref{eq:logMIP} \textbf{log-nat}, corresponding to dropping contraints \eqref{constraintconic2} from the formulation. Observe that for relaxation \textbf{log-nat}, $z=\mathbf 0$ in an optimal solution, thus resulting in the same objective value for all values of $\lambda$. 

\subsubsection{Lower bounds}

We report lower bounds found from solving convex relaxations of \eqref{eq:logMIP}.  The optimal values of the relaxations considered are divided by $(1 - \lambda)n \log(2)$. Thus the feasible solution of \eqref{eq:logMIP} obtained by setting $z=\beta=\mathbf 0$ has objective value 1 (this solution may be optimal for large values of $\lambda$). The objective value of \eqref{eq:logMIP} also have a trivial lower bound of $0$, attained if the data can be perfectly classified (observe that if $n<p$ and $\lambda\to 0$, this lower bound may in fact be attained).

\subsubsection{Instances and parameters}
For the synthetic datasets, we consider the case where both the input data and the true model are sparse. Let  $\alpha$ be  a parameter that controls the sparsity of features $(x_i)_{i=1}^n$. For each entry $x_{ij}$ we either independently assign a value of zero with probability $1-\alpha$ or we sample from a standard normal distribution $\mathcal N (0,1)$ with probability $\alpha$.  We generate a ``true" sparse coefficient vector $\beta^*$ with $s$ uniformly sampled non-zero indices such that $\beta^*_i = 1$. The responses $y_i \in \{-1, 1\}$ are then generated independently from a Bernoulli distribution with: $P(y_i = 1 | x_i) = (1 + \exp{(- x_i^\top \beta^*)})^{-1}$. We use regularization values $\lambda \in \{0.1, 0.3, 0.5, 0.7\}$ and sparsity levels $\alpha \in \{0.01, 0.02, 0.05, 0.1\}$. Moreover, we set $p=100$, $s=1$ and test varying sample sizes $n = 50, 100, 200$.

\begin{table*}\centering
	\ra{1.3}
	\caption{Scaled lower bounds for varying $n$, $\lambda$, and $\alpha$. Each entry in the table represents the average across ten random instances.}
	\label{tab:LB}
	\begin{tabular}{@{} rrrrrr@{}}\toprule
		&& $\alpha = 0.01$ & $\alpha = 0.02$ & $\alpha = 0.05$ & $\alpha = 0.1$\\ \midrule
		$n = 50$\\
		&\textbf{log-nat\hfill} & 0.430 & 0.136 & 0.008 & 0 \\ 
		&\textbf{log-rko} $\lambda = 0.1$ & 0.502 & 0.214 & 0.059 & 0.027\\
		&$\lambda = 0.3$ & 0.703 & 0.434 & 0.203 & 0.103\\
		&$\lambda = 0.5$ & 0.92 & 0.726 & 0.441 & 0.239\\
		&$\lambda = 0.7$ & 1 & 0.965 & 0.798 & 0.524\\
		$n = 100$\\
		&\textbf{log-nat\hfill} & 0.392 & 0.164 & 0.003 & 0 \\
		
		&\textbf{log-rko} $\lambda = 0.1$ & 0.454 & 0.221 & 0.036 & 0.017\\
		&$\lambda = 0.3$ & 0.626 & 0.383 & 0.133 & 0.066\\
		&$\lambda = 0.5$ & 0.834 & 0.615 & 0.296 & 0.153\\
		&$\lambda = 0.7$ & 0.985 & 0.893 & 0.588 & 0.340\\
		$n = 200$ \\
		&\textbf{log-nat \hfill} & 0.519 & 0.328 & 0.220 & 0.232\\
		&\textbf{log-rko} $\lambda = 0.1$ & 0.564 & 0.370 & 0.243 & 0.242\\
		&$\lambda = 0.3$ & 0.685 & 0.483 & 0.308 & 0.272\\
		&$\lambda = 0.5$ & 0.837 & 0.644 & 0.414 & 0.323\\
		&$\lambda = 0.7$ & 0.974 & 0.859 & 0.598 & 0.434\\
		\bottomrule
	\end{tabular}
\end{table*}

\subsubsection{Results}
Table~\ref{tab:LB} shows the scaled lower bounds obtained via convex relaxations \textbf{log-nat} and \textbf{log-rko}.
Each entry in the table corresponds to the average (over ten replications) lower bound obtained from a given relaxation for a particular combination of sparsity level $\alpha,\lambda$, and number of observations $n$. Recall that \textbf{log-nat} results in the same objective regardless of the value of $\lambda$.

\noindent Compared with the natural relaxation of sparse logistic regression, the lower bound attained by  \eqref{eq:logrel} increases significantly when $n \leq p$.  Moreover, as expected, larger improvements of \textbf{log-rko} over \textbf{log-nat} are obtained for larger values of $\lambda$, where sparsity plays a more prominent role in the objective value. The lower bounds of \textbf{log-rko} are at least $16\%$ more than those of \textbf{log-nat} in all test cases, and sometimes substantially larger (e.g., in cases \textbf{log-nat} results in the trivial lower bound of $0$). When the input data is very sparse, i.e., $\alpha=0.01$ and $\lambda \ge 0.5$, \textbf{log-rko} results in lower bounds close to $1$ or equal to $1$, suggesting (and in some cases proving) that true optimal solution in those cases is $z=\beta=\mathbf 0$. When $n > p$, \textbf{log-rko} still results in better lower bounds than \textbf{log-nat}, although improvements are less pronounced in these cases.  
}

\section{Conclusions}
In this paper, we propose a unifying convexification technique for the epigraphs of a class of convex functions with indicator variables constrained to certain polyhedral sets. We illustrate the utility of our approaches on constrained regression problems of recent interest. Our results generalize the existing results that consider only quadratic, separable or differentiable convex functions, and certain structural constraints such as cardinality or unit commitment.
 As future research, we plan to consider convexifications for more general  functions. 

\section*{Acknowledgments}

We thank the AE and two referees whose comments expanded  and improved our computational study, and also led to the result  in Appendix \ref{sec:app}. This research is supported, in part, by ONR grant N00014-19-1-2321, and NSF grants   1818700, 2006762, and 2007814. A preliminary version of this work appeared in \citet{WGK20}.

\bibliographystyle{apalike}
\bibliography{Bibliography}

\begin{thebibliography}{}

\bibitem[Akt{\"u}rk et~al., 2009]{akturk2009strong}
Akt{\"u}rk, M.~S., Atamt{\"u}rk, A., and G{\"u}rel, S. (2009).
\newblock A strong conic quadratic reformulation for machine-job assignment
  with controllable processing times.
\newblock {\em Operations Research Letters}, 37(3):187--191.

\bibitem[Angulo et~al., 2015]{forbidden2015}
Angulo, G., Ahmed, S., Dey, S.~S., and Kaibel, V. (2015).
\newblock Forbidden vertices.
\newblock {\em Mathematics of Operations Research}, 40(2):350--360.

\bibitem[Anstreicher, 2012]{anstreicher2012convex}
Anstreicher, K.~M. (2012).
\newblock On convex relaxations for quadratically constrained quadratic
  programming.
\newblock {\em Mathematical Programming}, 136(2):233--251.

\bibitem[Atamt{\"u}rk and G{\'o}mez, 2018]{atamturk2018strong}
Atamt{\"u}rk, A. and G{\'o}mez, A. (2018).
\newblock Strong formulations for quadratic optimization with {M-matrices} and
  indicator variables.
\newblock {\em Mathematical Programming}, 170(1):141--176.

\bibitem[Atamt\"urk and G{\'o}mez, 2019]{atamturk2019rank}
Atamt\"urk, A. and G{\'o}mez, A. (2019).
\newblock Rank-one convexification for sparse regression.
\newblock {\em Optimization Online}.
\newblock \url{http://www.optimization-online.org/DB\_HTML/2019/01/7050.html}.

\bibitem[Atamt\"urk et~al., 2021]{atamturk2018sparse}
Atamt\"urk, A., G{\'o}mez, A., and Han, S. (2021).
\newblock Sparse and smooth signal estimation: Convexification of {L0}
  formulations.
\newblock {\em Journal of Machine Learning Research}, 3:1--43.

\bibitem[Bacci et~al., 2019]{baccinew2019}
Bacci, T., Frangioni, A., Gentile, C., and Tavlaridis-Gyparakis, K. (2019).
\newblock New {MINLP} formulations for the unit commitment problems with
  ramping constraints.
\newblock {\em Optimization Online}.
\newblock \url{http://www.optimization-online.org/DB\_FILE/2019/10/7426.pdf}.

\bibitem[Belotti et~al., 2015]{belotti2015conic}
Belotti, P., G{\'o}ez, J.~C., P{\'o}lik, I., Ralphs, T.~K., and Terlaky, T.
  (2015).
\newblock A conic representation of the convex hull of disjunctive sets and
  conic cuts for integer second order cone optimization.
\newblock In {\em Numerical Analysis and Optimization}, pages 1--35. Springer.

\bibitem[Bertsimas et~al., 2020a]{bertsimas2020mixed}
Bertsimas, D., Cory-Wright, R., and Pauphilet, J. (2020a).
\newblock Mixed-projection conic optimization: A new paradigm for modeling rank
  constraints.
\newblock {\em arXiv preprint arXiv:2009.10395}.

\bibitem[Bertsimas and King, 2016]{bertsimas2016}
Bertsimas, D. and King, A. (2016).
\newblock {OR Forum} -- {An} algorithmic approach to linear regression.
\newblock {\em Operations Research}, 64(1):2--16.

\bibitem[Bertsimas et~al., 2016]{bertsimas2016best}
Bertsimas, D., King, A., and Mazumder, R. (2016).
\newblock Best subset selection via a modern optimization lens.
\newblock {\em The Annals of Statistics}, 44(2):813--852.

\bibitem[Bertsimas et~al., 2020b]{bertsimas2020sparse}
Bertsimas, D., Pauphilet, J., Van~Parys, B., et~al. (2020b).
\newblock Sparse regression: Scalable algorithms and empirical performance.
\newblock {\em Statistical Science}, 35(4):555--578.

\bibitem[Bertsimas and Van~Parys, 2017]{bertsimas2017sparse}
Bertsimas, D. and Van~Parys, B. (2017).
\newblock Sparse high-dimensional regression: Exact scalable algorithms and
  phase transitions.
\newblock {\em arXiv preprint arXiv:1709.10029}.

\bibitem[Bien et~al., 2013]{bien2013lasso}
Bien, J., Taylor, J., and Tibshirani, R. (2013).
\newblock A lasso for hierarchical interactions.
\newblock {\em Annals of Statistics}, 41(3):1111.

\bibitem[Bienstock and Michalka, 2014]{bienstock2014cutting}
Bienstock, D. and Michalka, A. (2014).
\newblock Cutting-planes for optimization of convex functions over nonconvex
  sets.
\newblock {\em SIAM Journal on Optimization}, 24(2):643--677.

\bibitem[Burer, 2009]{Burer2009}
Burer, S. (2009).
\newblock On the copositive representation of binary and continuous nonconvex
  quadratic programs.
\newblock {\em Mathematical Programming}, 120(2):479--495.

\bibitem[Burer and K{\i}l{\i}n{\c{c}}-Karzan, 2017]{burer2017convexify}
Burer, S. and K{\i}l{\i}n{\c{c}}-Karzan, F. (2017).
\newblock How to convexify the intersection of a second order cone and a
  nonconvex quadratic.
\newblock {\em Mathematical Programming}, 162(1-2):393--429.

\bibitem[Carrizosa et~al., 2020]{carrizosa2020linear}
Carrizosa, E., Mortensen, L., and Morales, D.~R. (2020).
\newblock On linear regression models with hierarchical categorical variables.
\newblock Technical report.

\bibitem[Ceria and Soares, 1999]{ceria1999convex}
Ceria, S. and Soares, J. (1999).
\newblock Convex programming for disjunctive convex optimization.
\newblock {\em Mathematical Programming}, 86:595--614.

\bibitem[Cozad et~al., 2014]{cozad2014learning}
Cozad, A., Sahinidis, N.~V., and Miller, D.~C. (2014).
\newblock Learning surrogate models for simulation-based optimization.
\newblock {\em AIChE Journal}, 60(6):2211--2227.

\bibitem[Cozad et~al., 2015]{cozad2015combined}
Cozad, A., Sahinidis, N.~V., and Miller, D.~C. (2015).
\newblock A combined first-principles and data-driven approach to model
  building.
\newblock {\em Computers \& Chemical Engineering}, 73:116--127.

\bibitem[Dedieu et~al., 2020]{dedieu2020learning}
Dedieu, A., Hazimeh, H., and Mazumder, R. (2020).
\newblock Learning sparse classifiers: Continuous and mixed integer
  optimization perspectives.
\newblock {\em arXiv preprint arXiv:2001.06471}.

\bibitem[Dheeru and Karra~Taniskidou, 2017]{Dua:2017}
Dheeru, D. and Karra~Taniskidou, E. (2017).
\newblock {UCI} machine learning repository.

\bibitem[Dong, 2019]{dong2019integer}
Dong, H. (2019).
\newblock On integer and {MPCC} representability of affine sparsity.
\newblock {\em Operations Research Letters}, 47(3):208--212.

\bibitem[Dong et~al., 2019]{dong2019structural}
Dong, H., Ahn, M., and Pang, J.-S. (2019).
\newblock Structural properties of affine sparsity constraints.
\newblock {\em Mathematical Programming}, 176(1-2):95--135.

\bibitem[Dong et~al., 2015]{dong2015regularization}
Dong, H., Chen, K., and Linderoth, J. (2015).
\newblock Regularization vs. relaxation: A conic optimization perspective of
  statistical variable selection.
\newblock {\em arXiv preprint arXiv:1510.06083}.

\bibitem[Dong and Linderoth, 2013]{dong2013valid}
Dong, H. and Linderoth, J. (2013).
\newblock On valid inequalities for quadratic programming with continuous
  variables and binary indicators.
\newblock In Goemans, M. and Correa, J., editors, {\em Integer Programming and
  Combinatorial Optimization}, pages 169--180, Berlin, Heidelberg. Springer.

\bibitem[Efron et~al., 2004]{efron2004least}
Efron, B., Hastie, T., Johnstone, I., and Tibshirani, R. (2004).
\newblock Least angle regression.
\newblock {\em The Annals of Statistics}, 32(2):407--499.

\bibitem[Fan and Li, 2001]{fan2001variable}
Fan, J. and Li, R. (2001).
\newblock Variable selection via nonconcave penalized likelihood and its oracle
  properties.
\newblock {\em Journal of the American Statistical Association},
  96(456):1348--1360.

\bibitem[Frangioni et~al., 2016]{frangioni2016approximated}
Frangioni, A., Furini, F., and Gentile, C. (2016).
\newblock Approximated perspective relaxations: a project and lift approach.
\newblock {\em Computational Optimization and Applications}, 63(3):705--735.

\bibitem[Frangioni and Gentile, 2006]{frangioni2006perspective}
Frangioni, A. and Gentile, C. (2006).
\newblock Perspective cuts for a class of convex 0--1 mixed integer programs.
\newblock {\em Mathematical Programming}, 106:225--236.

\bibitem[Frangioni and Gentile, 2007]{frangioni2007sdp}
Frangioni, A. and Gentile, C. (2007).
\newblock {SDP} diagonalizations and perspective cuts for a class of
  nonseparable {MIQP}.
\newblock {\em Operations Research Letters}, 35(2):181--185.

\bibitem[Frangioni and Gentile, 2009]{frangioni2009computational}
Frangioni, A. and Gentile, C. (2009).
\newblock A computational comparison of reformulations of the perspective
  relaxation: {SOCP} vs.\ cutting planes.
\newblock {\em Operations Research Letters}, 37(3):206--210.

\bibitem[Frangioni et~al., 2011]{frangioni2011projected}
Frangioni, A., Gentile, C., Grande, E., and Pacifici, A. (2011).
\newblock Projected perspective reformulations with applications in design
  problems.
\newblock {\em Operations Research}, 59(5):1225--1232.

\bibitem[Frangioni et~al., 2020]{frangioni2019decompositions}
Frangioni, A., Gentile, C., and Hungerford, J. (2020).
\newblock Decompositions of semidefinite matrices and the perspective
  reformulation of nonseparable quadratic programs.
\newblock {\em Mathematics of Operations Research}, 45(1):15--33.

\bibitem[G{\"u}nl{\"u}k and Linderoth, 2010]{gunluk2010perspective}
G{\"u}nl{\"u}k, O. and Linderoth, J. (2010).
\newblock Perspective reformulations of mixed integer nonlinear programs with
  indicator variables.
\newblock {\em Mathematical Programming}, 124:183--205.

\bibitem[Han et~al., 2020]{han20202x2}
Han, S., G{\'o}mez, A., and Atamt{\"u}rk, A. (2020).
\newblock 2x2 convexifications for convex quadratic optimization with indicator
  variables.
\newblock {\em arXiv preprint arXiv:2004.07448}.

\bibitem[Hardy, 1908]{hardy2018course}
Hardy, G.~H. (1908).
\newblock {\em Course of Pure Mathematics}.
\newblock Courier Dover Publications.

\bibitem[Hastie et~al., 2015]{hastie2015statistical}
Hastie, T., Tibshirani, R., and Wainwright, M. (2015).
\newblock {\em Statistical Learning with Sparsity: The Lasso and
  Generalizations}.
\newblock Monographs on statistics and applied probability, no. 143. Chapman
  and Hall/CRC.

\bibitem[Hazimeh and Mazumder, 2020]{hazimeh2019learning}
Hazimeh, H. and Mazumder, R. (2020).
\newblock Learning hierarchical interactions at scale: A convex optimization
  approach.
\newblock In Chiappa, S. and Calandra, R., editors, {\em Proceedings of the
  Twenty Third International Conference on Artificial Intelligence and
  Statistics}, volume 108 of {\em Proceedings of Machine Learning Research},
  pages 1833--1843. PMLR.

\bibitem[Hazimeh et~al., 2020]{hazimeh2020sparse}
Hazimeh, H., Mazumder, R., and Saab, A. (2020).
\newblock Sparse regression at scale: Branch-and-bound rooted in first-order
  optimization.
\newblock {\em arXiv preprint arXiv:2004.06152}.

\bibitem[Hijazi et~al., 2012]{hijazi2012mixed}
Hijazi, H., Bonami, P., Cornu{\'e}jols, G., and Ouorou, A. (2012).
\newblock Mixed-integer nonlinear programs featuring “on/off” constraints.
\newblock {\em Computational Optimization and Applications}, 52(2):537--558.

\bibitem[Huang et~al., 2012]{huang2012selective}
Huang, J., Breheny, P., and Ma, S. (2012).
\newblock A selective review of group selection in high-dimensional models.
\newblock {\em Statistical science: {A} Review Journal of the Institute of
  Mathematical Statistics}, 27(4).

\bibitem[Jeon et~al., 2017]{jeon2017quadratic}
Jeon, H., Linderoth, J., and Miller, A. (2017).
\newblock Quadratic cone cutting surfaces for quadratic programs with on--off
  constraints.
\newblock {\em Discrete Optimization}, 24:32--50.

\bibitem[K{\i}l{\i}n{\c{c}}-Karzan and Y{\i}ld{\i}z, 2014]{kilincc2014two}
K{\i}l{\i}n{\c{c}}-Karzan, F. and Y{\i}ld{\i}z, S. (2014).
\newblock Two-term disjunctions on the second-order cone.
\newblock In {\em International Conference on Integer Programming and
  Combinatorial Optimization}, pages 345--356. Springer.

\bibitem[K{\"u}{\c{c}}{\"u}kyavuz et~al., 2020]{KSMW20}
K{\"u}{\c{c}}{\"u}kyavuz, S., Shojaie, A., Manzour, H., and Wei, L. (2020).
\newblock Consistent second-order conic integer programming for learning
  {Bayesian} networks.
\newblock {\em arXiv preprint arXiv:2005.14346}.

\bibitem[Manzour et~al., 2021]{MKS19}
Manzour, H., K{\"u}{\c{c}}{\"u}kyavuz, S., Wu, H.-H., and Shojaie, A. (2021).
\newblock Integer programming for learning directed acyclic graphs from
  continuous data.
\newblock {\em {INFORMS} Journal on Optimization}, 3(1):46--73.

\bibitem[Miller, 2002]{miller2002subset}
Miller, A. (2002).
\newblock {\em Subset selection in regression}.
\newblock Chapman and Hall/CRC.

\bibitem[Modaresi et~al., 2016]{modaresi2016intersection}
Modaresi, S., K{\i}l{\i}n{\c{c}}, M.~R., and Vielma, J.~P. (2016).
\newblock Intersection cuts for nonlinear integer programming: Convexification
  techniques for structured sets.
\newblock {\em Mathematical Programming}, 155(1-2):575--611.

\bibitem[Natarajan, 1995]{Natarajan1995}
Natarajan, B.~K. (1995).
\newblock Sparse approximate solutions to linear systems.
\newblock {\em SIAM Journal on Computing}, 24(2):227--234.

\bibitem[Pilanci et~al., 2015]{pilanci2015sparse}
Pilanci, P., Wainwright, M.~J., and El~Ghaoui, L. (2015).
\newblock Sparse learning via boolean relaxations.
\newblock {\em Mathematical Programming}, 151:63--87.

\bibitem[Richard and Tawarmalani, 2010]{richard2010lifting}
Richard, J.-P.~P. and Tawarmalani, M. (2010).
\newblock Lifting inequalities: a framework for generating strong cuts for
  nonlinear programs.
\newblock {\em Mathematical Programming}, 121(1):61--104.

\bibitem[Sato et~al., 2016]{sato2016feature}
Sato, T., Takano, Y., Miyashiro, R., and Yoshise, A. (2016).
\newblock Feature subset selection for logistic regression via mixed integer
  optimization.
\newblock {\em Computational Optimization and Applications}, 64(3):865--880.

\bibitem[Stubbs and Mehrotra, 1999]{Stubbs1999}
Stubbs, R.~A. and Mehrotra, S. (1999).
\newblock A branch-and-cut method for 0-1 mixed convex programming.
\newblock {\em Mathematical Programming}, 86(3):515--532.

\bibitem[Tibshirani, 1996]{tibshirani1996regression}
Tibshirani, R. (1996).
\newblock Regression shrinkage and selection via the lasso.
\newblock {\em Journal of the Royal Statistical Society: Series B
  (Methodological)}, pages 267--288.

\bibitem[Vielma, 2019]{vielma2019small}
Vielma, J.~P. (2019).
\newblock Small and strong formulations for unions of convex sets from the
  {Cayley} embedding.
\newblock {\em Mathematical Programming}, 177(1-2):21--53.

\bibitem[Wang and K{\i}l{\i}n{\c{c}}-Karzan, 2020a]{wang2019generalized}
Wang, A.~L. and K{\i}l{\i}n{\c{c}}-Karzan, F. (2020a).
\newblock The generalized trust region subproblem: solution complexity and
  convex hull results.
\newblock {\em Forthcoming in Mathematical Programming}.

\bibitem[Wang and K{\i}l{\i}n{\c{c}}-Karzan, 2020b]{wang2019tightness}
Wang, A.~L. and K{\i}l{\i}n{\c{c}}-Karzan, F. (2020b).
\newblock On convex hulls of epigraphs of {QCQPs}.
\newblock In Bienstock, D. and Zambelli, G., editors, {\em Integer Programming
  and Combinatorial Optimization}, pages 419--432, Cham. Springer International
  Publishing.

\bibitem[Wang and K{\i}l{\i}n\c{c}-Karzan, 2021]{wang2021tightness}
Wang, A.~L. and K{\i}l{\i}n\c{c}-Karzan, F. (2021).
\newblock On the tightness of {SDP} relaxations of {QCQP}s.
\newblock {\em Forthcoming in Mathematical Programming}.

\bibitem[Wei et~al., 2020]{WGK20}
Wei, L., G{\'o}mez, A., and K{\"u}{\c{c}}{\"u}kyavuz, S. (2020).
\newblock On the convexification of constrained quadratic optimization problems
  with indicator variables.
\newblock In Bienstock, D. and Zambelli, G., editors, {\em Integer Programming
  and Combinatorial Optimization}, pages 433--447, Cham. Springer International
  Publishing.

\bibitem[Wu et~al., 2017]{wu2017quadratic}
Wu, B., Sun, X., Li, D., and Zheng, X. (2017).
\newblock Quadratic convex reformulations for semicontinuous quadratic
  programming.
\newblock {\em SIAM Journal on Optimization}, 27(3):1531--1553.

\bibitem[Xie and Deng, 2020]{xie2018ccp}
Xie, W. and Deng, X. (2020).
\newblock Scalable algorithms for the sparse ridge regression.
\newblock {\em SIAM Journal on Optimization}, 30(4):3359--3386.

\bibitem[Zhang, 2010]{zhang2010nearly}
Zhang, C.-H. (2010).
\newblock Nearly unbiased variable selection under minimax concave penalty.
\newblock {\em The Annals of Statistics}, 38:894--942.

\bibitem[Zheng et~al., 2014]{zheng2014improving}
Zheng, X., Sun, X., and Li, D. (2014).
\newblock Improving the performance of {MIQP} solvers for quadratic programs
  with cardinality and minimum threshold constraints: A semidefinite program
  approach.
\newblock {\em INFORMS Journal on Computing}, 26(4):690--703.

\end{thebibliography}

\revised{
\appendix

\section{The special case when $\conv(Q)$ is compact} \label{sec:app}
In this section, we give an extended formulation of $\clconv(Z_Q)$ based on an extended formulation of $\conv(Q_0)$. In particular, this alternative formulation is more favorable in cases when the number of facets of $\conv(Q)$ is polynomially bounded while $\conv(Q_0)$ has an exponential number of facets. We denote the facets of $\conv(Q)$ which do not contain zero by $\{F_\ell \}_{1 \leq \ell \leq k}$, and we write each  $F_\ell $ as $F_\ell  := \{z  \; | \; A_\ell  z  \leq b_\ell\}$. \citet{forbidden2015}  prove that $\conv(Q_0) = \conv\left( \bigcup_{1 \leq \ell \leq k} F_\ell  \right)$, and a natural extended formulation of $\conv(Q_0)$ is as follows:
\begin{subequations} \label{eq:ExtendedQ0}
\begin{align}
	&z =  \sum_{\ell \in [k]}  \hat z_\ell \label{eq:ExtendedQ0-1} \\
	&A_\ell  \hat z_\ell  \leq \lambda_\ell  b_\ell & \ell\in[k] \label{eq:ExtendedQ0-2} \\
	&\sum_{\ell\in[k]} \lambda_\ell  = 1, \; \lambda \ge \mathbf 0. \label{eq:ExtendedQ0-3}
\end{align}	
\end{subequations}

\begin{theorem}\label{thm:extended}
	\begin{align*}
	\clconv(Z_Q) = \proj_{(z,\beta,t)} \Big\{(z,\hat z, \lambda, \beta,t) \in \R_+^{(k+1)p + k}& \times \R^{p}\times \R \; | \; \eqref{eq:ExtendedQ0-1}\--\eqref{eq:ExtendedQ0-2},\;
	\sum_{\ell \in [k]}   \lambda_\ell   \leq 1,
	\nonumber\\
&t \geq f(\mathbf 1^\top  \beta),\;t \geq (\mathbf 1^\top \lambda)f\left(\frac{\mathbf 1^\top  \beta}{\mathbf 1^\top \lambda}\right)\Big\}.		
	\end{align*}
\end{theorem}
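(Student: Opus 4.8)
The plan is to reduce the statement to Theorem~\ref{theo:hullConnected}. Write $\hat Y$ for the description of $\clconv(Z_Q)$ furnished by Theorem~\ref{theo:hullConnected} in terms of a finite set $\mathcal F$ satisfying \eqref{eq:Q0}, and write $Y$ for the projection on the right-hand side above. Both sets are convex, so it suffices to show $Y=\hat Y$, and the whole argument hinges on identifying, for a fixed $z\in\conv(Q)$, the largest value of $\mathbf 1^\top\lambda$ compatible with \eqref{eq:ExtendedQ0-1}--\eqref{eq:ExtendedQ0-2} together with $\mathbf 1^\top\lambda\le 1$.

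First I would prove the key identity
\[
\alpha^*(z):=\max\Big\{\mathbf 1^\top\lambda : z=\textstyle\sum_{\ell}\hat z_\ell,\ A_\ell\hat z_\ell\le\lambda_\ell b_\ell\ \forall\ell,\ \lambda\ge\mathbf 0,\ \mathbf 1^\top\lambda\le 1\Big\}=\min\Big\{1,\min_{\pi\in\mathcal F}\pi^\top z\Big\}.
\]
Using the \citet{forbidden2015} formulation \eqref{eq:ExtendedQ0}, I would first observe that a point with $\mathbf 1^\top\lambda=\alpha\in(0,1]$ is feasible exactly when $z/\alpha\in\conv(Q^0)$ (rescale $\hat z_\ell\mapsto\hat z_\ell/\alpha$, $\lambda_\ell\mapsto\lambda_\ell/\alpha$ to recover $\sum_\ell\lambda_\ell=1$), so $\alpha^*(z)=\max\{\alpha\in[0,1]:z/\alpha\in\conv(Q^0)\}$. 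By Proposition~\ref{prop:F}, $\conv(Q^0)=\conv(Q)\cap\{\pi^\top z\ge 1,\ \forall\pi\in\mathcal F\}$, so $z/\alpha\in\conv(Q^0)$ forces $\alpha\le\pi^\top z$ for every $\pi$ and $\alpha\le 1$, giving $\alpha^*(z)\le\min\{1,\min_\pi\pi^\top z\}$. For the reverse inequality I would invoke Lemma~\ref{lemma:Z0}, which says $\conv(Q)=\{sv:s\in[0,1],\ v\in\conv(Q^0)\}$: writing $z=\alpha^*(z)\,v^*$ with $v^*\in\conv(Q^0)$ and $m=\min_\pi\pi^\top z$, the point $z/\min\{1,m\}=(\alpha^*(z)/\min\{1,m\})\,v^*$ is again of the form $sv^*$ with $s\le 1$, hence lies in $\conv(Q)$, while $\pi^\top(z/\min\{1,m\})\ge m/\min\{1,m\}\ge 1$; thus $z/\min\{1,m\}\in\conv(Q^0)$ and $\alpha^*(z)\ge\min\{1,m\}$. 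The degenerate case $z=\mathbf 0$ forces $\lambda=\mathbf 0$ (each $b_\ell$ has a negative entry because $\mathbf 0\notin F_\ell$), consistent with $\min\{1,\min_\pi\pi^\top\mathbf 0\}=0$.

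With this identity in hand, both inclusions follow from Lemma~\ref{lemma:perspective} (monotonicity of the perspective in its scale argument). For $Y\subseteq\hat Y$: any feasible $(\hat z,\lambda)$ has $\mathbf 1^\top\lambda\le\alpha^*(z)\le\pi^\top z$, so Lemma~\ref{lemma:perspective} gives $(\mathbf 1^\top\lambda)f(\mathbf 1^\top\beta/(\mathbf 1^\top\lambda))\ge(\pi^\top z)f(\mathbf 1^\top\beta/(\pi^\top z))$ for every $\pi\in\mathcal F$, whence $t\ge(\pi^\top z)f(\mathbf 1^\top\beta/(\pi^\top z))$, and $z\in\conv(Q)$ follows from the same rescaling. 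For $\hat Y\subseteq Y$: I would choose the decomposition attaining $\mathbf 1^\top\lambda=\alpha^*(z)=\min\{1,m\}$; taking $\pi$ achieving $m$, the constraint $t\ge m\,f(\mathbf 1^\top\beta/m)$ of $\hat Y$ together with $t\ge f(\mathbf 1^\top\beta)$ yields $t\ge\alpha^*(z)f(\mathbf 1^\top\beta/\alpha^*(z))$ in both cases $m\le 1$ and $m>1$, so the constructed point is feasible for $Y$.

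I expect the main obstacle to be the identity $\alpha^*(z)=\min\{1,\min_\pi\pi^\top z\}$, and in particular the $\conv(Q)$-membership of the scaled point $z/\min\{1,m\}$; the clean resolution is to route it through the geometric description $\conv(Q)=\{sv:s\in[0,1],\ v\in\conv(Q^0)\}$ of Lemma~\ref{lemma:Z0} rather than arguing facet-by-facet about $\conv(Q)$. A secondary technical point is the treatment of the perspective conventions at $\mathbf 1^\top\lambda=0$ (and $\pi^\top z=0$), which I would dispatch by the same continuity/limit argument used in the proof of Theorem~\ref{theo:hullConnected}.
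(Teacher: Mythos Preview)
Your proposal is correct and follows essentially the same route as the paper: both directions are reduced to Theorem~\ref{theo:hullConnected} via Lemma~\ref{lemma:perspective}, with the forward inclusion using $\pi^\top z\ge \mathbf 1^\top\lambda$ (since $\hat z_\ell/\lambda_\ell\in F_\ell\subseteq\conv(Q^0)$) and the reverse inclusion using Lemma~\ref{lemma:Z0} to produce a decomposition with $\mathbf 1^\top\lambda=\min\{1,\min_{\pi\in\mathcal F}\pi^\top z\}$. Your explicit isolation of the identity $\alpha^*(z)=\min\{1,\min_\pi\pi^\top z\}$ is a clean organizational choice the paper leaves implicit; one small fix is that in the degenerate case the conclusion $\hat z_\ell=\mathbf 0$ when $\lambda_\ell=0$ should be argued via boundedness of $F_\ell$ (trivial recession cone of $\{w:A_\ell w\le 0\}$) rather than merely ``$b_\ell$ has a negative entry.''
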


\begin{proof}
Let 
\begin{align*}
Z =\Big\{(z,\hat z, \lambda, \beta,t) \in \R_+^{(k+1)p + k} \times \R^{p}\times \R \; | \; & \eqref{eq:ExtendedQ0-1}\--\eqref{eq:ExtendedQ0-2},\;
		\sum_{\ell \in [k]}   \lambda_\ell   \leq 1, 
	t \geq f(\mathbf 1^\top  \beta), \\
	&t \geq (\mathbf 1^\top \lambda)f\left(\frac{\mathbf 1^\top  \beta}{\mathbf 1^\top \lambda}\right)\Big\}.	
\end{align*}	

First we show that $\proj_{(z,\beta,t)}(Z) \subseteq \clconv(Z_Q)$. Given any $(z,\hat z, \lambda, \beta,t)\in Z$, note that constraints  $z\in \conv(Q)$ and $t \geq f(\mathbf 1^\top \beta)$ defining  $ \clconv(Z_Q)$ are trivially satisfied. It remains to show that  
$t \geq (\pi^\top z)f\left(\frac{\mathbf 1^\top  \beta}{\pi^\top z}\right),\; \; \forall \pi \in \mathcal F$. For each $\pi \in \mathcal F$, we have
\begin{align*}
	\pi^\top z = \sum_{\ell\in[k]}   \pi^\top \hat z_\ell  = \sum_{\ell\in[k]}   \lambda_\ell  \pi^\top \left(\frac{\hat z_\ell }{\lambda_\ell }\right) \geq \sum_{\ell \in [k]}   \lambda_\ell,
\end{align*}
where the inequality follows from the fact that we must have either $\lambda_\ell  = 0$ and $\hat z_\ell  = \mathbf 0$ or $\lambda_\ell  > 0$ and $\frac{\hat z_\ell }{\lambda_\ell } \in F_\ell $ since each $F_\ell $ is a polytope contained in the half-space defined by inequality $\mathbf{1}^\top z\geq 1$. Thus,
from Lemma~\ref{lemma:perspective}, we have $t \geq (\mathbf 1^\top \lambda)f\left(\frac{\mathbf 1^\top  \beta}{\mathbf 1^\top \lambda}\right)\ge  (\pi^\top z)f\left(\frac{\mathbf 1^\top  \beta}{\pi^\top z}\right),\; \; \forall \pi \in \mathcal F$, hence  $\proj_{(z,\beta,t)}(Z) \subseteq \clconv(Z_Q)$.

Now, it remains to prove that $\clconv(Z_Q) \subseteq \proj_{(z,\beta,t)}(Z)$. 
For any $(z , \beta, t) \in \clconv(Z_Q)$ if $z \in \conv(Q_0)$, then there exist $\hat z_\ell $ and $\lambda_\ell $  that satisfy \eqref{eq:ExtendedQ0}
and $\mathbf 1^\top \lambda= 1$. Since $t \geq f(\mathbf 1^\top  \beta)$ for all $(z , \beta, t) \in \clconv(Z_Q)$, $(z, \beta, t) \in \proj_{(z,\beta,t)}(Z)$. If $z \in \conv(Q)\backslash \conv(Q_0)$, then, from Lemma \ref{lemma:Z0}, we can write $z$ as $z = \lambda_0 z_0$, $0 \leq \lambda_0 < 1$, and we may assume $z_0$ is on one of the facets of $\conv(Q_0)$ defined by $\hat \pi^\top z_0 = 1$ for some $\hat \pi \in \mathcal F$. By definition, $\forall \pi \in \mathcal F \; \;$ $\pi^\top z_0 \geq \hat \pi^\top z_0 = 1$ which implies $\lambda_0 = \hat \pi^\top z = \min_{\pi \in \mathcal F} \pi^\top z$. Since $z_0 \in \conv(Q_0)$, there exists $\hat z_\ell , \lambda_\ell $ such that $z_0 = \sum_{\ell\in[k]} \hat z_\ell $ and \eqref{eq:ExtendedQ0-2}--\eqref{eq:ExtendedQ0-3} hold.
Then 
\begin{align*}
	z = &\sum_{\ell \in [k]}  (\lambda_0 \hat z_\ell ) \\
	&A_\ell  (\lambda_0 \hat z_\ell ) \leq \lambda_0\lambda_\ell  b_\ell, & \ell \in [k] \\
	&\sum_{\ell \in [k]}  \lambda_0 \lambda_\ell  \leq 1, \lambda  \geq \mathbf 0,
\end{align*}
and we have $\sum_{\ell \in [k]}   \lambda_0 \lambda_\ell  = \lambda_0  = \min_{\pi \in \mathcal F} \pi^\top z$. Using Lemma~\ref{lemma:perspective}, we find that $t \geq (\pi^\top z)f\left(\frac{\mathbf 1^\top  \beta}{\pi^\top z}\right),\; \; \forall \pi \in \mathcal F$  implies that $t \geq (\sum_{\ell \in [k]}   \lambda_0 \lambda_\ell) f\left(\frac{\mathbf 1^\top  \beta}{\sum_{\ell \in [k]}   \lambda_0 \lambda_\ell}\right)\geq (\sum_{\ell \in [k]}    \lambda_\ell) f\left(\frac{\mathbf 1^\top  \beta}{\sum_{\ell \in [k]}  \lambda_\ell}\right)$. Hence, $(z, \beta, t) \in \proj_{(z,\beta,t)}(Z)$.

\end{proof}

}

\end{document}